\newcommand{\cdf}[1]{{#1}}
\DeclarePairedDelimiter\abs{\lvert}{\rvert}%
\DeclarePairedDelimiter\norm{\lVert}{\rVert}%
\newcommand{\ZZZ}{\mathcal{Z}}
\newcommand{\SSS}{\mathcal{S}}
\newcommand{\KSS}{K_{S_1 S_2}}
\newcommand{\iii}{\mathcal{I}}
\newcommand{\nys}{Nystr{\"o}m }
\newcommand{\alphahat}{\hat{\alpha}}
\newcommand{\betahat}{\hat{\beta}}
\newcommand{\dist}[1]{\text{dist}( #1 )}
\let\oldabs\abs
\def\abs{\@ifstar{\oldabs}{\oldabs*}}
\let\oldnorm\norm
\def\norm{\@ifstar{\oldnorm}{\oldnorm*}}
\newtheorem{definition}{Definition}[section]
\newtheorem{theorem}{Theorem}[section]
\newtheorem{lemma}{Lemma}[section]
\newtheorem{proposition}{Proposition}[section]  
\newtheorem{corollary}{Corollary}[section]
\title{Data-Driven Linear Complexity Low-Rank Approximation of General Kernel Matrices: A Geometric Approach} 
\author{Difeng Cai\thanks{Department of Mathematics, Emory University, Atlanta, GA 30322}
\and Edmond Chow \thanks{School of Computational Science and Engineering, Georgia Institute of Technology, Atlanta, GA 30332}
\and Yuanzhe Xi$^{\ast}$}
\date{}
\begin{document}
\maketitle

\abstract{A general, {\em rectangular} kernel matrix may be defined as $K_{ij} = \kappa(x_i,y_j)$ where $\kappa(x,y)$ is a kernel function and where
$X=\{x_i\}_{i=1}^m$ and $Y=\{y_i\}_{i=1}^n$ are two sets of points.
In this paper, we seek a low-rank approximation to a
kernel matrix where the sets of points $X$ and $Y$ are large and are \cdf{arbitrarily distributed, such as away from each other, ``intermingled'', identical, etc.}
Such rectangular kernel matrices may arise, for example, in Gaussian
process regression where $X$ corresponds to the training data and
$Y$ corresponds to the test data.  In this case, the points are often
high-dimensional.  
Since the point sets are large, we must exploit the fact that the matrix
arises from a kernel function, and avoid forming the matrix, and thus
ruling out most algebraic techniques.  In particular, we seek methods
that can scale \cdf{linearly or nearly linearly with respect to the size of data for a fixed approximation rank}.  The main idea in this paper is to
{\em geometrically} select appropriate subsets of points to construct
a low rank approximation.  An analysis in this paper guides how this
selection should be performed.
}



\section{Introduction} 
\label{sec:intro}

Given a function $\kappa(x,y)$ and two sets of points
$X=\{x_i\}_{i=1}^m$ and $Y=\{y_i\}_{i=1}^n$, the $m$-by-$n$
matrix with entries
\begin{equation}
\label{eq:K}
   K_{ij} = \kappa(x_i,y_j),\quad x_i\in X, \;\; y_j\in Y
\end{equation}
and denoted by $K_{XY}$ is called a {\em kernel matrix} 
and $\kappa(x,y)$ is called a kernel function. 
Kernel matrices associated with various kernel functions arise in diverse computations such as those involving integral equations \cite{kress2013linear,hsiaowendlandbook,atkinson1967eig,rokhlinpotential,eigCMAM},
$N$-body simulations \cite{BarnesHut,GREENGARD1997280}, 
Gaussian processes
\cite{bishopbook,edmondgaussian2014},
and others \cite{vapnikbook,hack2015book,huan2018,huan2019,huan2020fast}.

One frequently encounters the problem of finding a low-rank factorization, exactly or approximately, of a kernel matrix. 
We first note that algebraic techniques
such as the singular value decomposition and some pseudoskeleton \cite{Tyrtyshnikov1996,maxvol2001,skeleton2011,kressner2020}
and CUR decompositions \cite{Mahoney697,guCUR2015} do not take advantage of the fact that a matrix is a \emph{kernel} matrix.
We further note that when the kernel function $\kappa(x,y)$ is smooth (but possibly singular at $x=y$) and the datasets $X,Y$ are well-separated, then
the corresponding kernel matrix $K_{XY}$ generally has low numerical rank and there exists a variety of efficient methods 
for finding the low-rank approximation (e.g., degenerate approximations of the kernel function \cite{rokhlinpotential,GREENGARD1997280,hack1989,anderson92,xiaobaiFMM,hackintroh2app,smash}
and proxy point methods \cite{Gillman2012,xing2020interpolative}).

In this paper, we seek a low-rank approximation to a kernel matrix where the sets of points $X$ and $Y$ are large and 
are \cdf{arbitrarily distributed, such as away from each other, ``intermingled'', identical, etc.}
Since the point sets are large, we must exploit the fact that the matrix arises from a kernel function, and avoid forming the matrix, and thus ruling out most algebraic techniques.
In particular, we seek methods \cdf{that can scale linearly or nearly linearly for a fixed rank}.
Such kernel matrices arise, for example, in Gaussian process regression where $X$ corresponds to the training data and $Y$ corresponds to the test data.
In this case, the points are often high-dimensional, which also rules out the use of any existing methods (e.g., degenerate approximations and proxy point methods) that are limited by the curse of dimensionality.

An existing method called adaptive cross approximation (ACA) \cite{bebendorf2000,bebendorf2003ACA}
is often suitable for our case.
ACA scales linearly with the number of points.
ACA corresponds to a pivoted partial LU factorization and only needs to compute matrix elements used in the
partial factorization. However, ACA may fail in some circumstances since it does not perform full pivoting \cite{HCA2005,darve2019}. We will numerically compare
our proposed method to ACA later in this paper.

The main idea in this paper is to {\em geometrically} select a subset of points
$S_1$ in $X$ and/or a subset of points $S_2$ in $Y$ to construct a low
rank approximation.  An analysis in this paper guides how this selection
should be performed. 

We analyze the use of these subsets of points to construct two forms of low-rank factorizations. The first is a two-sided form:
\begin{equation}
 K_{XY}\approx K_{XS_2}\KSS^+K_{S_1 Y},\quad S_1\subseteq X,\;\; S_2\subseteq Y,
 \label{eq:2sided}
\end{equation}
where $\KSS^+$ denotes the pseudoinverse of $K_{S_1 S_2}$.
This form is a CUR decomposition, except that we will treat $K_{XY}$ as a kernel matrix. 
Note that this form is similar to that of a
\nys factorization, except that a \nys factorization \cite{nys2001} expects the kernel matrix to be symmetric, with $Y=X$, since eigenvalues of the kernel matrix are implicitly being approximated in the \nys factorization.
The matrix $K_{XY}$ in \eqref{eq:2sided} is rectangular in general.

The second form of low-rank factorization that we study is the one-sided form of the interpolative decomposition \cite{ID2005} :
\begin{equation}
K_{XY}\approx UK_{\iii Y},\quad U=P \begin{bmatrix} I \\ G \end{bmatrix},
\label{eq:1sided}
\end{equation}
where $\iii\subseteq X$, $P$ is a permutation matrix, $I$ is an identity matrix and $G$ is a general dense matrix.
This form can be computed algebraically using the strong rank-revealing QR factorization \cite{rrqr96} \cdf{with the property that the $\max$-norm of $G$ is bounded by a prescribed constant larger than 1. However, this algebraic factorization requires the entire matrix $K_{XY}$ to be formed explicitly.}

Instead, it is common to algebraically compute the interpolative decomposition of the smaller matrix
\begin{equation}
K_{X S_2} \approx U K_{\iii S_2},
\quad U=P \begin{bmatrix} I \\ G \end{bmatrix},
\label{eq:1sided-intermediate}
\end{equation}
where $S_2 \subseteq Y$ or $S_2$ is an entirely different set of points altogether,
and then use $U$ and $\iii$ computed in (\ref{eq:1sided-intermediate}) for the approximation (\ref{eq:1sided}).
Examples of this approach can be found in \cite{HCA2005,Gillman2012,darve2019}.
In these approaches, the choice of $S_2$ is made analytically (e.g., Chebyshev points \cite{HCA2005,darve2019} or proxy surface points \cite{Gillman2012}) or algebraically (e.g., ACA) \cite{HCA2005}.
In this paper, for the one-sided approximation (\ref{eq:1sided}), we will analyze a {\em geometric} choice of the subset $S_2$.
After $S_2$ is chosen, the subset $\iii$ is selected by the algebraic interpolative decomposition via strong rank-revealing QR factorization.

\cdf{Low-rank methods based on subset selection are useful in improving the scalability of Gaussian process, often under the name of ``sparse Gaussian process''(cf. \cite{smola2000SGP,seeger2002SGP,snelson2005SGP}), where ``sparse'' refers to the fact that the selected subsets, for example, $S_1$, $S_2$ in \eqref{eq:2sided}, are much smaller than (thus sparsely distributed in) the original data sets.
Thus one application of the paper is the design of scalable Gaussian process.
}

This paper will show that the low-rank approximation
error in the maximum norm depends on the quantities
$\delta_{X,S_1}$ and/or
$\delta_{Y,S_2}$, where
\[
\delta_{Z,S}:=\max\limits_{x\in Z}\dist{x,S}
\]
measures the closeness between $Z$ and $S$.  In order for $\delta_{X,S_1}$
(or $\delta_{Y,S_2}$) to be small, points in $S_1$ (or $S_2$) should be
close to as many points in $X$ (or $Y$) as possible.  This implies that
selecting sample points that are evenly distributed over the entire
dataset can yield better approximations than, for example, choosing
clustered points in small regions that fail to capture the geometry of
the entire dataset.
A similar geometric selection can be used in a
version of skeletonized interpolation \cite{darveEndo} but has only
been studied in the case of well-separated sets of points.

Several known methods can be used to select $O(1)$ sample points that are evenly distributed over a dataset with a complexity that scales \emph{linearly} with the size of the dataset.
For example, {\em farthest point sampling} (FPS) \cite{FPS97} constructs a subset $S$ of $X$ by first initializing $S$ with one point and then sequentially adding the point in $X$ not in $S$ that is farthest from the current points in $S$.  
The complexity for selecting $r$ samples from $n$ points in $\mathbb{R}^d$ is $O(dr^2 n)$.  
FPS produces highly evenly distributed samples and is often used in mesh generation \cite{FPS06}, computer graphics \cite{FPS11}, etc., but primarily where the data are at most three dimensional.  
It has not previously been used for the low-rank compression of matrices or applied to high dimensional datasets.
Computationally, for high dimensional datasets, FPS can be potentially slow in practice due to its sequential nature.
One can combine FPS with uniform random sampling for faster speed, for example, by generating approximately 20\% of samples using FPS and 80\% using uniform random sampling.
As will be shown in Section \ref{sub:high}, the resulting \emph{mixed method} tends to yield an approximation that is less accurate than FPS and more accurate than random sampling.

Another method for selecting evenly distributed sample points is the {\em anchor net method} \cite{anchornet}.
This method was proposed for the efficient generation of landmark points for \nys approximations such that the resulting approximation is accurate and numerically stable.  
It leverages discrepancy theory to generate evenly-spaced samples and was shown in \cite{anchornet} to achieve better accuracy and robustness than uniform random sampling and $k$-means clustering for low-rank approximations.
The anchor net method has the optimal complexity $O(drn)$ for selecting $r$ points from $n$ points in $\mathbb{R}^d$ and is efficient for a wide range of problems from low to high dimensions.
However, the anchor net method has only been used for approximating symmetric kernel matrices and its performance for approximating general rectangular kernel matrices is as yet unknown.

Figure \ref{fig:pony} shows the $100$ samples obtained from FPS
and the anchor net method for a highly irregular dataset in two dimensions.
Results for uniform random sampling is also shown, which
does not generally produce a uniform distribution of points over the data.

\begin{figure}
\centering
\subfigure[Pony dataset]{
    \includegraphics[scale=0.23]{./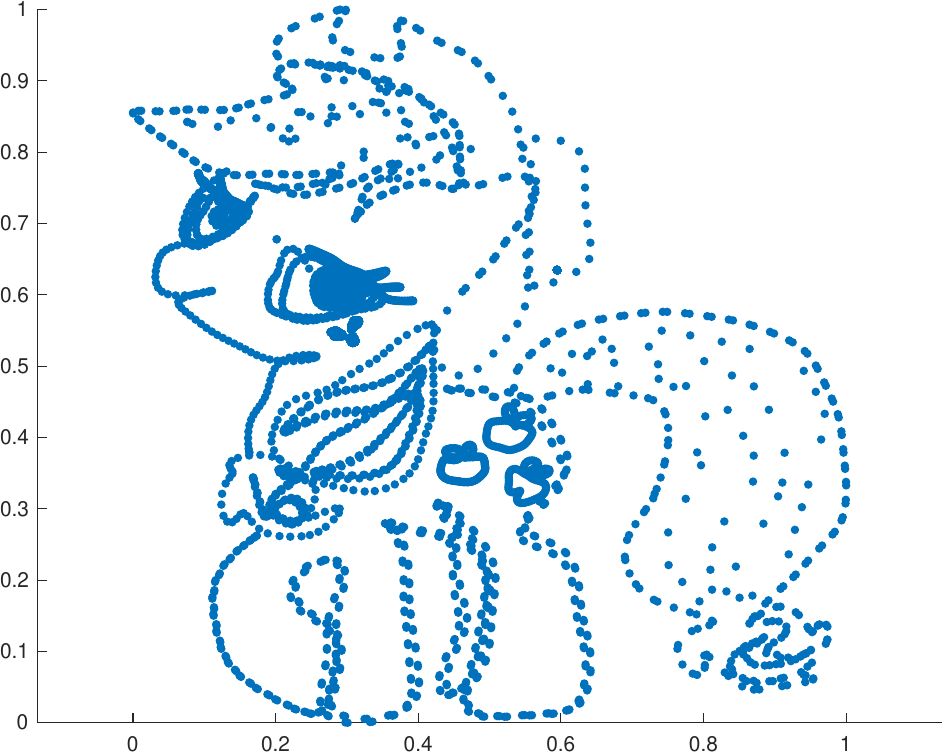}
}
\subfigure[Uniform random sampling]{
    \includegraphics[scale=0.23]{./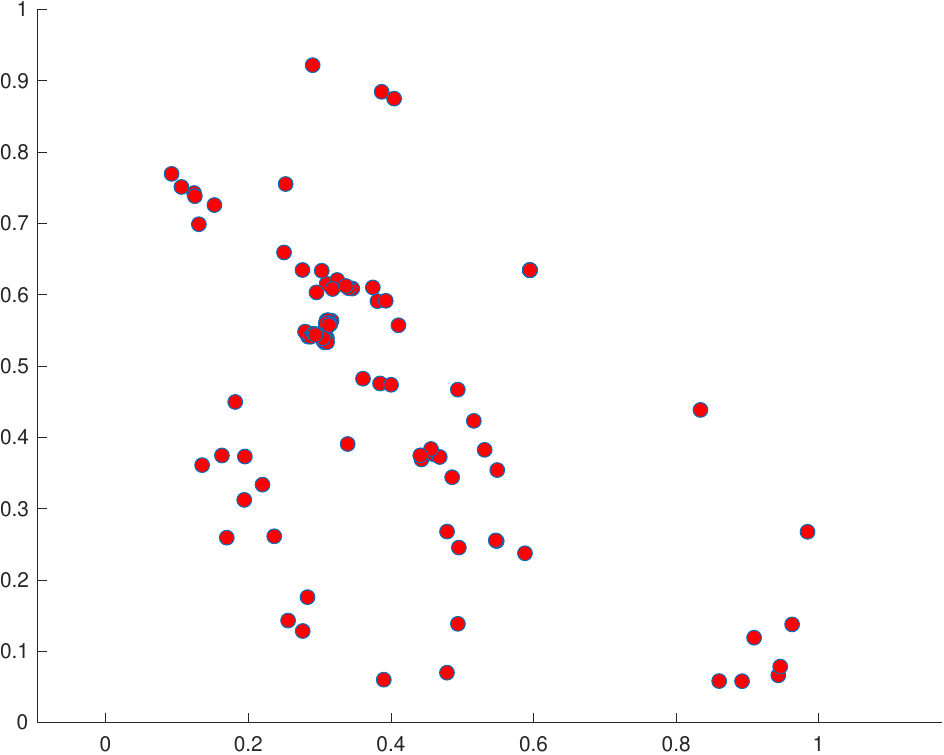}
}
\subfigure[Farthest point sampling]{
    \includegraphics[scale=0.23]{./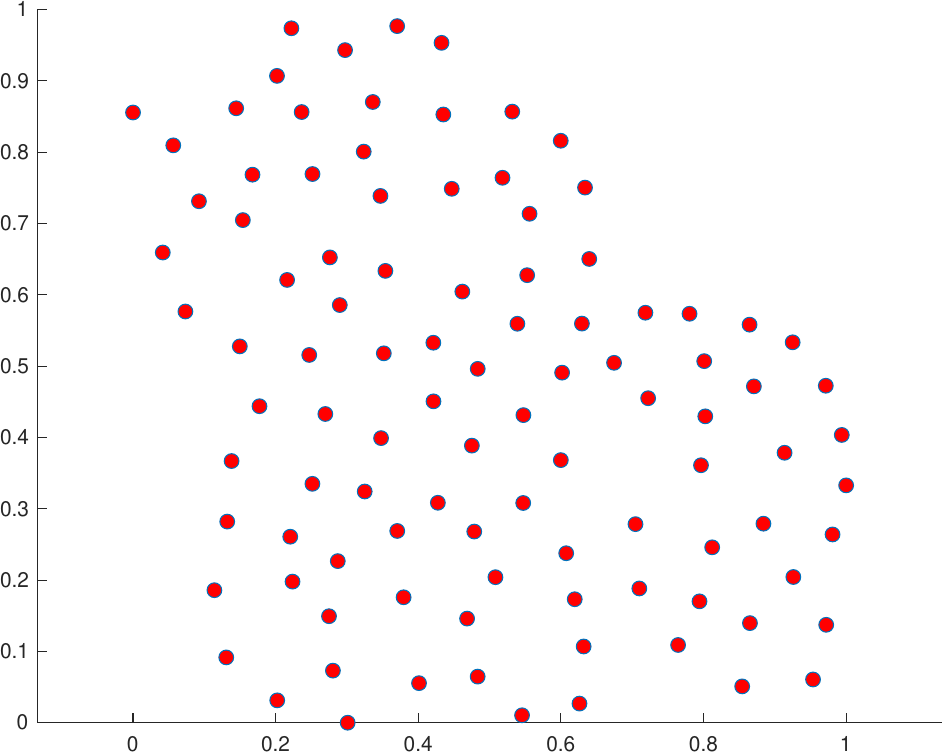}
}
\subfigure[Anchor net method]{
    \includegraphics[scale=0.23]{./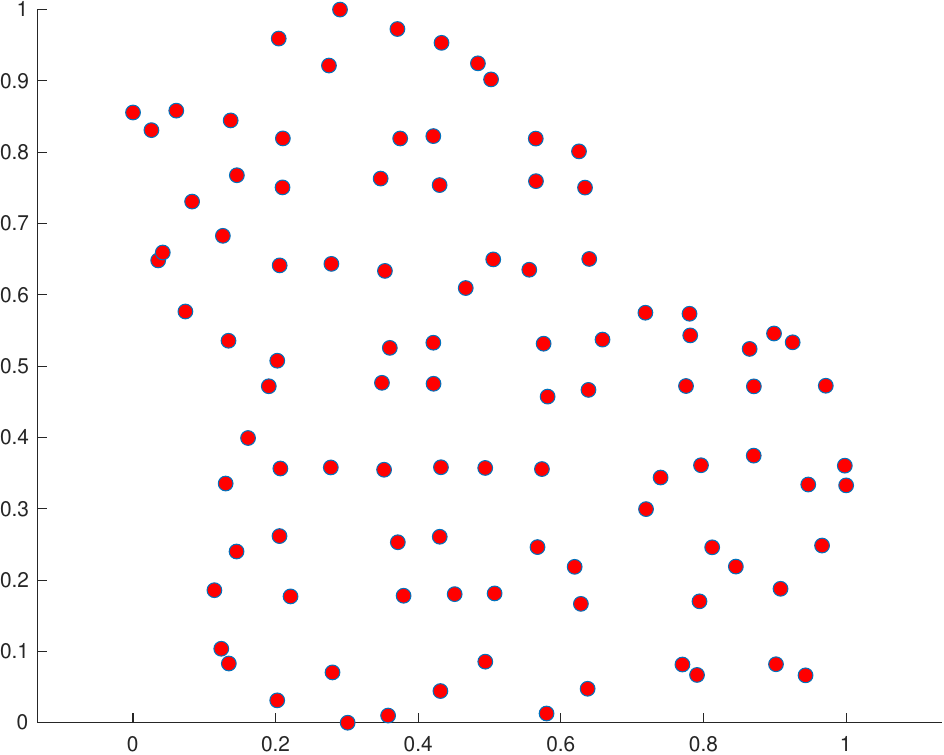}
}
\caption{Different geometric selection schemes for the Pony dataset.}
\label{fig:pony}
\end{figure}

In summary, we seek a linear \cdf{or nearly linear} complexity low-rank factorization approach for kernel matrices where the points may be intermingled and the points may be high-dimensional.
Some low-rank approximation techniques are matrix-based (e.g., ACA) and don't rely on knowing the specific kernel function or sets of points, except for assuming that the kernel function is smooth and gives rise to a kernel matrix $K_{XY}$ that is low rank.  Other techniques only need knowledge of the kernel function and bounding boxes for the sets of points, and do not depend on the points themselves when selecting the set $S_2$ in (\ref{eq:1sided-intermediate}), for example.  The method we propose is based on the sets $X$ and $Y$ and is independent of the kernel function.  
We thus call our method a {\em data-driven} method.
By choosing $S_2$ to be existing points rather than a new set of points that sample possibly high-dimensional space, the data-driven method is not limited by the curse of dimensionality.

Our proposed method relies on the geometric selection of the subsets $S_1 \subseteq X$ and/or $S_2 \subseteq Y$. 
We address the following questions:
(1) how does the data selection affect the low-rank approximation error?
(2) given two subsets with equal numbers of points, how can one tell which
one leads to a more accurate low-rank approximation?
(3) how can one perform the desired data selection efficiently?

The rest of the paper is organized as follows.
Section \ref{sec:two-sided} proposes the data-driven approach for efficiently computing the \emph{two-sided} factorization \eqref{eq:2sided}.  
Section \ref{sec:one-sided} similarly considers the \emph{one-sided} factorization \eqref{eq:1sided}.  
We will show that the one-sided factorization is more stable than the two-sided factorization.
The two-sided factorization, however, is slightly cheaper to compute than the one-sided factorization.
The results of numerical experiments are presented in Section \ref{sec:test}, and a conclusion given in Section \ref{sec:conclusion}.
Unless otherwise stated, all norms used in this paper are the 2-norm, denoted by $\norm{\cdot}$.
The Euclidean distance between $x,y\in \mathbb{R}^d$ is denoted by $|x-y|$.

\section{Two-sided low-rank kernel matrix approximation}
\label{sec:two-sided}
This section analyzes the data-driven geometric approach
for the two-sided low-rank approximation \eqref{eq:2sided}.
 
\subsection{Algorithm}
\label{sub:two-sided}
The two-sided factorization \eqref{eq:2sided} can be computed immediately once the subsets $S_1\subseteq X$ and $S_2\subseteq Y$ are determined.
The subsets can be computed in linear time with suitable geometric selection schemes. The full algorithm is given in Algorithm \ref{alg:fac2}. 
Depending on the specific geometric selection scheme, the total complexity of Algorithm \ref{alg:fac2} is $O(dr(m+n))$ for uniform random sampling and the anchor net method, or $O(dr^2(m+n))$ for farthest point sampling, where $r=\max(r_1,r_2)$ denotes the maximum sample size.
The choice of subsets has a strong impact on the low-rank approximation accuracy, robustness of the algorithm, as well as numerical stability, and thus the subset has to be chosen judiciously.
Theoretical guidance on geometric selection is provided in Section \ref{sub:errorfac2} via analyzing the approximation error of the two-sided factorization.
Experiments in Section \ref{sub:high} show that different geometric selections can yield dramatically different results for approximating the kernel matrix, with FPS and the anchor net method yielding the best results, which is consistent with our analysis.


\begin{algorithm}
    \caption{\it Data-driven two-sided compression of $K_{XY}$ with two sets of points $X, Y$}
    \label{alg:fac2}
    \emph{Input:} Datasets $X=\{x_{1},\dots,x_{m}\}$, $Y=\{y_1,\dots,y_n\}\subset \mathbb{R}^d$, kernel function $\kappa$, numbers of sample points $r_1,r_2$ for $X,Y$, respectively

    \emph{Output:} Approximation $K_{XY}\approx K_{XS_2}\KSS^+K_{S_1 Y}$ with $\operatorname{card}(S_1) = r_1$, $\operatorname{card}(S_2) = r_2$

        \begin{algorithmic}        
        \STATE Apply a linear complexity geometric selection algorithm to $X$ and $Y$ to generate $r_1$ and $r_2$ samples $S_1\subseteq X$ and $S_2\subseteq Y$, respectively
        \STATE Return $K_{XS_2},\KSS,K_{S_1 Y}$
    \end{algorithmic}
\end{algorithm}

\subsection{Error analysis for two-sided approximation}
\label{sub:errorfac2}
The goal of this section is to derive an error estimate of the two-sided approximation that is able to provide a straightforward geometric understanding of how the subsets $S_1, S_2$ affect the approximation accuracy. 
This analysis is independent of how the subsets $S_1, S_2$ are selected in Algorithm \ref{alg:fac2}.
To prepare for the derivation of the geometric estimates, in Section \ref{ssub:preparation}, we derive error bounds involving only submatrices of $K_{XY}$.
The geometric estimates are presented in Section \ref{ssub:Geometric}.

\subsubsection{Algebraic preparation} 
\label{ssub:preparation}
In order to estimate for the approximation error of \eqref{eq:2sided} for arbitrary subsets $S_1\subseteq X$ and $S_2\subseteq Y$, we first review one lemma from \cite[Lemma 3.1]{anchornet}, which is stated below.
\begin{lemma}
\label{lm:lm1}
    Assume $A$ is an $m$-by-$n$ matrix, $\alpha,\alphahat$ are $m$-by-1 vectors and $\beta,\betahat$ are $n$-by-1 vectors.
    Define $\epsilon_1(u):=\norm{\alphahat-\alpha}$ and $\epsilon_2:=\norm{\betahat-\beta}.$
    Then 
    \begin{equation}
    \label{eq:lm1}
        \abs{\alphahat^T A \betahat - \alpha^T A \beta} 
        \leq \norm{\alpha^T A}\cdot \epsilon_2 + \norm{A\beta}\cdot \epsilon_1(u)
        + \norm{A}\cdot \epsilon_1(u)\epsilon_2.
    \end{equation}
\end{lemma}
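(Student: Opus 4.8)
The plan is to prove Lemma~\ref{lm:lm1} by a telescoping decomposition of the difference $\alphahat^T A \betahat - \alpha^T A \beta$, inserting and subtracting intermediate terms so that at each step only one of the two vectors is perturbed. Specifically, I would write
\[
\alphahat^T A \betahat - \alpha^T A \beta
= \alpha^T A (\betahat - \beta) + (\alphahat-\alpha)^T A \beta + (\alphahat-\alpha)^T A (\betahat-\beta),
\]
which one checks by expanding the right-hand side: the three cross terms involving $\alpha^T A \betahat$, $\alpha^T A \beta$ (twice, with opposite signs), $\alphahat^T A \beta$, etc., cancel appropriately and leave exactly $\alphahat^T A\betahat - \alpha^T A\beta$. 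This is the one small algebraic identity to verify carefully, and it is the crux of the argument.

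Next I would apply the triangle inequality to the three terms, followed by the Cauchy--Schwarz / submultiplicativity bound $|u^T v| \le \norm{u}\,\norm{v}$ and $\norm{A w}\le \norm{A}\,\norm{w}$ for each. The first term gives $|\alpha^T A(\betahat-\beta)| \le \norm{\alpha^T A}\cdot\norm{\betahat-\beta} = \norm{\alpha^T A}\cdot\epsilon_2$; the second term gives $|(\alphahat-\alpha)^T A\beta| \le \norm{\alphahat-\alpha}\cdot\norm{A\beta} = \norm{A\beta}\cdot\epsilon_1$; and the third term gives $|(\alphahat-\alpha)^T A(\betahat-\beta)| \le \norm{\alphahat-\alpha}\cdot\norm{A}\cdot\norm{\betahat-\beta} = \norm{A}\cdot\epsilon_1\epsilon_2$, using that the operator $2$-norm is consistent with the Euclidean vector norm. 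Summing these three bounds yields exactly \eqref{eq:lm1}.

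There is essentially no serious obstacle here: the only thing to be attentive to is the bookkeeping in the telescoping identity (making sure the signs and the leftover term are correct) and being explicit that $\norm{\cdot}$ denotes the $2$-norm throughout, so that the mixed quantities $\norm{\alpha^T A}$ and $\norm{A\beta}$ are Euclidean norms of vectors while $\norm{A}$ is the induced operator norm, and the inequalities $|u^Tv|\le\norm{u}\norm{v}$ and $\norm{Aw}\le\norm{A}\norm{w}$ apply. Since this lemma is quoted verbatim from \cite[Lemma 3.1]{anchornet}, I would keep the proof short, presenting the decomposition identity and the three term-by-term estimates, and then concluding.
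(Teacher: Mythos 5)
Your proof is correct: the telescoping identity $\alphahat^T A \betahat - \alpha^T A \beta = \alpha^T A(\betahat-\beta) + (\alphahat-\alpha)^T A\beta + (\alphahat-\alpha)^T A(\betahat-\beta)$ checks out, and the three Cauchy--Schwarz/operator-norm bounds give exactly \eqref{eq:lm1}. The paper itself gives no proof (the lemma is quoted from \cite[Lemma 3.1]{anchornet}), and your argument is the standard one, so there is nothing to reconcile.
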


In the next theorem, we  derive the estimate for the entrywise approximation error of \eqref{eq:2sided}  at an arbitrary pair of points $(x,y)$. 
This can be viewed as an error estimate for the ``\emph{algebraic}'' separable approximation to the kernel function $\kappa(x,y)$.
\begin{theorem}
\label{thm:kxyerror}
Consider finite sets $X, Y\subset \mathbb{R}^d$ and a kernel function $\kappa(x,y)$ defined on $X\times Y$.
For any non-empty subsets 
$S_1 \subseteq X$ and $S_2 \subseteq Y$,
the entrywise error of the approximation in \eqref{eq:2sided}  satisfies
\begin{equation}
\label{eq:kxyerror}
\abs{\kappa(x,y)-K_{xS_2}\KSS^+K_{S_1 y}}
    \leq 
\min_{\substack{u\in S_1\\ v\in S_2}}\left(\abs{\kappa(x,y) - \kappa(u,v)} + \epsilon_1(u)+\epsilon_2(v)+\norm{\KSS^{+}} \epsilon_1(u)\epsilon_2(v)\right),
\end{equation}
where  $\epsilon_1(u) = \norm{K_{x S_2}-K_{u S_2}}$ and $\epsilon_2(v) = \norm{K_{S_1 y}-K_{S_1 v}}$.
\end{theorem}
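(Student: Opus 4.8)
The plan is to recognize both the target quantity $K_{xS_2}\KSS^{+}K_{S_1 y}$ and the ``exact'' value $\kappa(u,v)$ as bilinear forms in the single matrix $M:=\KSS^{+}$, and then apply Lemma~\ref{lm:lm1}. The key algebraic fact is a reproducing-type identity. Fix any $u\in S_1$ and $v\in S_2$ and write $A:=\KSS$. Indexing the rows of $A$ by $S_1$ and the columns by $S_2$, the row of $A$ corresponding to $u$ is precisely $K_{uS_2}$ and the column corresponding to $v$ is precisely $K_{S_1 v}$; equivalently $K_{uS_2}=e_u^{T}A$ and $K_{S_1 v}=Ae_v$ for the appropriate standard basis vectors $e_u$, $e_v$. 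Hence, using the Moore--Penrose identity $AA^{+}A=A$,
\[
K_{uS_2}\,\KSS^{+}\,K_{S_1 v}=e_u^{T}AA^{+}Ae_v=e_u^{T}Ae_v=\kappa(u,v).
\]
Thus $\kappa(u,v)=\alpha^{T}M\beta$ with $\alpha^{T}:=K_{uS_2}$ and $\beta:=K_{S_1 v}$, whereas $K_{xS_2}\KSS^{+}K_{S_1y}=\alphahat^{T}M\betahat$ with $\alphahat^{T}:=K_{xS_2}$ and $\betahat:=K_{S_1 y}$; the perturbations are exactly $\norm{\alphahat-\alpha}=\epsilon_1$ and $\norm{\betahat-\beta}=\epsilon_2$ from the statement.

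Next I would invoke Lemma~\ref{lm:lm1} with $A\leftarrow M=\KSS^{+}$ and the vectors above, giving
\[
\bigl|K_{xS_2}\KSS^{+}K_{S_1y}-\kappa(u,v)\bigr|\le\norm{K_{uS_2}\KSS^{+}}\,\epsilon_2+\norm{\KSS^{+}K_{S_1 v}}\,\epsilon_1+\norm{\KSS^{+}}\,\epsilon_1\epsilon_2.
\]
To reduce the first two coefficients to $1$, observe that $K_{uS_2}\KSS^{+}=e_u^{T}(AA^{+})$ and $\KSS^{+}K_{S_1 v}=(A^{+}A)e_v$, and that $AA^{+}$ and $A^{+}A$ are orthogonal projectors and therefore have spectral norm at most $1$; by submultiplicativity $\norm{K_{uS_2}\KSS^{+}}\le\norm{e_u}\,\norm{AA^{+}}\le1$, and similarly $\norm{\KSS^{+}K_{S_1 v}}\le1$. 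Combining this with the triangle inequality $\abs{\kappa(x,y)-K_{xS_2}\KSS^{+}K_{S_1y}}\le\abs{\kappa(x,y)-\kappa(u,v)}+\bigl|K_{xS_2}\KSS^{+}K_{S_1y}-\kappa(u,v)\bigr|$ yields, for every fixed $u\in S_1$ and $v\in S_2$, the quantity inside the minimum in \eqref{eq:kxyerror}. Since $\epsilon_1$ and $\epsilon_2$ depend only on $u$ and on $v$, respectively, taking $\min_{u\in S_1,\,v\in S_2}$ of both sides completes the argument.

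The only nonroutine ingredient is the reproducing identity $K_{uS_2}\KSS^{+}K_{S_1 v}=\kappa(u,v)$ together with the projector norm bounds; once these are established, the rest is a direct substitution into Lemma~\ref{lm:lm1} followed by the triangle inequality and a minimization. No separate treatment of the degenerate case $\KSS=0$ is needed: then $AA^{+}=A^{+}A=0$, $\kappa(u,v)=0$ for all $u\in S_1$ and $v\in S_2$, and the displayed bound still holds.
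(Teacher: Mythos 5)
Your proposal is correct and follows essentially the same route as the paper: the reproducing identity $K_{uS_2}\KSS^{+}K_{S_1 v}=\kappa(u,v)$ via $\KSS\KSS^{+}\KSS=\KSS$, an application of Lemma~\ref{lm:lm1} with $A=\KSS^{+}$, the projector norm bound $\norm{\KSS\KSS^{+}},\norm{\KSS^{+}\KSS}\le 1$ to control the first two coefficients, and finally the triangle inequality and minimization over $u\in S_1$, $v\in S_2$. Your explicit remark on the degenerate case $\KSS=0$ is a small refinement (the paper asserts the projector norms equal $1$), but the argument is otherwise the same.
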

\begin{proof}
Since
    $\KSS \KSS^+ \KSS=\KSS$, we have $\forall u\in S_1, v\in S_2$
\begin{equation}
\label{eq:kppuv}
    \kappa(u,v)=K_{u S_2}\KSS^+ K_{S_1 v}.
\end{equation}
For any $x\in X,\, y\in Y,\, u\in S_1,\, v\in S_2$, define the column vectors
\begin{equation*}
    \alpha := K_{u S_2}^T,\quad 
    \alphahat:= K_{x S_2}^T,\quad
    \beta :=K_{S_1 v},\quad
    \betahat:=K_{S_1 y}.
\end{equation*}
Then it is easy to see that $\epsilon_1(u) = \Vert\alphahat-\alpha\Vert$ and $\epsilon_2(v) = \Vert\betahat-\beta\Vert$.
With \eqref{eq:kppuv}, we obtain 
\begin{equation}
\label{eq:diffrewrite}
   \kappa(x,y)-K_{xS_2}\KSS^+K_{S_1 y}  = \kappa(x,y) - \alphahat^T \KSS^{+} \betahat 
    = (\kappa(x,y) - \kappa(u,v)) +( \alpha^T \KSS^{+} \beta - \alphahat^T \KSS^{+} \betahat),
\end{equation}
for any $u\in S_1, v\in S_2$.
According to Lemma \ref{lm:lm1}, we get
\begin{equation}
\label{eq:term2}
\begin{aligned}
        \abs{\alphahat^T \KSS^{+} \betahat - \alpha^T \KSS^{+} \beta} 
        &\leq \norm{\alpha^T \KSS^{+}} \epsilon_2(v) + \norm{\KSS^{+}\beta} \epsilon_1(u)
        + \norm{\KSS^{+}} \epsilon_1(u)\epsilon_2(v)\\
        &\leq \epsilon_2(v)+\epsilon_1(u)+\norm{\KSS^{+}} \epsilon_1(u)\epsilon_2(v).
\end{aligned}
\end{equation}
The last inequality in \eqref{eq:term2} follows from the fact that
$\alpha^T \KSS^{+}=K_{u S_2}\KSS^{+}$ is a row in $\KSS\KSS^+$ and $\KSS^{+}\beta=\KSS^{+}K_{S_1 v}$ is a column in $\KSS^+ \KSS$, and meanwhile
\[
    \norm{\KSS\KSS^{+}} = \norm{\KSS^{+}\KSS} = 1.
\]
We see from \eqref{eq:diffrewrite} and \eqref{eq:term2} that
\begin{equation}
\label{eq:syuvBound}
    \abs{\kappa(x,y) - \alphahat^T \KSS^{+} \betahat} 
    \leq \abs{\kappa(x,y) - \kappa(u,v)} + \epsilon_1(u)+\epsilon_2(v)+\norm{\KSS^{+}} \epsilon_1(u)\epsilon_2(v),\quad \forall u\in S_1, v\in S_2.
\end{equation}
Minimizing the upper bound in \eqref{eq:syuvBound} over all $u\in S_1, v\in S_2$ yields \eqref{eq:kxyerror}, which completes the proof.
\end{proof}

The entrywise estimate in Theorem \ref{thm:kxyerror} immediately leads to  a matrix max norm estimate, which is proved in the next theorem. 
\begin{theorem}
\label{thm:KXYerror}
Consider finite sets $X, Y\subset \mathbb{R}^d$ and kernel function $\kappa(x,y)$ defined on $X\times Y$.
For any non-empty subsets
$S_1 \subseteq X$ and $S_2 \subseteq Y$,
denote by $\mathcal{X} = X\times Y,\ \mathcal{S}=S_1\times S_2$. 
Then the approximation in \eqref{eq:2sided} satisfies the following estimate
\begin{equation}
\label{eq:KXYerror}
        \norm{ K_{XY} - K_{XS_2}\KSS^+K_{S_1 Y} }_{\max}
        \leq  
    \max_{\substack{x\in X\\ y\in Y}} 
\min_{\substack{u\in S_1\\ v\in S_2}}\left(\abs{\kappa(x,y) - \kappa(u,v)} + \epsilon_1(u)+\epsilon_2(v)+\norm{\KSS^{+}} \epsilon_1(u)\epsilon_2(v)\right),
\end{equation}
where  $\epsilon_1(u) = \norm{K_{x S_2}-K_{u S_2}}$ and $\epsilon_2(v) = \norm{K_{S_1 y}-K_{S_1 v}}$.
\end{theorem}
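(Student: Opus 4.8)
The plan is to obtain this statement as an essentially immediate consequence of Theorem \ref{thm:kxyerror}, since the matrix max norm is by definition the largest absolute entry. First I would recall that for any matrix $M$ indexed by $X\times Y$ one has $\norm{M}_{\max}=\max_{x\in X,\,y\in Y}\abs{M_{xy}}$, and observe that the $(x,y)$ entry of the error matrix $K_{XY}-K_{XS_2}\KSS^+K_{S_1Y}$ is precisely $\kappa(x,y)-K_{xS_2}\KSS^+K_{S_1 y}$, i.e.\ the quantity estimated entrywise in Theorem \ref{thm:kxyerror}. This is the one place to be slightly careful: I should make explicit that $K_{XS_2}\KSS^+K_{S_1Y}$ really does have $(x,y)$ entry equal to $K_{xS_2}\KSS^+K_{S_1 y}$, which follows from block-row/block-column selection of the matrix product, so that there is no ambiguity in reducing the matrix statement to the scalar statement.

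Next I would simply take the maximum over $x\in X$, $y\in Y$ on both sides of \eqref{eq:kxyerror}. For each fixed pair $(x,y)$, Theorem \ref{thm:kxyerror} gives
\[
\abs{\kappa(x,y)-K_{xS_2}\KSS^+K_{S_1 y}}
\leq \min_{\substack{u\in S_1\\ v\in S_2}}\left(\abs{\kappa(x,y)-\kappa(u,v)}+\epsilon_1+\epsilon_2+\norm{\KSS^+}\epsilon_1\epsilon_2\right),
\]
where $\epsilon_1=\norm{K_{xS_2}-K_{uS_2}}$ and $\epsilon_2=\norm{K_{S_1 y}-K_{S_1 v}}$ depend on $(x,y)$ and on $(u,v)$ but not on the other indices; taking $\max_{x,y}$ of the left side is bounded by $\max_{x,y}$ of the right side, which is exactly the right-hand side of \eqref{eq:KXYerror}. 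Combining with the first step, $\norm{K_{XY}-K_{XS_2}\KSS^+K_{S_1Y}}_{\max}=\max_{x,y}\abs{\kappa(x,y)-K_{xS_2}\KSS^+K_{S_1 y}}$, yields the claim.

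There is no real obstacle here: the content is entirely in Theorem \ref{thm:kxyerror}, and this theorem is just the ``matricization'' of it. The only thing worth stating cleanly in the write-up is the identification of the entries of the product $K_{XS_2}\KSS^+K_{S_1Y}$ with the scalar expressions $K_{xS_2}\KSS^+K_{S_1 y}$, and the fact that $\epsilon_1,\epsilon_2$ as defined in the theorem statement are exactly the per-entry quantities arising when Theorem \ref{thm:kxyerror} is applied at $(x,y)$ — so the notation is consistent across the two results. A one-line proof suffices: ``By definition of $\norm{\cdot}_{\max}$, apply Theorem \ref{thm:kxyerror} entrywise and take the maximum over $x\in X$, $y\in Y$.''
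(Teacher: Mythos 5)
Your proposal is correct and matches the paper's argument: the paper likewise proves \eqref{eq:KXYerror} simply by taking the maximum over $x\in X$, $y\in Y$ of both sides of the entrywise bound \eqref{eq:kxyerror} from Theorem \ref{thm:kxyerror}. Your extra remark identifying the $(x,y)$ entry of $K_{XS_2}\KSS^+K_{S_1Y}$ with $K_{xS_2}\KSS^+K_{S_1 y}$ is a harmless clarification the paper leaves implicit.
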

\begin{proof}
Taking maximum of both sides of \eqref{eq:kxyerror} over $x\in X, y\in Y$ yields \eqref{eq:KXYerror}.
\end{proof}

\cdf{Assuming $\kappa$ is Lipschitz continuous,} Theorems \ref{thm:kxyerror} and \ref{thm:KXYerror} imply that the bounds will be small if for any point $x\in X$ there is a point $u\in S_1$ nearby and for any point $y\in Y$ there is a point $v\in S_2$ nearby.
As a result, Theorems \ref{thm:kxyerror} and \ref{thm:KXYerror} indicate that $S_1$ and $S_2$ should be evenly distributed inside $X$ and $Y$ in order to achieve a small approximation error. 
This can be more easily identified when the special case $S_1=X$ or $S_2=Y$ is considered.
\begin{corollary}
\label{cor:KXYerror}
Let $X, Y\subset \mathbb{R}^d$ be finite sets and $\kappa(x,y)$ be defined on $X\times Y$. For any non-empty subsets $S_1  \subseteq X$ and $S_2 \subseteq Y$,
the following estimates hold
\begin{equation}
\label{eq:KXYerrorCor}
\begin{aligned}
        \norm{ K_{XY} - Q_{XS_2}^|K_{XY} }_{\max} &\leq  
\max_{\substack{x\in X\\ y\in Y}} \min_{v\in S_2} \left( \abs{\kappa(x,y)-\kappa(x,v)}+\norm{K_{Xy}-K_{Xv}} \right),\\
        \norm{ K_{XY} - K_{XY} Q_{S_1Y}^- }_{\max} &\leq  
\max_{\substack{x\in X\\ y\in Y}} \min_{u\in S_1} \left( \abs{\kappa(x,y)-\kappa(u,y)}+\norm{K_{xY}-K_{uY}} \right),
\end{aligned}
\end{equation}
where $Q_{XS_2}^|:=K_{XS_2}K_{XS_2}^+$, $Q_{S_1Y}^-:=K_{S_1Y}^+K_{S_1Y}$.
\end{corollary}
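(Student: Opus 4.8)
The plan is to obtain both inequalities as degenerate special cases of Theorem~\ref{thm:KXYerror}, taking $S_1 = X$ for the first estimate and $S_2 = Y$ for the second. The point is that the general two-sided estimate \eqref{eq:KXYerror} already contains everything; one only has to specialize and then make an optimal (trivial) choice in the inner minimization.

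For the first inequality I would set $S_1 = X$ in the two-sided approximation \eqref{eq:2sided}. Then $\KSS = K_{XS_2}$ and $K_{S_1 Y} = K_{XY}$, so the approximant collapses to $K_{XS_2}K_{XS_2}^+ K_{XY} = Q_{XS_2}^| K_{XY}$, which is exactly the object on the left of the first inequality in \eqref{eq:KXYerrorCor}. Now apply \eqref{eq:KXYerror}. Since the outer maximum runs over $x \in X = S_1$, for each fixed $x$ I may bound the inner minimum over $u \in S_1$ from above by evaluating at $u = x$. With this choice $\epsilon_1 = \norm{K_{xS_2} - K_{xS_2}} = 0$, so both the term $\epsilon_1$ and the term $\norm{\KSS^{+}}\epsilon_1\epsilon_2$ drop out; in particular the possibly large factor $\norm{\KSS^{+}}$ never enters, so no conditioning assumption on $K_{S_1 S_2}$ is needed. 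What remains inside the minimum over $v \in S_2$ is $\abs{\kappa(x,y) - \kappa(x,v)} + \epsilon_2$ with $\epsilon_2 = \norm{K_{S_1 y} - K_{S_1 v}} = \norm{K_{Xy} - K_{Xv}}$, which is precisely the claimed bound.

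The second inequality follows by the symmetric argument: take $S_2 = Y$, so that $\KSS = K_{S_1 Y}$, $K_{XS_2} = K_{XY}$, and the approximant becomes $K_{XY}K_{S_1 Y}^+ K_{S_1 Y} = K_{XY} Q_{S_1 Y}^-$. In \eqref{eq:KXYerror} the outer maximum runs over $y \in Y = S_2$, so for each fixed $y$ one bounds the inner minimum by choosing $v = y$, which forces $\epsilon_2 = 0$, again eliminates the $\norm{\KSS^{+}}$ term, and leaves $\min_{u \in S_1}\left(\abs{\kappa(x,y) - \kappa(u,y)} + \norm{K_{xY} - K_{uY}}\right)$.

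There is no genuine obstacle here: the entire content is the bookkeeping check that the degenerate pseudoinverse products $K_{XS_2}K_{XS_2}^+ K_{XY}$ and $K_{XY}K_{S_1 Y}^+ K_{S_1 Y}$ coincide with the operators $Q_{XS_2}^|$ and $Q_{S_1 Y}^-$ defined in the statement, together with the observation that setting $u = x$ (resp.\ $v = y$) in Theorem~\ref{thm:KXYerror} annihilates $\epsilon_1$ (resp.\ $\epsilon_2$) and hence the only term carrying $\norm{\KSS^{+}}$. The one remark worth making explicit in the write-up is that this is exactly why the corollary requires no hypothesis tying $S_1$ or $S_2$ to the rank of $K_{XY}$, in contrast to the general two-sided bound.
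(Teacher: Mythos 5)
Your argument is correct and is essentially the paper's own proof: specialize Theorem \ref{thm:KXYerror} with $S_1=X$ (resp.\ $S_2=Y$), note the approximant collapses to $Q_{XS_2}^|K_{XY}$ (resp.\ $K_{XY}Q_{S_1Y}^-$), and bound the inner minimum by taking $u=x$ (resp.\ $v=y$) so that $\epsilon_1$ (resp.\ $\epsilon_2$) vanishes and the $\norm{\KSS^{+}}$ term drops out; the paper does the first inequality this way and leaves the second as ``similar,'' which you spell out. Only your closing remark is slightly off: Theorem \ref{thm:KXYerror} also imposes no rank or conditioning hypothesis on $K_{S_1S_2}$ (the pseudoinverse handles rank deficiency), the difference being merely that its bound carries the factor $\norm{\KSS^{+}}$ while the corollary's does not.
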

\begin{proof}
We only show the first inequality in \eqref{eq:KXYerrorCor} and the second one can be proved in a similar fashion.
Note that the first inequality in \eqref{eq:KXYerrorCor} is a special case of \eqref{eq:KXYerror} with $S_1=X$. 
In this case, in the upper bound of \eqref{eq:KXYerror},
the minimum over $u\in X$ is no greater than the value achieved by choosing $u=x$.
Hence we see that if $S_1=X$ and $u=x$, then $\epsilon_1=\norm{K_{xS_2}-K_{xS_2}}=0$
and \eqref{eq:KXYerror} becomes
\begin{equation*}
        \norm{ K_{XY} - Q_{XS_2}^|K_{X Y} }_{\max}
        \leq  
    \max_{\substack{x\in X\\ y\in Y}} 
\min_{v\in S_2}\left(\abs{\kappa(x,y) - \kappa(x,v)} + \norm{K_{Xy}-K_{Xv}}\right),
\end{equation*}
which is the first inequality in \eqref{eq:KXYerror}.
\end{proof}

\cdf{Assuming $\kappa$ is Lipschitz continuous,}
Corollary \ref{cor:KXYerror} further reveals the interconnection between the approximation accuracy and the geometry of sample points. Algebraically, $\norm{ K_{XY} - Q_{XS_2}^|K_{XY} }_{\max}$ and $\norm{ K_{XY} - K_{XY}Q_{S_1Y}^-  }_{\max}$ measure how well $K_{XS_2}$ and $K_{S_1Y}$ capture the column and row spaces of $K_{XY}$, respectively. Geometrically, the bound on the right-hand side of \eqref{eq:KXYerrorCor} will be small if $S_1$ and $S_2$ are able to capture the global geometry of $X$ and $Y$, respectively.

\subsubsection{Geometric estimates} 
\label{ssub:Geometric}
In the following, we reveal the geometric implication of the error bounds in Theorem \ref{thm:KXYerror} and Corollary \ref{cor:KXYerror} with the help of the so-called \emph{discrete Lipschitz constant} as defined below.
It is used to derive new error bounds that give a more straightforward interpretation of how the sets of landmark points $S_1$ and $S_2$ affect the accuracy of the approximation $K_{XY}\approx K_{X S_1} \KSS^+ K_{S_1 Y}$.

\begin{definition}[Discrete Lipschitz constant]
\label{def:LXY}
   Let $\kappa(x,y)$ be a function defined on $X\times Y$. Denote $\ZZZ=Z_1\times Z_2$, $\SSS=S_1\times S_2$, $W_1\times W_2$ as three non-empty subsets of $X\times Y$. 
    The discrete Lipschitz constants of $\kappa$ associated with these three subsets are defined by
    \begin{equation}
        \label{eq:LXY}
    \begin{aligned}
    L(\ZZZ,\SSS) &:= \min \{ C: |\kappa(x,y)-\kappa(u,v)|^2\leq C^2 (|x-u|^2+|y-v|^2) \;\; \forall (x,y)\in \ZZZ, (u,v)\in \SSS \},\\
    L(Z_2,S_2)_{W_1} &:= \min \{ C: |\kappa(x,y)-\kappa(x,v)|^2\leq C^2 |y-v|^2 \;\; \forall x\in W_1, y\in Z_2, v\in S_2\},\\
    L(Z_1,S_1)_{W_2} &:= \min \{ C: |\kappa(x,y)-\kappa(u,y)|^2\leq C^2 |x-u|^2 \;\; \forall  y\in W_2, x\in Z_1, u\in S_1\}.
    \end{aligned}
    \end{equation}
\end{definition}

Since $X, Y$ are finite sets, each minimum in \eqref{eq:LXY} exists. 
Note that in general $L(\ZZZ,\SSS)$ is \emph{not} the Lipschitz constant of $\kappa$ since we do \emph{not} assume $\kappa$ to be Lipschitz continuous or even defined outside {$X\times Y$}.
\cdf{
If $\kappa(x,y)$ is Lipschitz continuous in a region containing $X\times Y$ with Lipschitz constant $L$, then it is easy to see that $L(\ZZZ,\SSS)\leq L$, as stated in Proposition \ref{prop:L}.
\begin{proposition}
\label{prop:L}
   Let $\kappa(x,y)$ be a Lipschitz continuous function on a domain $D_1\times D_2$ with Lipschitz constant $L$. 
For any discrete subset $X\times Y\subset D_1\times D_2$, the discrete Lipschitz constants defined in \eqref{eq:LXY} are all smaller than or equal to $L$.
\end{proposition}
}

\cdf{
The discrete Lipschitz constants are introduced to make the result derived in this section applicable to general kernel functions with as few constraints as possible.
In many applications, the kernel functions are actually not only Lipschitz continuous but also smooth in the domain of interest.
Hence it is sufficient to use the Lipschitz constant.
}
For example, in machine learning and statistics, the Gaussian kernel $\exp(-\frac{|x-y|^2}{\sigma^2})$ is smooth; 
radial basis functions like $\sqrt{1+|x-y|^2}$ and $(1+|x-y|^2)^{-1/2}$ are smooth;
in potential theory, kernels like $\frac{1}{|x-y|}$ are smooth in $D_1\times D_2$ with well-separated domains $D_1$ and $D_2$, which is a key assumption in the fast multipole method \cite{rokhlinpotential,GREENGARD1997280,xiaobaiFMM} and hierarchical matrices in general \cite{h2lec,bebendorf2000,hack2015book,smashIPDPS}.

Using the discrete Lipschitz constant, 
we can show in the following that the low-rank approximation error bound depends on the geometric quantity
\begin{equation}
\label{eq:delta}
    \delta_{Z,S}:=\max_{x\in Z}\dist{x,S} \quad S\subseteq Z.
\end{equation}
The quantity $\delta_{Z,S}$ measures the closeness between $Z$ and $S$.
The smaller $\delta_{Z,S}$ is, the ``closer'' $S$ is to $Z$.
In fact, if $\delta_{Z,S}$ is small, then for any $x\in Z$, there exists a point in $S$ that is close to $x$.

%

We can now derive an error bound for \eqref{eq:2sided} in terms of the geometric quantities $\delta_{X,S_1}$, $\delta_{Y,S_2}$ for subsets $S_1\subseteq X$, $S_2\subseteq Y$, respectively.
The result is stated in Theorem \ref{thm:deltaKXYerror}.
\begin{theorem}
\label{thm:deltaKXYerror}
Let $X, Y\subset \mathbb{R}^d$ be finite sets and $\kappa(x,y)$ be a function defined on $X\times Y$.
For any non-empty subsets
$S_1 \subseteq X$ and $S_2 \subseteq Y$,
define $\mathcal{X} = X\times Y$, $\mathcal{S}=S_1\times S_2$. 
Then the following estimate holds
\begin{equation*}
        \norm{ K_{XY} - K_{XS_2}\KSS^+K_{S_1 Y} }_{\max}
\leq C_1\delta_{X,S_1} + C_2\delta_{Y,S_2} + C_3\delta_{X,S_1}\delta_{Y,S_2},
\end{equation*}
where 
\begin{equation}
\label{eq:C123}
\begin{aligned}
C_1&=L(\mathcal{X},\mathcal{S})+\sqrt{r_2} L(X,S_1)_{S_2},\\
C_2&=L(\mathcal{X},\mathcal{S})+\sqrt{r_1} L(Y,S_2)_{S_1},\\
C_3&=\norm{\KSS^+}\sqrt{r_1 r_2} L(X,S_1)_{S_2}L(Y,S_2)_{S_1},
\end{aligned}
\end{equation}
with $r_i=\text{card}(S_i)$.
\cdf{
Furthermore, if $\kappa(x,y)$ is Lipschitz continuous over $D_1\times D_2$ containing $X\times Y$ with Lipschitz constant $L$, then 
\begin{equation*}
        \norm{ K_{XY} - K_{XS_2}\KSS^+K_{S_1 Y} }_{\max}
\leq (1+\sqrt{r_2})L\delta_{X,S_1} + (1+\sqrt{r_1})L\delta_{Y,S_2} + \norm{\KSS^+}\sqrt{r_1 r_2}L^2\delta_{X,S_1}\delta_{Y,S_2}.
\end{equation*}
}
\end{theorem}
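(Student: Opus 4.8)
The plan is to start from the matrix max-norm bound already established in Theorem~\ref{thm:KXYerror} and convert each of the three terms in its right-hand side into a geometric quantity using the discrete Lipschitz constants of Definition~\ref{def:LXY}. Recall that Theorem~\ref{thm:KXYerror} gives, for every fixed pair $(x,y)\in X\times Y$ and every choice of $(u,v)\in S_1\times S_2$,
\[
\norm{K_{XY}-K_{XS_2}\KSS^+K_{S_1Y}}_{\max}\le \max_{x,y}\min_{u,v}\left(\abs{\kappa(x,y)-\kappa(u,v)}+\epsilon_1+\epsilon_2+\norm{\KSS^+}\epsilon_1\epsilon_2\right),
\]
with $\epsilon_1=\norm{K_{xS_2}-K_{uS_2}}$ and $\epsilon_2=\norm{K_{S_1y}-K_{S_1v}}$. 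The strategy is: for each $x\in X$ pick $u=u(x)\in S_1$ achieving $\abs{x-u}=\dist{x,S_1}\le\delta_{X,S_1}$, and for each $y\in Y$ pick $v=v(y)\in S_2$ achieving $\abs{y-v}=\dist{y,S_2}\le\delta_{Y,S_2}$; this particular choice is admissible in the $\min_{u,v}$, so it only makes the bound larger to use it.

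Next I would bound each of the four pieces with this choice. First, $\abs{\kappa(x,y)-\kappa(u,v)}\le L(\mathcal{X},\mathcal{S})\sqrt{\abs{x-u}^2+\abs{y-v}^2}\le L(\mathcal{X},\mathcal{S})(\abs{x-u}+\abs{y-v})\le L(\mathcal{X},\mathcal{S})(\delta_{X,S_1}+\delta_{Y,S_2})$, using $\sqrt{a^2+b^2}\le a+b$ for nonnegative $a,b$; this contributes to both $C_1$ and $C_2$. For $\epsilon_1=\norm{K_{xS_2}-K_{uS_2}}$, write out the Euclidean norm of the length-$r_2$ vector with entries $\kappa(x,v_j)-\kappa(u,v_j)$ for $v_j\in S_2$; each entry is at most $L(X,S_1)_{S_2}\abs{x-u}$ by the second definition in \eqref{eq:LXY} (with $W_1=S_1$, so that $x\in W_1$... careful: here the roles are reversed, one needs the bound $\abs{\kappa(x,v)-\kappa(u,v)}\le L(X,S_1)_{S_2}\abs{x-u}$, which is exactly the third line of \eqref{eq:LXY} read with $W_2=S_2$, $y=v\in S_2$, $x\in X$, $u\in S_1$), hence $\epsilon_1\le\sqrt{r_2}\,L(X,S_1)_{S_2}\abs{x-u}\le\sqrt{r_2}\,L(X,S_1)_{S_2}\,\delta_{X,S_1}$. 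Symmetrically $\epsilon_2\le\sqrt{r_1}\,L(Y,S_2)_{S_1}\,\delta_{Y,S_2}$. Finally $\norm{\KSS^+}\epsilon_1\epsilon_2\le\norm{\KSS^+}\sqrt{r_1r_2}\,L(X,S_1)_{S_2}L(Y,S_2)_{S_1}\,\delta_{X,S_1}\delta_{Y,S_2}$, which is exactly $C_3\,\delta_{X,S_1}\delta_{Y,S_2}$.

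Summing the four contributions and grouping the terms that multiply $\delta_{X,S_1}$, those that multiply $\delta_{Y,S_2}$, and the single cross term $\delta_{X,S_1}\delta_{Y,S_2}$ gives precisely the constants $C_1,C_2,C_3$ in \eqref{eq:C123}. Since the bound holds for the specific admissible choice $(u(x),v(y))$ and is uniform in $(x,y)$, taking $\max_{x,y}$ and then using $\delta_{X,S_1},\delta_{Y,S_2}$ as uniform upper bounds on $\dist{x,S_1},\dist{y,S_2}$ completes the argument.

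The only mild subtlety — and the step I would be most careful about — is the correct bookkeeping of which directional discrete Lipschitz constant controls $\epsilon_1$ versus $\epsilon_2$, since in $\epsilon_1$ the second argument is frozen at a point of $S_2$ while the first argument varies between a point of $X$ and a point of $S_1$; one must read the definitions in \eqref{eq:LXY} with the "frozen" variable playing the role of the subscript set. Everything else is the elementary inequality $\sqrt{a^2+b^2}\le a+b$ and the passage from an $\ell^\infty$ bound on vector entries to an $\ell^2$ bound via the factor $\sqrt{r_i}$. No additional hypotheses on $\kappa$ (smoothness, global Lipschitz continuity) are needed, because the discrete Lipschitz constants are defined only on the finite sets involved.
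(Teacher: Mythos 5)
Your proposal is correct and follows essentially the same route as the paper: starting from Theorem~\ref{thm:KXYerror}, bounding $\abs{\kappa(x,y)-\kappa(u,v)}$, $\epsilon_1$, $\epsilon_2$ via the discrete Lipschitz constants with the $\sqrt{r_i}$ factors, and then converting the minimization over $(u,v)$ into $\dist{x,S_1}$, $\dist{y,S_2}$ and the outer maximum into $\delta_{X,S_1}$, $\delta_{Y,S_2}$. Your choice of nearest points $u(x),v(y)$ is just an explicit evaluation of the same minimum the paper computes, and your reading of which directional constant ($L(X,S_1)_{S_2}$ vs.\ $L(Y,S_2)_{S_1}$) controls $\epsilon_1$ vs.\ $\epsilon_2$ matches Definition~\ref{def:LXY} exactly.
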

\begin{proof}
The result can be proved using Theorem \ref{thm:KXYerror} and the definition in \eqref{eq:LXY}.
First we estimate the terms in the upper bound in Theorem \ref{thm:KXYerror}.
The definition of Lipschitz constants in \eqref{eq:LXY} implies that 
\begin{equation}
\label{eq:kLip}
 \begin{aligned}
    \abs{\kappa(x,y)-\kappa(u,v)}&\leq L(\mathcal{X},\mathcal{S}) \left( |x-u|^2+|y-v|^2 \right)^{1/2}
    \leq L(\mathcal{X},\mathcal{S}) \left( |x-u|+|y-v| \right),\\
\epsilon_1(u)=\norm{K_{xS_2}-K_{uS_2}}&\leq \left( \sum_{v\in S_2} L(X,S_1)_{S_2}^2 |x-u|^2  \right) ^{1/2}
\leq \sqrt{r_2} L(X,S_1)_{S_2} |x-u|,\\
\epsilon_2(v)=\norm{K_{S_1y}-K_{S_1v}}&\leq \left( \sum_{u\in S_1} L(Y,S_2)_{S_1}^2 |y-v|^2  \right) ^{1/2}
\leq \sqrt{r_1} L(Y,S_2)_{S_1} |y-v|.
 \end{aligned}
\end{equation}
Define $C_1,C_2,C_3$ as in \eqref{eq:C123}.
The estimates in \eqref{eq:kLip}, which separate $(x,u)$ and $(y,v)$ into different terms, allow us to organize the upper bound in \eqref{eq:KXYerror} in terms of $|x-u|$ and $|y-v|$ and deduce that
\begin{equation*}
 \begin{aligned}
&\max_{\substack{x\in X\\ y\in Y}} 
\min_{\substack{u\in S_1\\ v\in S_2}}\left(\abs{\kappa(x,y) - \kappa(u,v)} + \epsilon_1(u)+\epsilon_2(v)+\norm{\KSS^{+}} \epsilon_1(u)\epsilon_2(v)\right)\\
&\leq 
\max_{\substack{x\in X\\ y\in Y}} 
\min_{\substack{u\in S_1\\ v\in S_2}}
\left( C_1|x-u| + C_2|y-v| + C_3|x-u||y-v| \right)\\
&=
\max_{\substack{x\in X\\ y\in Y}} 
\left( C_1\dist{x,S_1} + C_2\dist{y,S_2} + C_3\dist{x,S_1}\dist{y,S_2} \right)\\
&=
C_1\max_{x\in X}\dist{x,S_1} + C_2\max_{y\in Y}\dist{y,S_2} + C_3\max_{x\in X}\dist{x,S_1}\max_{y\in Y}\dist{y,S_2} \\
&=
C_1\delta_{X,S_1} + C_2\delta_{Y,S_2} + C_3\delta_{X,S_1}\delta_{Y,S_2}.
\end{aligned}
\end{equation*}
This, together Theorem \ref{thm:KXYerror}, completes the proof of the first inequality.
\cdf{The special case where $\kappa(x,y)$ is Lipschitz continuous follows immediately from the first inequality and Proposition \ref{prop:L}.}
\end{proof}

The estimate in Theorem \ref{thm:deltaKXYerror} implies that in order to obtain a good approximation, $S_1, S_2$ should be chosen such that $\delta_{X,S_1}, \delta_{Y,S_2}$ are small.
Geometrically, according to the definition of $\delta$ in \eqref{eq:delta}, this means that $S_1$ and $S_2$ should represent the geometry of $X$ and $Y$ as much as possible. 
In the context of integral equations, a recent analytical study \cite{bebendorf2020} also discussed the relationship between the approximation error of adaptive cross approximation (ACA) and the selected subsets measured by the geometric concept of \emph{fill distance} (cf. \cite{filldistance}), which reflects how well the subset spans the computational domain.
Both fill distance and $\delta$ in \eqref{eq:delta} provide similar geometric interpretations of the quality of the selected subsets, \cdf{where the fill distance in \cite[Section 1.3]{filldistance}: $d(\Omega,X)=\sup\limits_{y\in\Omega}\inf\limits_{x\in X}|y-x|$ is defined for continuous regions $\Omega$ while $\delta$ focuses on finite sets of points. 
As a result, $\delta$ is always computable but fill distance is not in general.}


The error estimates derived in this section apply to \emph{any} subsets $S_1\subset X$, $S_2\subset Y$, regardless of the algorithm used to generate $S_1$, $S_2$.
Thus when $S_1, S_2$ are poorly chosen (i.e. corresponding to poor low-rank approximation), we expect the bounds to reflect the fact that the matrix approximation error is large.
This motivates the use of the estimates as indicators to distinguish ``good'' subsets and ``bad'' subsets, which will be investigated next in Section \ref{sub:indicator}.

\cdf{
\textit{Remark.}
The estimate in Theorem \ref{thm:deltaKXYerror} (as well as the one in Theorem \ref{thm:KXYdelta}) is derived to offer guidance to the fast and general algorithm based on subset selection, and it is not necessarily ``tight''.
Since the goal is to design an algorithm with linear (or nearly linear) complexity in time and space for computing accurate low-rank kernel matrix approximations by subset selection, it is desirable to obtain a straightforward characterization of ``good'' subsets via analyzing the approximation error, in order to inspire the algorithm design.
The geometric quantity $\delta$ serves the purpose.
In fact, for $O(1)$ subsets $S_1$, $S_2$, the quantities $\delta_{X,S_1}$, $\delta_{Y,S_2}$ are not only easy to compute (with linear complexity in the size of $X$, $Y$), but also consistent with the practical result when distinguishing ``good'' and ``bad'' choices of subsets for low-rank approximation as illustrated in the following section.
Hence we see that the geometric quantity $\delta$ from the theoretical result in Theorem \ref{thm:deltaKXYerror} (or Theorem \ref{thm:KXYdelta}) leads to error \emph{indicators} for subset selection.
}

\subsection{Subset quality indicators}
\label{sub:indicator}

The error bounds in Theorems \ref{thm:KXYerror} and
\ref{thm:deltaKXYerror} are fully computable and can be used to relate
the choice of subset to the low-rank approximation error.
Error bounds of this kind often arise in a posteriori error estimates for the numerical solution of partial differential equations using adaptive mesh refinement (AMR).
In AMR, an error indicator, usually a computable term in the a posteriori error estimate, is used to indicate the quality of the numerical solution and determine whether further refinement is needed without knowing the exact solution (cf. \cite{ZZ1987,verf1994,verf2005confusion,braess2008,localL2,cai2020equi,ainsworth2011confusion}).
Inspired by this philosophy, in low-rank compression methods based on geometric selection, we can use the error estimates to construct \emph{subset quality indicators} for inferring the quality of the selected subsets.
For any choice of subset $S_1\times S_2\subseteq X\times Y$, 
we consider the following five subset quality indicators:
\begin{equation}
\label{eq:indicators}
\begin{aligned}
\text{
indicator 1 = $\max_{\substack{x\in X\\ y\in Y}} \min\limits_{\substack{u\in S_1\\ v\in S_2}} |\kappa(x,y)-\kappa(u,v)|$,\quad
indicator 2 = $\max_{x\in X} \min_{u\in S_1} \norm{K_{x S_2}-K_{u S_2}}$},\\ 
\text{
indicator 3 = $\delta_{X,S_1}$,\quad 
indicator 4 = $\delta_{Y,S_2}$,\quad
indicator 5 = $\norm{K_{S_1 S_2}^+}$.
}
\end{aligned}
\end{equation}
The first two indicators are related to the upper bound derived in Theorems \ref{thm:KXYerror},
while the last three indicators are from the estimate in Theorem \ref{thm:deltaKXYerror}.
The costs for computing the indicators are \emph{not} the same.
In fact, assume $K_{XY}$ is $m$-by-$n$ and there are $O(1)$ points in $S_1$ and $S_2$.
The computational complexities for the five indicators in \eqref{eq:indicators} are:
$O(mn)$, $O(m)$, $O(m)$, $O(n)$, $O(1)$, respectively.
Hence in practice, it is more convenient to use the latter four indicators.


Given different choices of subsets, we present numerical experiments below to demonstrate 
how to use the subset quality indicators to predict which choice is more likely to yield a better approximation without computing the exact matrix approximation error.
The results also underscore the impact of the geometry of the selected subset on the low-rank approximation accuracy.
We perform two experiments, one with a rectangular kernel matrix associated with \emph{two} sets of points and  the other with a symmetric positive definite kernel matrix associated with \emph{one} set of points.

\textbf{Experiment 1.}
We consider the approximation 
$K_{XY}\approx K_{XS_2}K_{S_1 S_2}^+K_{S_1 Y}$.
The kernel function is chosen as $\kappa(x,y)=\log|x-y|$ and the rectangular kernel matrix $K_{XY}$ is associated with $X$, $Y$ (illustrated in Figure \ref{fig:ratioTestSet1}), where $X$ contains $50$ points and $Y$ contains $100$ points.
We considers two choices for $S_1\times S_2\subset X\times Y$.
Choice 1 generates points $S_1$, $S_2$ via random sampling from $X$, $Y$.
Choice 2 chooses evenly distributes points to form $S_1$, $S_2$ using FPS. These subsets are shown in Figure \ref{fig:ratioTestSet1rand} and Figure \ref{fig:ratioTestSet1FPS}. 
To determine which choice yields the better approximation, we take the ratio of the respective indicators and compare it to the ratio of the exact matrix approximation errors from the two choices.
Namely, we compute
\[
    \text{ratio-indicator } k = \frac{\text{indicator } k\; \text{of Choice 2}}{\text{indicator } k\; \text{of Choice 1}},\quad 
    \text{ratio-error}  = \frac{\text{matrix error of Choice 2}}{\text{matrix error of Choice 1}},
\]
where the matrix approximation error is measured in max norm.
If the ratio-indicator is larger than 1, then the prediction is that Choice 1 is better.
Otherwise, the prediction is that Choice 2 is better.
We then compare the indicator ratios to the ground truth: the ratio of matrix approximation errors between Choice 2 and Choice 1.
If the indicator ratio is consistent with the error ratio, i.e. both larger than 1 or both smaller than 1, then the prediction based on the indicator is correct.
The result is shown in Figure \ref{fig:ratioSet1x}.
It is easily seen that, for different approximation ranks, the indicator ratios and the error ratio always stay below the horizontal line $y=1$.
Hence the indicators correctly predict the fact that Choice 2 of subsets yields a better low-rank approximation than Choice 1.
Furthermore, note that unlike Choice 1, the points in Choice 2 are evenly distributed over the dataset and thus are expected to yield a better approximation according to the theoretical results in Section \ref{sub:errorfac2}.

\begin{figure}[htbp]
    \centering
    \subfigure[Datasets $X$, $Y$]{\label{fig:ratioTestSet1}\includegraphics[width=60mm]{./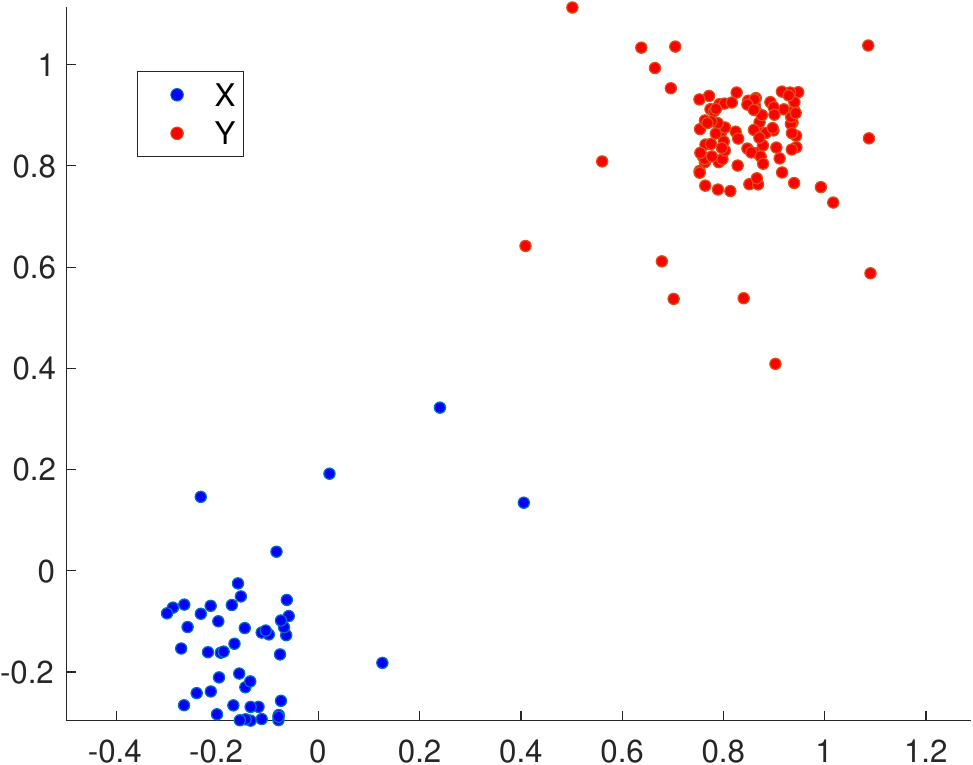}}
     \hspace{.1mm}
    \subfigure[Comparison of ratios]{\label{fig:ratioSet1x}\includegraphics[width=60mm]{./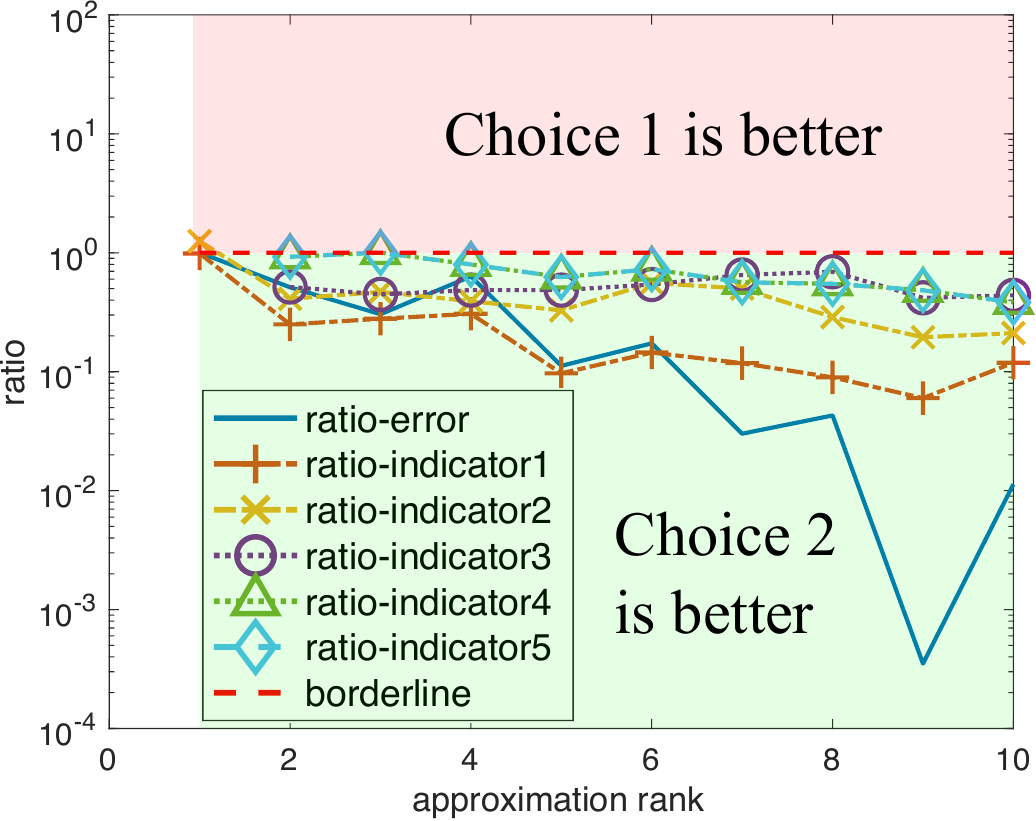}}
     \hspace{.1mm}
    \subfigure[Choice 1]{\label{fig:ratioTestSet1rand}\includegraphics[width=60mm]{./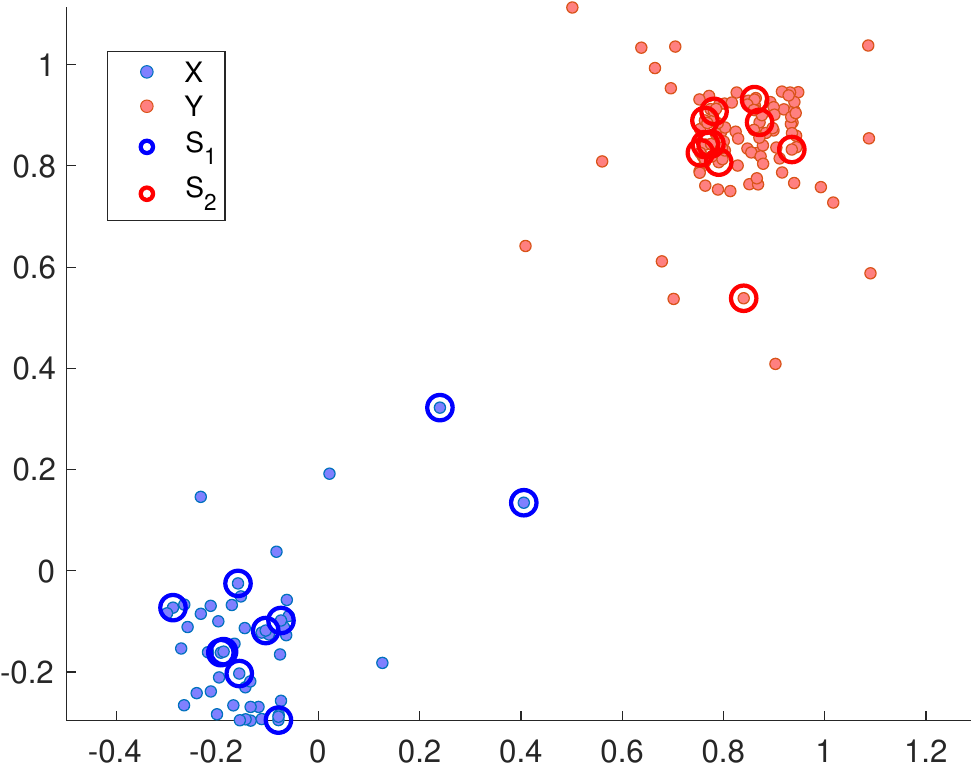}}
     \hspace{.1mm}
    \subfigure[Choice 2]{\label{fig:ratioTestSet1FPS}\includegraphics[width=60mm]{./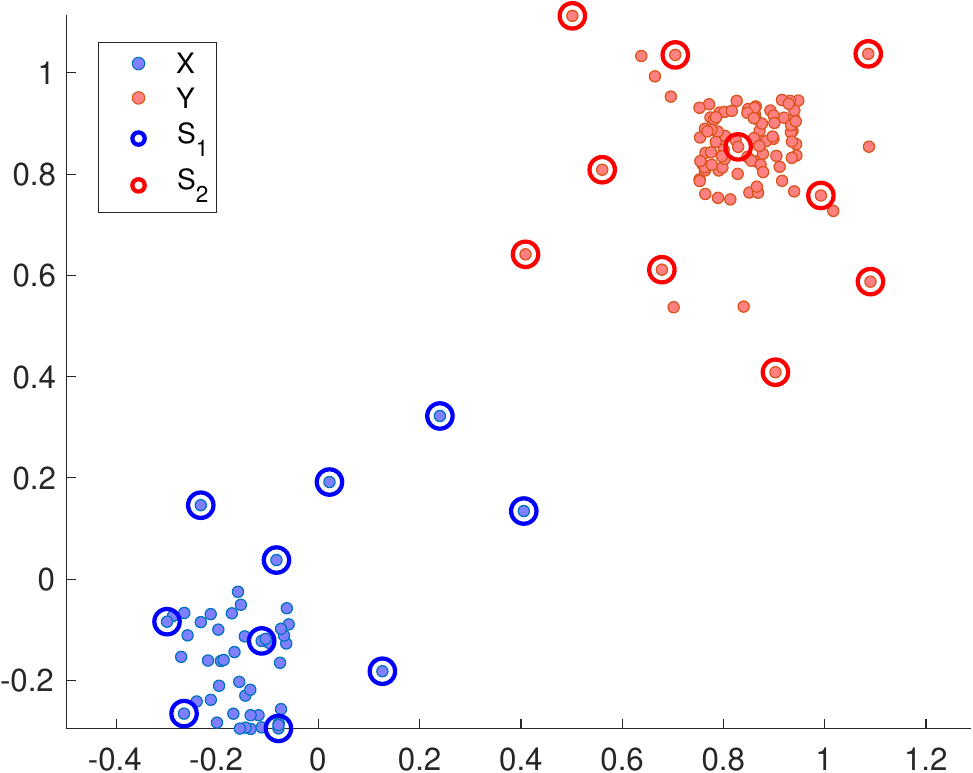}}
    \caption{Experiment 1: Predicting the better choice of subsets $S_1,S_2$ using subset indicators in \eqref{eq:indicators}.}
    \label{fig:ratioSet1}
\end{figure}

\textbf{Experiment 2.}
We consider the Gaussian kernel
$\kappa(x,y)=\exp(-|x-y|^2/0.09)$
and the symmetric approximation 
$K_{XX}\approx K_{XS}K_{SS}^+K_{SX}$,
where the dataset $X$ \cdf{contains 100 points as shown} in Figure \ref{fig:ratioTestSet2}.
We follow the same choices of subset as in Experiment 1, i.e., Choice 1 selects random samples while Choice 2 selects evenly distributed points. These two choices of subset $S$ are shown in Figure \ref{fig:ratioTestSet2rand} and Figure \ref{fig:ratioTestSet2FPS}.
We compute the same indicators as in \eqref{eq:indicators}, where in this case $Y=X$ and $S_2=S_1=S$.
The result is shown in Figure \ref{fig:ratioSet1x}.
We see that when the approximation rank is larger than 5, all indicator ratios and the error ratio stay below the horizontal line $y=1$ simultaneously.
This implies that Choice 2 yields a better approximation and the indicators give the correct prediction.
Again, we see that evenly distributed points  yield a better approximation, as discussed in Section \ref{sub:errorfac2}.

\begin{figure}[htbp]
    \centering
    \subfigure[Dataset $X$]{\label{fig:ratioTestSet2}\includegraphics[width=60mm]{./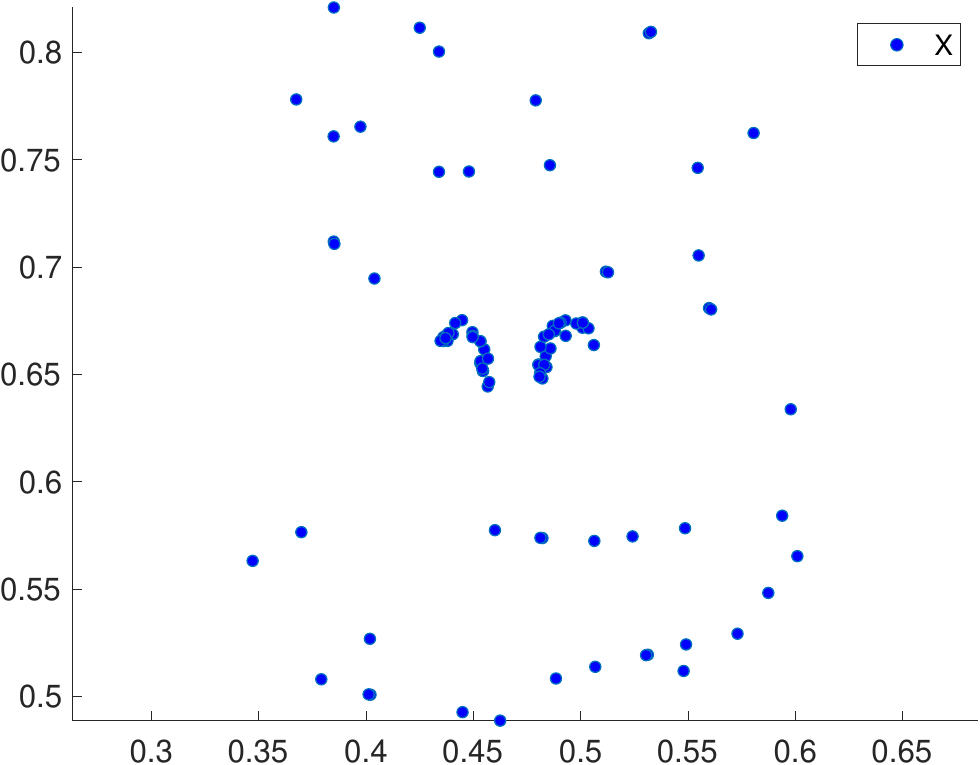}}
     \hspace{.1mm}
    \subfigure[Comparison of ratios]{\label{fig:ratioSet2x}\includegraphics[width=60mm]{./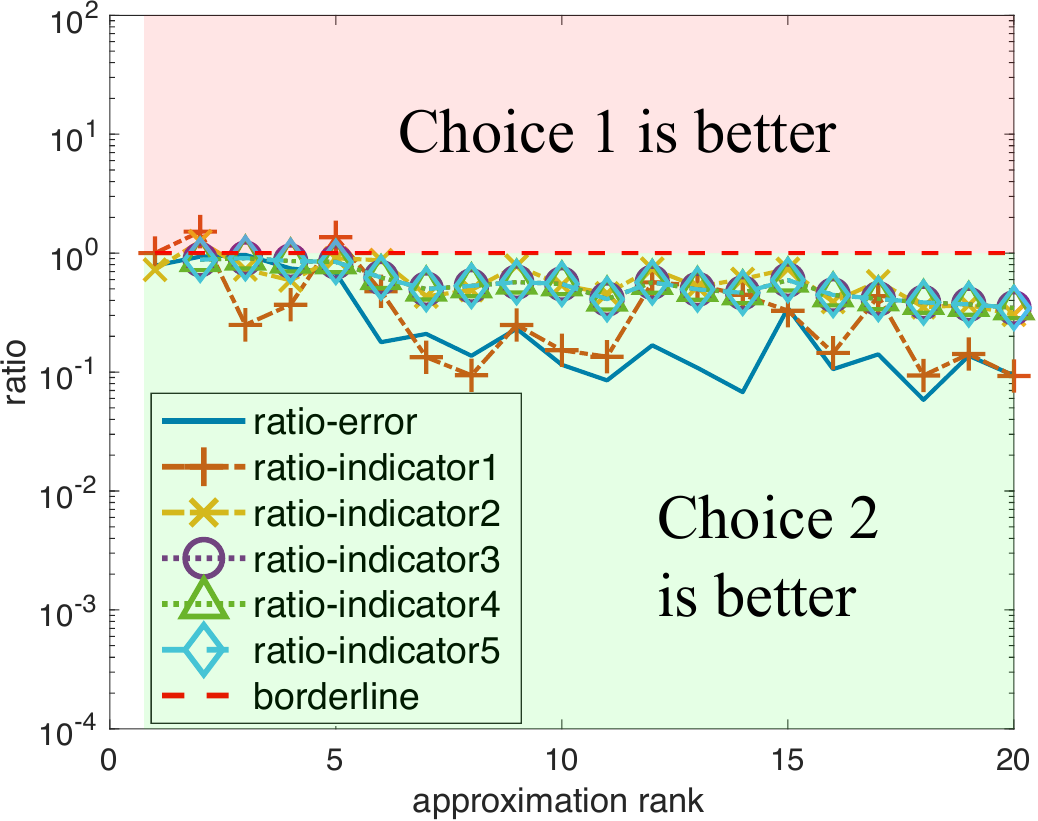}}
     \hspace{.1mm}
    \subfigure[Choice 1]{\label{fig:ratioTestSet2rand}\includegraphics[width=60mm]{./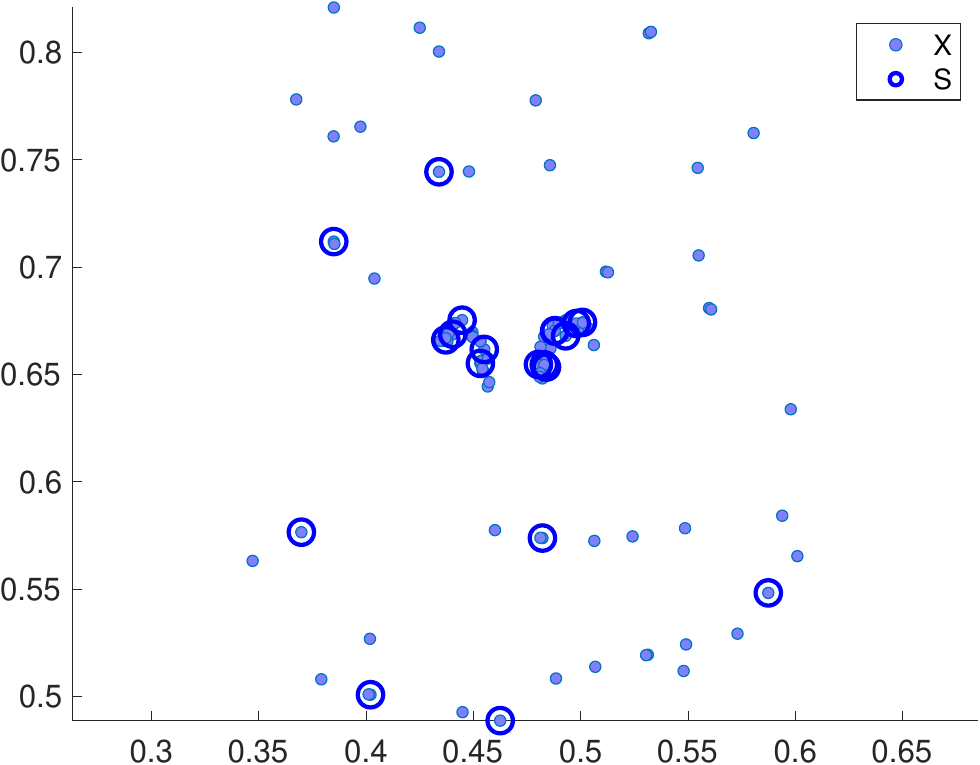}}
     \hspace{.1mm}
    \subfigure[Choice 2]{\label{fig:ratioTestSet2FPS}\includegraphics[width=60mm]{./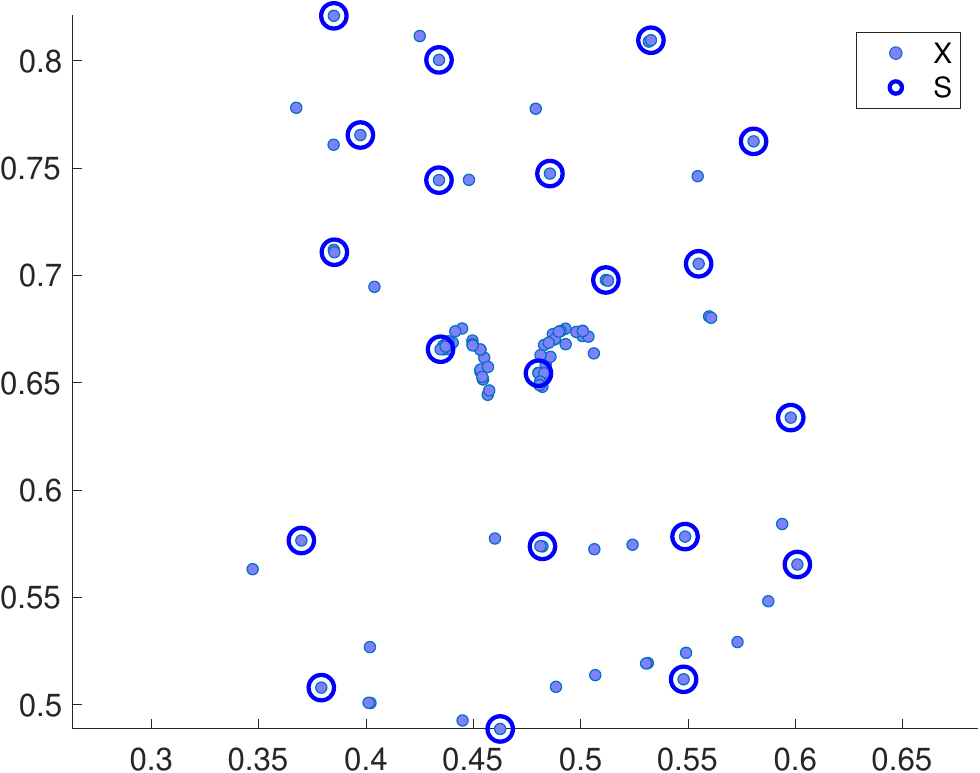}}
    \caption{Experiment 2: Predicting the better choice of subset $S$ using subset indicators in \eqref{eq:indicators}.}
    \label{fig:ratioSet2}
\end{figure}

\section{One-sided low-rank kernel matrix approximation}
\label{sec:one-sided}
This section analyzes the data-driven geometric approach for the one-sided low-rank approximation \eqref{eq:1sided}.  
Compared to the two-sided case, the algorithm in the one-sided case only samples points from one set of points and applies an algebraic factorization to postprocess the selected submatrix. 
Numerical experiments in Section \ref{sec:test} show that the one-sided algorithm is slightly more accurate.

\cdf{
The key algebraic technique we use is the strong rank-reveal QR factorization (SRRQR\cite{rrqr96}).
The original setting in \cite{rrqr96} considers ``tall'' matrices and direct application of the result yields pessimistic computational complexity in the current setting where ``short'' matrices are of interest.
To resolve this issue, we discuss the new setting and derive a nearly optimal complexity estimate in Section \ref{sub:mySRRQR}.
}
We present the algorithm for the one-sided low-rank approximation in Section \ref{sub:one-sided} and provide the error analysis in Section
\ref{sub:errorfac1}.  The special case of a symmetric kernel matrix
$K_{XX}$ with a symmetric kernel $\kappa(x,y)=\kappa(y,x)$ is discussed
in Section \ref{sub:Special case}.

\subsection{Strong rank-revealing QR for ``short'' matrices} 
\label{sub:mySRRQR}
\cdf{
The classical result on SRRQR is cited in Proposition \ref{prop:SRRQR}. 
Algorithms for computing SRRQR are proposed in \cite{rrqr96} and we use \cite[Algorithm 4]{rrqr96} in our approach to postprocess the $r\times n$ matrix $K_{X S_2}^T$, where $S_2\subseteq Y$ is the selected subset with $r$ points.
We point out that the original result on SRRQR (cited in Proposition \ref{prop:SRRQR}) only considers ``tall'' matrices of size $m\times n$ with $m\geq n$ and the complexity contains a term of $O(n^3)$. 
Such a complexity will be too pessimistic for the ``short'' matrix $K_{X S_2}^T$ of size $r\times n$ with $r\ll n$ in general.
To obtain a complexity estimate that is nearly optimal in $n$, we present in this section a rigorous analysis of SRRQR for ``short'' matrices.
The result in Proposition \ref{prop:mySRRQR} shows that the complexity of SRRQR is in between $O(r^2 n)$ and $O(r^2 n \log_s n)$ for $r\times n$ matrices with rank $r$.
That is, the complexity is linear or nearly linear in $n$. 
}

\cdf{
\begin{proposition}[Strong Rank-revealing QR Factorization\cite{rrqr96}]
    \label{prop:SRRQR}
    Let $M$ be an $m\times n$ matrix with $m\geq n$.
    The SRRQR of $M$ yields
   $M = Q \begin{bmatrix}
      A_k & B_k \\ 
        & C_k
   \end{bmatrix} \Pi$,
where $Q$ is $m\times m$ orthogonal, $\Pi$ is a permutation matrix, $A_k$ is a well-conditioned $k\times k$ upper triangular matrix with the $i$th ($1\leq i\leq k$) singular value $\sigma_i(A_k)\geq \sigma_i(M)/\sqrt{1+s^2k(n-k)}$, $C_k$ satisfies $\sigma_i(C_k)\leq \sigma_{k+j}(M)\sqrt{1+s^2k(n-k)}$ with $1\leq j\leq n-k$,
$\norm{A_k^{-1}B_k}_{\max}\leq s$.
Here $s>1$ is a user-specified constant.
The complexity for SRRQR is $O(mn^2+n^3\log_s n)$.
\end{proposition}
}

\cdf{
We are interested in applying SRRQR to ``short'' $r\times n$ matrices as described below and the complexity in Proposition \ref{prop:SRRQR} does not reflect the efficiency in the new setting, where $M$ is $r\times n$ with rank $r$. \\
\textbf{Algorithm\cite[Algorithm 4]{rrqr96} for computing the SRRQR in Proposition \ref{prop:mySRRQR}:}
\begin{enumerate}
    \item Compute $R=[A_r, B_r]:=\mathcal{QR}(M)$ and define $\Pi=I$, where $\mathcal{QR}$ denotes the QR factorization;
    \item \texttt{while} $||A_r^{-1} B_r||_{\max} > s$ \texttt{do}
    \item Find $i$, $j$ such that $|(A_r^{-1} B_r)_{i,j}| > s$;
    \item Compute $R=[A_r, B_r]:=\mathcal{QR}(R\Pi_{i,j+r})$ and $\Pi := \Pi\; \Pi_{i,j+k}$, where $\Pi_{i,j+k}$ denotes the permutation that interchanges the $i$th and $j+k$th columns;
    \item \texttt{endwhile}
\end{enumerate}
We analyze SRRQR for ``short'' matrices and prove the nearly optimal complexity of the algorithm above.
The result is summarized in Proposition \ref{prop:mySRRQR}.
A straightforward corollary of Proposition \ref{prop:mySRRQR} gives a stable interpolative decomposition for ``tall'' matrices (Corollary \ref{cor:ID}) that will be used in the one-sided low-rank approximation in Algorithm \ref{alg:fac1}.
}

\cdf{
\begin{proposition}[SRRQR for ``short'' matrices]
    \label{prop:mySRRQR}
    Let $M$ be an $r\times n$ matrix with rank $r$ (thus $r\leq n$).
    The SRRQR of $M$ yields
   $M = Q \begin{bmatrix}
      A_r & B_r
   \end{bmatrix} \Pi$,
where $Q, P, \Pi, A_r, B_r$ are as in Proposition \ref{prop:SRRQR}, with $||A_r^{-1}B_r||_{\max}\leq s$.
Here $s>1$ is a user-specified constant.
The complexity for computing such a factorization is 
$O(n_{\text{iter}} r^2 n)$, where $n_{\text{iter}}$ denotes the number of \texttt{while} loops in Line 2 of the SRRQR algorithm above, and
$n_{\text{iter}}$ is between $O(1)$ and $O(\log_s n)$.
That is, the complexity of SRRQR is between $O(r^2 n)$ and $O(r^2 n \log_s n)$.
\end{proposition}

\begin{proof}
The QR factorization in Line 1: $[A_r, B_r]:=\mathcal{QR}(M)$ for has complexity $O(r^2 n)$.
The number of \texttt{while} loops $n_{\text{iter}}$ is at most $O(\log_s n)$ according to the estimate in [34, Section 4.4].
In fact, the $O(\log_s n)$ estimate is calculated for the \emph{two} nested while loops in \cite[Algorithm 5]{rrqr96} in which $n_{\text{iter}}$ corresponds to the inner while loop.
As a result, the complexity of $n_{\text{iter}}$ must not exceed $O(\log_s n)$.
In the \texttt{while} loop, computing each $||A_r^{-1} B_r||_{\max}$ requires $O(r^2 n)$ complexity, since $A_r$ is triangular and $B_r$ is $r\times (n-r)$.

Next we analyze the complexity for each QR factorization to $\tilde{R}:=R\Pi_{i,j+r}$ in the \texttt{while} loop.
Without loss of generality, we assume $i=1$.
This is because in this case, $\tilde{R}=R\Pi_{1,j+r}$ has the following sparsity pattern (blank entries denote zeros) and QR will be applied to the whole matrix, which results in the highest complexity.
If $i>1$, then the first $i-1$ columns already form an upper triangular matrix and QR is applied to the non-triangular submatrix in the lower right part of $\tilde{R}$ whose size is strictly smaller than $r$-by-$n$.
\begin{equation*}
\tilde{R} :=
\begin{bmatrix}
   * & * & \dots & * & * & \dots & \dots & \dots  \\
   * & * & \dots & \vdots & * & \dots & \dots & \dots  \\
   * &  & \ddots& \vdots & * & \dots & \dots & \dots  \\
   * &  &  & * & * & \dots & \dots & \dots  \\
   * &  &  &    & * & \dots & \dots & \dots 
\end{bmatrix}
\end{equation*}
To compute the QR factorization of $\tilde{R}$ efficiently, 
we apply Householder reflection or Givens rotation to submatrices of row size two in a \emph{bottom-up} fashion, which will reduce the matrix into an upper Hessenberg form.
Then we apply Householder reflection or Givens rotation to the upper Hessenberg form in a \emph{top-down} fashion to obtain an upper triangular matrix, which completes the QR factorization.

In the bottom-up reduction, we first apply Householder reflection or Givens rotation to the last two rows of $\tilde{R}$ to zero out the entry in the bottom left corner (see \eqref{eq:do1}), i.e. entry  $(r,1)$ in $\tilde{R}$.
Note that this will introduce a nonzero entry at $(r,r-1)$, denoted by `$\bullet$' in \eqref{eq:do1}.

\begin{equation}
\label{eq:do1}
\begin{bmatrix}
   * & & & &  & * & * & \dots & \dots & \dots  \\
   * & & & &  &    & * & \dots & \dots & \dots 
\end{bmatrix}
\longrightarrow
\begin{bmatrix}
   * &  &  & * & * & \dots & \dots & \dots  \\
     &  &  & \bullet & * & \dots & \dots & \dots 
\end{bmatrix}
\end{equation}

Applying the same process recursively to the two-row submatrices (rows $k-1, k$) with $k=r-1, r-2, \dots, 3$, 
we obtain an upper Hessenberg form:
\begin{equation}
\label{eq:Hess}
\begin{bmatrix}
   * & * & \dots & * & * & \dots & \dots & \dots  \\
   * & * & \dots & \vdots & * & \dots & \dots & \dots  \\
    & \bullet & \ddots& \vdots & * & \dots & \dots & \dots  \\
    &  & \ddots & * & * & \dots & \dots & \dots  \\
    &  &  &  \bullet  & * & \dots & \dots & \dots 
\end{bmatrix}
\end{equation}
The total complexity of this bottom-up procedure is 
$$O\left((n-r+3)+(n-r+4)+\dots+(n-r+r)\right)=O(rn),$$ 
where the number in each inner parenthesis denotes the number of nonzero columns in each two-row matrix.

Then we reduce the upper Hessenberg form in \eqref{eq:Hess} into an upper triangular matrix by applying Householder reflection or Givens rotation sequentially (in a top-down fashion) to the two-row submatrix (rows $k, k+1$) with $k=1,2,\dots,r-1$, in order to zero out the subdiagonal entries.
Similar to the bottom-up procedure, it is easy to see that the total complexity of the top-down procedure is also $O(rn)$.

Therefore, we see that the each execution in the \texttt{while} loop is donimated by the cost of computing $||A_r^{-1} B_r||_{\max}$, with $O(r^2 n)$ complexity.
The total complexity of the entire algorithm is then 
$$O(r^2 n + n_{\text{iter}} r^2 n ) = O(n_{\text{iter}} r^2 n).$$
Given the fact that $n_{\text{iter}}$ is between $O(1)$ and $O(\log_s n)$,
the complexity of SRRQR is between $O(r^2 n)$ and $O(r^2 n \log_s n).$
\end{proof}

\begin{corollary}
    \label{cor:ID}
    Let $M$ be an $n\times r$ matrix with rank $r$.
    Then $M$ can be factorized via SRRQR as
   $M = P  \begin{bmatrix}
      I \\ 
     G  
   \end{bmatrix} M_1$,
where $P$ is a permutation matrix, $I$ is an identity matrix, $M_1$ is the matrix that consists of the first $r$ rows of $P^T M$, and 
$\norm{G}_{\max}\leq s$.
Here $s>1$ is a user-specified constant.
The computational complexity is at most $O(r^2 n \log_s n)$.
\end{corollary}
\begin{proof}
    Applying SRRQR to the $r\times n$ matrix $M^T$ yields 
\begin{equation}
\label{eq:MT}
M^T = Q
\begin{bmatrix}
    A_r & B_r
\end{bmatrix} \Pi,
\end{equation}
where $Q, \Pi, A_r, B_r$ are matrices in Proposition \ref{prop:mySRRQR}. 
In particular, 
$||A_r^{-1}B_r||_{\max}\leq s$.
Meanwhile, the complexity is at most $O(r^2 n \log_s n)$ according to Proposition \ref{prop:mySRRQR}.

Since $\Pi$ is a permutation matrix, $QA_r$ is a submatrix of $M^T$ containing the first $r$ columns of $M^T \Pi^T$. 
Define $M_1^T = QA_r$.
We see that $M_1$ contains the first $r$ rows of $\Pi M$.
We can then rewrite \eqref{eq:MT} as 
\begin{equation*}
    M^T = QA_r  \begin{bmatrix}
    I & A_r^{-1} B_r
\end{bmatrix} \Pi 
= M_1^T \begin{bmatrix}
    I & A_r^{-1} B_r
\end{bmatrix} \Pi.
\end{equation*}
Transposing both sides yields the desired factorization 
with $P:=\Pi^T$, $G:=A_r^{-1}B_r$, $||G||_{\max}\leq s$.
The complexity is at most $O(r^2 n \log_s n)$ thanks to Proposition \ref{prop:mySRRQR}.
\end{proof}
}

\cdf{
\textit{Remark.}
Note that in the original SRRQR\cite{rrqr96}, the permutation is performed for the smaller dimension, i.e. $n$ columns for a ``tall'' $m\times n$ matrix with $m\geq n$.
In Proposition \ref{prop:mySRRQR} and Corollary \ref{cor:ID}, the permutation is performed over the larger dimension.
This calls for the new complexity analysis in the proof of Proposition \ref{prop:mySRRQR} different from the original estimate in \cite{rrqr96}.
}


\subsection{Algorithm}
\label{sub:one-sided}

The one-sided approximation method consists of two stages.  In the first stage, a subset $S_2\subseteq Y$ is selected using a linear complexity geometric selection scheme (Section \ref{sec:intro}).
In the second stage,
\cdf{
we compute the interpolative decomposition in \eqref{eq:1sided} via applying SRRQR\cite{rrqr96} to guarantee the maximum norm of the column basis matrix is bounded by a prescribed number $s>1$.
More precisely, we apply SRRQR to the ``short'' matrix $K_{X S_2}^T$ and then transpose the output to obtain $K_{X S_2} = P \begin{bmatrix}
    I\\
    G
\end{bmatrix} K_{\mathcal{I}_1 S_2}$, with $\mathcal{I}_1\subseteq X$, $P$ a permutation matrix and $||G||_{\max}\leq s$.
See Corollary \ref{cor:ID} for a more detailed discussion.
}

The full one-sided compression algorithm is summarized in Algorithm \ref{alg:fac1}.
Notice that in Step 2 of Algorithm \ref{alg:fac1}, SRRQR is only used to obtain a stable factorization of $K_{XS_2}$ and thus \emph{no} approximation error is introduced.

\begin{algorithm}
    \caption{\it Data-driven one-sided compression of $K_{XY}$ with two sets of points $X, Y$}
    \label{alg:fac1}
    \emph{Input:} Datasets $X=\{x_{1},\dots,x_{m}\}$, $Y=\{y_1,\dots,y_n\}\subset \mathbb{R}^d$, kernel function $\kappa$, number of sample points $r$ for $Y$\\
    \emph{Output:} Low-rank approximation $K_{XY}\approx UK_{\iii_1 Y}$ in \eqref{eq:1sided}
        \begin{algorithmic}
        \STATE Apply a linear complexity geometric selection algorithm to $Y$ to generate $r$ sample points $S_2\subseteq Y$
        \STATE Apply \cdf{SRRQR-based ID} to the $m$-by-$r$ kernel matrix $K_{XS_2}$: 
    $K_{XS_2}
    = P \begin{bmatrix}
        I\\ 
        G
    \end{bmatrix}
    K_{\iii_1 S_2}$,
where $I$ is an identity matrix, $\iii_1\subseteq X$, $P$ is a permutation matrix \cdf{that maps the row indices of $I$ to the indices for $\iii_1$ in $X$,} and $\norm{G}_{\max}\leq 2$.
        \STATE Define
    $U = P \begin{bmatrix}
        I\\ 
        G
    \end{bmatrix}$.
        \STATE Return $U, K_{\iii_1 Y}$
    \end{algorithmic}
\end{algorithm}

Compared to purely algebraic methods such as LU, QR, rank-revealing QR, and SVD decompositions, Algorithm \ref{alg:fac1} does not access the full kernel matrix and scales linearly or nearly linearly with respect to the data size.
Compared to the proxy point methods \cite{Gillman2012,xing2020interpolative}, hybrid cross approximation \cite{HCA2005}, Algorithm \ref{alg:fac1} does not require the evaluation of the kernel function outside the given dataset (where the function may not necessarily be defined) and is able to scale to high dimensions.
In terms of numerical stability, Algorithm \ref{alg:fac1} leverages the robustness of algebraic methods to obtain a stable factorization compared to the two-sided approximation.
As we shall see in Section \ref{sec:test}, despite being more stable and more general, the error-time trade-off of the proposed method can be noticeably better than that of existing methods.

In addition to the factorization $K\approx UK_{\iii_1 Y}$, a similar one-sided factorization $K\approx K_{X\iii_2}V^{\ast}$ can be computed by applying Algorithm \ref{alg:fac1} to $K^{\ast}_{XY}$. 
That is, we first select a subset from $X$ and then apply ID to obtain a subset $\iii_2\subseteq Y$. 
\cdf{Both options first apply geometric selection to $X$ or $Y$ to obtain a small submatrix and then apply algebraic factorization to it.
They differ in which data set the geometric selection is applied to, i.e. $X$ or $Y$.
If one set contains significantly more points than the other one (for example, $m\gg n$), for efficiency, it is better to perform geometric selection on the larger set to reduce its size to $O(1)$, so that the following algebraic factorization, which is more expensive than geometric selection, is applied to a submatrix with a smaller dimension $n$-by-$O(1)$ instead of $m$-by-$O(1)$.}

\subsection{Complexity and error analysis}
\label{sub:errorfac1}
In this section, we analyze the complexity and the approximation error of Algorithm \ref{alg:fac1}. 
First, we show that Algorithm \ref{alg:fac1} scales as $O(r^2(m+n))$ for obtaining a rank-$r$ approximation to an $m\times n$ kernel matrix.
\begin{theorem}
Given $X=\{x_i\}_{i=1}^m$, $Y=\{y_i\}_{i=1}^n$ in $\mathbb{R}^d$ and kernel function $\kappa$,
the complexity of Algorithm \ref{alg:fac1} to compute a rank-$r$ approximation $K_{XY}\approx UK_{\iii_1 Y}$ is $O(dr^2(m+n))$.
\end{theorem}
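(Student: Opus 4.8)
The plan is to account for the cost of each of the three steps of Algorithm \ref{alg:fac1} separately and then add them up. The total has two genuinely separate contributions: the geometric selection of $S_2$ from $Y$, and the interpolative decomposition applied to $K_{XS_2}$; the formation of the returned matrix $K_{\iii_1 Y}$ is essentially free since its entries are just kernel evaluations.

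First I would handle Step 1. Section \ref{sec:intro} records that a linear-complexity geometric selection scheme picks $r$ points out of $n$ points in $\mathbb{R}^d$ in either $O(drn)$ time (uniform random sampling or the anchor net method) or $O(dr^2 n)$ time (farthest point sampling). In all cases this is bounded by $O(dr^2 n)$, which is within the claimed budget $O(dr^2(m+n))$. Next, Step 2: forming the $m$-by-$r$ matrix $K_{XS_2}$ requires $mr$ kernel evaluations, each costing $O(d)$ for the kernels of interest, so $O(drm)$; and by the remark following Proposition \ref{prop:SRRQR}, the ID of an $m$-by-$r$ matrix of rank $r$ (here with roles of $n$ and $r$ in the proposition swapped, so it is the ``tall'' dimension $m$ that multiplies $r^2$) costs $O(r^2 m)$ via strong rank-revealing QR. Combining, Step 2 costs $O(dr^2 m)$. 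Step 3 just names $U$, at no additional asymptotic cost, and producing the output $K_{\iii_1 Y}$ takes $rn$ kernel evaluations, i.e.\ $O(drn)$. Summing the bounds from the three steps gives $O(dr^2 n) + O(dr^2 m) + O(drn) = O(dr^2(m+n))$, as claimed.

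There is no real obstacle here; the only points requiring a little care are (i) invoking Proposition \ref{prop:SRRQR} with its dimensions matched correctly to $K_{XS_2}$ (so that the cubic-looking term is $r^2 m$, not $r^2 \cdot r$), and (ii) making explicit that each kernel evaluation $\kappa(x_i, y_j)$ with $x_i, y_j \in \mathbb{R}^d$ costs $O(d)$, which is what produces the factor $d$ throughout and is consistent with the complexity statements quoted for the geometric selection schemes. One should also implicitly assume $r \le \min(m,n)$ so that a rank-$r$ approximation is meaningful and the exact-rank hypothesis of Proposition \ref{prop:SRRQR} applies to $K_{XS_2}$ after the (negligible-probability) degenerate cases are excluded; under that assumption the bound is clean.
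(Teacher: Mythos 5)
Your proposal is correct and follows essentially the same route as the paper's proof: bound the geometric selection step by $O(dr^2 n)$, the ID on the $m$-by-$r$ matrix $K_{XS_2}$ by $O(r^2 m)$, and sum to get $O(dr^2(m+n))$. The extra bookkeeping you include (the $O(d)$ cost per kernel evaluation for forming $K_{XS_2}$ and $K_{\iii_1 Y}$) is a harmless refinement the paper leaves implicit.
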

\begin{proof}
    Compressing a set of $n$ points into $r$ points with any scheme in Section \ref{sec:intro} has a complexity at most $O(dr^2 n)$.
    The cost of applying ID on a $m$-by-$r$ matrix $K_{XS_2}$ is $O(r^2 m)$.
    Therefore, the overall complexity of Algorithm \ref{alg:fac1} is $O(dr^2(m+n))$.
\end{proof}

Next we analyze the approximation error for $K_{XY}\approx UK_{\iii_1 Y}$ computed by Algorithm \ref{alg:fac1}. 
We will see that, different from the two-sided factorization, the error bound for $K_{XY}\approx UK_{\iii_1 Y}$ does not involve the norm of the pseudoinverse of the matrix.

\begin{theorem}
\label{thm:KXYUK1}
    Let $X$ and $Y$ be finite sets in $\mathbb{R}^d$
    and $\kappa(x,y)$ be defined on $X\times Y$. For any non-empty subset $S\subseteq Y$,
    let $K_{XS}$ be decomposed by \cdf{SRRQR-based ID} as
    $K_{XS} = U K_{\iii S} = P \begin{bmatrix} I\\G \end{bmatrix} K_{\iii S}$ with $\Vert G\Vert_{\max}\leq 2$.
    Then 
    \begin{equation}
    \label{eq:KXYUK1}
    \begin{aligned}
        \norm{ K_{XY} - U K_{\iii Y} }_{\max}
        \leq &  
\max_{\substack{x\in X\\ y\in Y}} \min_{v\in S} \left( \abs{\kappa(x,y)-\kappa(x,v)}+\norm{K_{Xy}-K_{Xv}} \right)\\
&+
2r\max_{\substack{x\in \mathcal{I}\\ y\in Y}} \min_{v\in S} \left( \abs{\kappa(x,y)-\kappa(x,v)}+\norm{K_{Xy}-K_{Xv}} \right),
    \end{aligned}
    \end{equation}
where $r=\text{card}(\iii)$.
\end{theorem}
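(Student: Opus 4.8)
The plan is to reduce the one-sided error to the error of the \emph{orthogonal} projection residual onto $\mathrm{col}(K_{XS})$ — for which an entrywise bound is already available from Theorem \ref{thm:kxyerror} — and then to pay a factor proportional to $\|G\|_{\max}\le 2$ and to $r=\mathrm{card}(\iii)$ for replacing the orthogonal projection by the oblique projection induced by the interpolative decomposition.

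\textbf{Step 1 (an algebraic identity).} I would introduce the auxiliary matrix $R_0:=K_{XY}-K_{XS}K_{XS}^+K_{XY}$, the residual of the orthogonal projection of the columns of $K_{XY}$ onto $\mathrm{col}(K_{XS})$. For a matrix $B$ whose rows are indexed by $X$, write $B_{\iii}$ for its submatrix of rows indexed by $\iii$, and set $\Phi(B):=B-UB_{\iii}$; then $\Phi$ is linear and $K_{XY}-UK_{\iii Y}=\Phi(K_{XY})$. The key observation is that $\Phi$ annihilates $K_{XS}K_{XS}^+K_{XY}$: writing $B=K_{XS}C$ with $C=K_{XS}^+K_{XY}$, we have $B_{\iii}=K_{\iii S}C$, so $UB_{\iii}=UK_{\iii S}C=K_{XS}C=B$ by the ID identity $UK_{\iii S}=K_{XS}$. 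By linearity, $K_{XY}-UK_{\iii Y}=\Phi(K_{XY})=\Phi(R_0)=R_0-U(R_0)_{\iii}$.

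\textbf{Step 2 (entrywise reading).} Because the $\iii$-rows of $U=P\begin{bmatrix}I\\G\end{bmatrix}$ form the identity block, the $\iii$-rows of $\Phi(R_0)$ vanish, so $(K_{XY}-UK_{\iii Y})_{xy}=0$ for $x\in\iii$. For $x\in X\setminus\iii$, the corresponding row of $U$ consists of coefficients $(G_{xw})_{w\in\iii}$ (the entries of a row of $G$), hence $|G_{xw}|\le\|G\|_{\max}\le 2$, and $(K_{XY}-UK_{\iii Y})_{xy}=(R_0)_{xy}-\sum_{w\in\iii}G_{xw}(R_0)_{wy}$, so that $\bigl|(K_{XY}-UK_{\iii Y})_{xy}\bigr|\le|(R_0)_{xy}|+2\sum_{w\in\iii}|(R_0)_{wy}|$. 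It then remains to bound the entries of $R_0$: applying Theorem \ref{thm:kxyerror} with $S_1=X$ and the choice $u=x$ (so that $\epsilon_1=\|K_{xS}-K_{xS}\|=0$ and the pseudoinverse term drops) — equivalently, the entrywise estimate underlying the first inequality of Corollary \ref{cor:KXYerror} — gives, for every $x\in X$, $y\in Y$,
\[
|(R_0)_{xy}|=\bigl|\kappa(x,y)-K_{xS}K_{XS}^+K_{Xy}\bigr|\le\min_{v\in S}\bigl(|\kappa(x,y)-\kappa(x,v)|+\|K_{Xy}-K_{Xv}\|\bigr)=:g(x,y).
\]
Combining, for $x\notin\iii$ we obtain $\bigl|(K_{XY}-UK_{\iii Y})_{xy}\bigr|\le g(x,y)+2\sum_{w\in\iii}g(w,y)\le\max_{x'\in X,\,y'\in Y}g(x',y')+2r\max_{w\in\iii,\,y'\in Y}g(w,y')$, while for $x\in\iii$ the left side is $0$ and hence also dominated by the right side. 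Taking the maximum over $(x,y)\in X\times Y$ yields \eqref{eq:KXYUK1}.

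\textbf{Main obstacle.} The only non-routine ingredient is the identity $K_{XY}-UK_{\iii Y}=R_0-U(R_0)_{\iii}$ of Step 1, i.e.\ recognizing that one may substitute the orthogonal-projection residual $R_0$ for $K_{XY}$ inside the ID residual. This substitution is precisely what \emph{decouples} the choice of the near point $v\in S$ across the different rows, and thus produces the claimed form with an independent minimization in each of the two terms: a direct expansion of $K_{XY}-UK_{\iii Y}$ using only the column-exactness $\kappa(x,v)=\sum_{w\in\iii}G_{xw}\kappa(w,v)$ forces a single common $v$ and yields a strictly weaker bound. Everything after Step 1 — the bound $\|G\|_{\max}\le 2$, the crude inequality $\sum_{w\in\iii}|(R_0)_{wy}|\le r\max_{w\in\iii}|(R_0)_{wy}|$, and the invocation of the already-established entrywise estimate for the orthogonal residual — is bookkeeping.
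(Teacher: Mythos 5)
Your proposal is correct and follows essentially the same route as the paper: your identity $K_{XY}-UK_{\iii Y}=R_0-U(R_0)_{\iii}$ is exactly the paper's decomposition $K_{XY}=UK_{\iii Y}+P\begin{bmatrix} I\\G \end{bmatrix}E_2+E_1$ (with $R_0=E_1$ and $(R_0)_{\iii}=-E_2$), and both arguments then bound the residual entrywise via Theorem \ref{thm:kxyerror} with $S_1=X$, $u=x$ (equivalently Corollary \ref{cor:KXYerror}) and pay the factor $2r$ from $\norm{G}_{\max}\leq 2$. Your extra observation that the rows indexed by $\iii$ of the error vanish is a harmless refinement not needed for \eqref{eq:KXYUK1}.
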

\begin{proof}
    We decompose $K_{XY}$ as  
\begin{equation}
\label{eq:KXYdecom}
\begin{aligned}
    K_{XY} &= K_{XS}K_{XS}^+K_{XY} + E_1\quad\;\; \text{with}\quad E_1= K_{XY} - K_{XS}K_{XS}^+K_{XY}\\
    &= P \begin{bmatrix} I\\G \end{bmatrix} K_{\iii S}K_{XS}^+K_{XY} + E_1\\
    &= P \begin{bmatrix} I\\G \end{bmatrix} (K_{\iii Y}+E_2) +  E_1\quad\text{with}\quad E_2=K_{\iii S}K_{XS}^+K_{XY}-K_{\iii Y} \\
    &= P \begin{bmatrix} I\\G \end{bmatrix} K_{\iii Y}+P \begin{bmatrix} I\\G \end{bmatrix}E_2 +  E_1.
\end{aligned}
\end{equation}
According to Corollary \ref{cor:KXYerror},
\begin{equation}
\label{eq:E1UK1}
   \norm{E_1}_{\max} \leq 
\max_{\substack{x\in X\\ y\in Y}} \min_{v\in S} \left( \abs{\kappa(x,y)-\kappa(x,v)}+\norm{K_{Xy}-K_{Xv}} \right).
\end{equation}
Similarly, for $E_2$, we have
\begin{equation}
\label{eq:E2UK1}
\norm{E_2}_{\max}\leq 
\max_{\substack{x\in \mathcal{I}\\ y\in Y}} \min_{v\in S} \left( \abs{\kappa(x,y)-\kappa(x,v)}+\norm{K_{Xy}-K_{Xv}} \right).
\end{equation}
Since $\norm{G}_{\max}\leq 2$  and the row size of $E_2$ is equal to $r=\text{card}(\iii)$, it follows that
\[
    \norm{GE_2}_{\max}\leq 2r\norm{E_2}_{\max}.
\]
Together with \eqref{eq:E2UK1}, \eqref{eq:E1UK1}, and \eqref{eq:KXYdecom}, we deduce the inequality in \eqref{eq:KXYUK1}.
\end{proof}

The estimate in Theorem \ref{thm:KXYUK1} relates the approximation error to the subset $S$. 
\cdf{We remark that in the estimate, $r$ is fixed and $S$ is viewed as a variable since we aim to study how the choice of $S$ affects the low-rank approximation accuracy. This is different from estimates that study how error decays with $r$.}
We show a more geometric characterization of the error bound of Theorem \ref{thm:KXYUK1} in the following theorem. 
This theorem implies that the approximation error depends on the ability of $S$ to capture $Y$, which is similar to the two-sided approximation case described in Theorem \ref{thm:deltaKXYerror}.

\begin{theorem}
\label{thm:KXYdelta}
    Let $X,Y,\kappa,S,\iii$ be given in Theorem \ref{thm:KXYUK1} and let $K_{XY}\approx UK_{\iii Y}$ be the approximation in Theorem \ref{thm:KXYUK1}. Then
\begin{equation}
\label{eq:KXYdelta}
\begin{split}
    \norm{K_{XY}-UK_{\iii Y}}_{\max} 
\leq &
L(X\times Y, X\times S) \delta_{Y,S}+(1+2r)\sqrt{m}L(Y,S)_X \delta_{Y,S}\\ 
&+2r L(\mathcal{I}\times Y, \mathcal{I}\times S) \delta_{Y,S},
\end{split}
\end{equation}
where $m=\text{card}(X), r=\text{card}(\iii)$.
\cdf{
Furthermore, if $\kappa(x,y)$ is Lipschitz continuous over $D_1\times D_2$ containing $X\times Y$ with Lipschitz constant $L$, then 
\begin{equation*}
    \norm{K_{XY}-UK_{\iii Y}}_{\max} 
\leq 
L \delta_{Y,S} + (1+2r)\sqrt{m}L \delta_{Y,S} + 2r L \delta_{Y,S}.
\end{equation*}
}
\end{theorem}

\begin{proof}
The proof is analogous to that of Theorem \ref{thm:deltaKXYerror}.
According to \eqref{eq:kLip}, we deduce that 
\begin{equation*}
\begin{aligned}
\max_{\substack{x\in X\\ y\in Y}} \min_{v\in S} \left( \abs{\kappa(x,y)-\kappa(x,v)}+\norm{K_{Xy}-K_{Xv}} \right)   
&\leq  \max_{\substack{x\in X\\ y\in Y}} \min_{v\in S}
\left( L(X\times Y, X\times S) |y-v|+\sqrt{m}L(Y,S)_X |y-v|  \right) \\
&=\max_{\substack{x\in X\\ y\in Y}} \left( L(X\times Y, X\times S) \dist{y,S}+\sqrt{m}L(Y,S)_X \dist{y,S}  \right)\\
&=L(X\times Y, X\times S) \delta_{Y,S}+\sqrt{m}L(Y,S)_X \delta_{Y,S}.
\end{aligned}
\end{equation*}
Similarly, it can be deduced that 
\begin{equation*}
\max_{\substack{x\in \mathcal{I}\\ y\in Y}} \min_{v\in S} \left( \abs{\kappa(x,y)-\kappa(x,v)}+\norm{K_{Xy}-K_{Xv}} \right)   
\leq L(\mathcal{I}\times Y, \mathcal{I}\times S) \delta_{Y,S}+\sqrt{m}L(Y,S)_X \delta_{Y,S}.
\end{equation*}
Inserting the above two inequalities into \eqref{eq:KXYUK1} completes the proof of \eqref{eq:KXYdelta}.
\cdf{The special case of $\kappa(x,y)$ being Lipschitz follows immediately from \eqref{eq:KXYdelta} and Proposition \ref{prop:L}.}
\end{proof}

From Theorem \ref{thm:KXYdelta}, it is easy to see that smaller $\delta_{Y,S}$ contributes to better approximation and the approximation error is zero if $S=Y$.
Also, we see that the smoother the kernel function is (small Lipschitz constant), the more accurate the low-rank approximation will be. 
This is consistent with the fact that smooth kernel functions yield kernel matrices with rapidly decaying singular values.

Compared to the error estimates in Theorem \ref{thm:KXYerror} and Theorem \ref{thm:deltaKXYerror} for the \emph{two-sided} factorization, the estimates for the \emph{one-sided} factorization in Theorem \ref{thm:KXYUK1} and Theorem \ref{thm:KXYdelta} appear to be better since they do not contain the norm of any matrix, for example, the possibly large factor $\norm{K_{S_1S_2}^+}$ in Theorem \ref{thm:KXYerror} and Theorem \ref{thm:deltaKXYerror}.
This factor disappears when only \emph{one} geometric selection is performed (for either rows or columns), as shown in Corollary \ref{cor:KXYerror} and Theorem \ref{thm:KXYUK1}.

\subsection{The symmetric case} 
\label{sub:Special case}
In this section, we consider a variant of the approximation \eqref{eq:1sided} when the kernel matrix $K_{XX}=[\kappa(x,y)]_{x,y\in X}$ is associated with \emph{one} set of points $X$ and a symmetric kernel $\kappa(x,y)$.  
This type of kernel matrix arises frequently as covariance or correlation matrices in statistics and machine learning.
In order to preserve the symmetry of $K_{XX}$, we compute a symmetric factorization of the form
\begin{equation}
\label{eq:symfac}
K_{XX}\approx UK_{\iii\iii}U^{T} \quad\text{with}\quad \iii\subseteq X
\end{equation}
whose structure-preserving properties are shown in the next proposition. 
This is the symmetric version of the ``double-sided ID'' \cite{martinsson_tropp_2020}.

\begin{proposition}
    \label{prop:symmetric}
    If $K_{XX}$ is symmetric, then the low-rank approximation $UK_{\iii\iii}U^{T}$ in \eqref{eq:symfac} is also symmetric.
    If $K_{XX}$ is assumed to be positive semi-definite, then $UK_{\iii\iii}U^{T}$ is also positive semi-definite.
\end{proposition}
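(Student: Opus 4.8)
The plan is to reduce both assertions to two elementary facts: (i) a principal submatrix of a symmetric (resp.\ positive semi-definite) matrix is again symmetric (resp.\ positive semi-definite), and (ii) the map $B\mapsto UBU^T$ preserves symmetry and positive semi-definiteness for \emph{any} (possibly rectangular) matrix $U$. The first step is to observe that, since $\iii\subseteq X$ and the factorization \eqref{eq:symfac} uses the \emph{same} index set $\iii$ for the rows and columns of $K_{\iii\iii}=[\kappa(x,y)]_{x,y\in\iii}$, this matrix is exactly the principal submatrix of $K_{XX}$ obtained by restricting to the entries indexed by $\iii\times\iii$. Consequently $\kappa(x,y)=\kappa(y,x)$ gives $K_{\iii\iii}^T=K_{\iii\iii}$, and if $K_{XX}\succeq 0$ then $K_{\iii\iii}\succeq 0$ as well: for any $w$ one extends $w$ by zeros to a vector $\tilde w$ indexed by $X$ and gets $w^T K_{\iii\iii} w=\tilde w^T K_{XX}\tilde w\ge 0$.

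Given this, the symmetry claim follows by the direct computation $(UK_{\iii\iii}U^T)^T=U K_{\iii\iii}^T U^T=U K_{\iii\iii}U^T$, using $K_{\iii\iii}^T=K_{\iii\iii}$. For the positive semi-definiteness claim, I would take an arbitrary vector $z$ (of the appropriate length) and write $z^T U K_{\iii\iii}U^T z=(U^Tz)^T K_{\iii\iii}(U^Tz)\ge 0$, where the inequality is exactly the positive semi-definiteness of $K_{\iii\iii}$ established above. This yields $UK_{\iii\iii}U^T\succeq 0$, completing the argument.

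There is essentially no obstacle: the statement is just the congruence-invariance of symmetry and positive semi-definiteness, and $U$ being rectangular is harmless since neither argument uses invertibility of $U$. The only point that warrants a sentence of care is the identification of $K_{\iii\iii}$ as a genuine principal submatrix of $K_{XX}$ — i.e.\ that the same set $\iii$ labels both its rows and columns — which is precisely the structural feature that makes \eqref{eq:symfac} a symmetric factorization, in contrast to the generally non-symmetric blocks $K_{\iii S}$ appearing elsewhere in the one-sided construction.
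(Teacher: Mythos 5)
Your proof is correct and follows essentially the same route as the paper's: identify $K_{\iii\iii}$ as a principal submatrix of $K_{XX}$ (hence symmetric, resp.\ positive semi-definite) and then observe that the congruence $B\mapsto UBU^T$ preserves these properties. You merely spell out the zero-extension and quadratic-form steps that the paper leaves implicit.
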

\begin{proof}
    Since $\iii\subseteq X$, $K_{\iii\iii}$ is a principal submatrix of $K_{XX}$.
    If $K_{XX}$ is symmetric, $K_{\iii\iii}$ is also symmetric, which implies that $UK_{\iii\iii}U^T$ is symmetric. 
    
    If $K_{XX}$ is positive semi-definite,
    then $K_{\iii\iii}$ is also positive semi-definite since it is a principal submatrix of $K_{XX}$.
    As a result, $UK_{\iii\iii}U^T$ is symmetric positive semi-definite.
\end{proof}

The symmetric factorization in \eqref{eq:symfac} is a straightforward extension of the one-sided factorization and the algorithm is summarized in Algorithm \ref{alg:fac0}. 
\begin{algorithm}
    \caption{\it Data-driven compression of $K_{XX}$ with one set of points $X$}
    \label{alg:fac0}
    \emph{Input:} Dataset $X=\{x_{1},\dots,x_{n}\}\subset \mathbb{R}^d$, kernel function $\kappa$, number of sample points $r$\\
    \emph{Output:} Low-rank approximation $K_{XX}\approx UK_{\iii\iii}U^T$ 
        \begin{algorithmic}
        \STATE Apply a linear complexity geometric selection algorithm to $X$ to generate $r$ sample points $S$
        \STATE Apply \cdf{SRRQR-based ID} to the $n$-by-$r$ kernel matrix $K_{XS}$:
\begin{equation}
    \label{eq:QR1}
    K_{XS} = [\kappa(x,y)]_{\substack{x\in X\\y\in S}} 
    = P \begin{bmatrix}
        I\\ 
        G
    \end{bmatrix}
    K_{\iii S},
\end{equation}
where $\iii\subseteq X$, $P$ is a permutation matrix \cdf{that maps the row indices in $I$ to the indices for $\iii$ in $X$,} and $\norm{G}_{\max}\leq 2$    
        \STATE Define 
        $U = P \begin{bmatrix}
        I\\ 
        G
    \end{bmatrix}$
        \STATE Return $U, K_{\iii\iii}$
    \end{algorithmic}
\end{algorithm}


\section{Numerical experiments} 
\label{sec:test}

In this section, we illustrate the data-driven geometric approach using
both low- and high-dimensional data.  All experiments were conducted in
MATLAB R2021a on a MacBook Pro with Apple M1 chip and 8GB of RAM.

\subsection{Data on a manifold in three dimensions}
\label{sub:low}
For data in low dimensional ambient space, e.g., $d=3$, there exist
several effective methods for compressing kernel matrices.  However,
their efficiency may decrease when the separation between the sets
$X$ and $Y$ decreases and when the data lies on a manifold rather
than be distributed relatively uniformly in the ambient space.
To illustrate the advantages of the geometric approach in these
cases, we use a sequence of three datasets as illustrated in 
Figure \ref{fig:3Dsets}.  In each dataset, $X$ and $Y$ each contain 1400 points, with 400 on each small cube and 600 on the hemisphere in Figure \ref{fig:3Dsets}.
From Dataset 1 to 3, $Y$ is a vertical shift of $X$ by 2.7,
2, and 0.5, respectively.  The minimum distance between points in $X$
and points in $Y$ from Datasets 1 to 3 is 1, 0.43, 0.12, respectively.  The smallest
bounding boxes for $X$ and $Y$ are well-separated in Dataset 1, adjacent
in Dataset 2, and overlapping in Dataset 3.
With these data, kernel matrices were constructed using the
kernel function $\kappa(x,y)={1}/{|x-y|}$.

\begin{figure}[htbp] 
    \centering 
     \subfigure[Dataset 1]{\label{fig:3Dset1}\includegraphics[scale=0.30]{./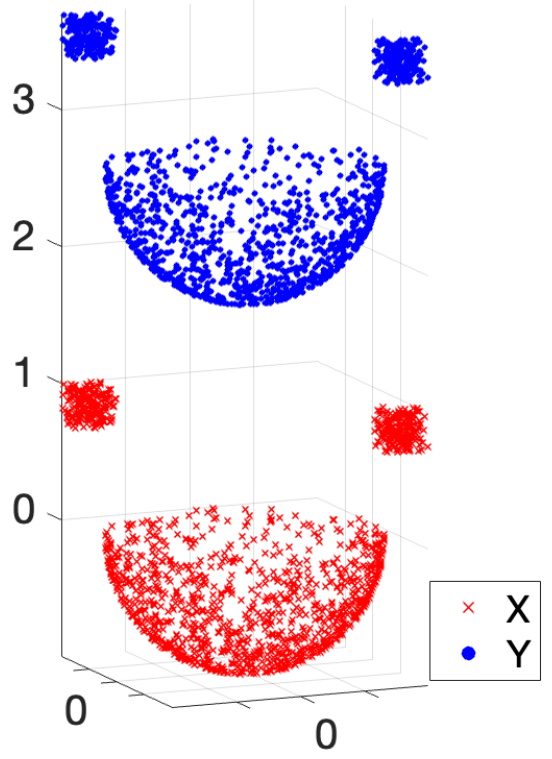}}
    \hspace{0.2in}
     \subfigure[Dataset 2]{\label{fig:3Dset2}\includegraphics[scale=0.30]{./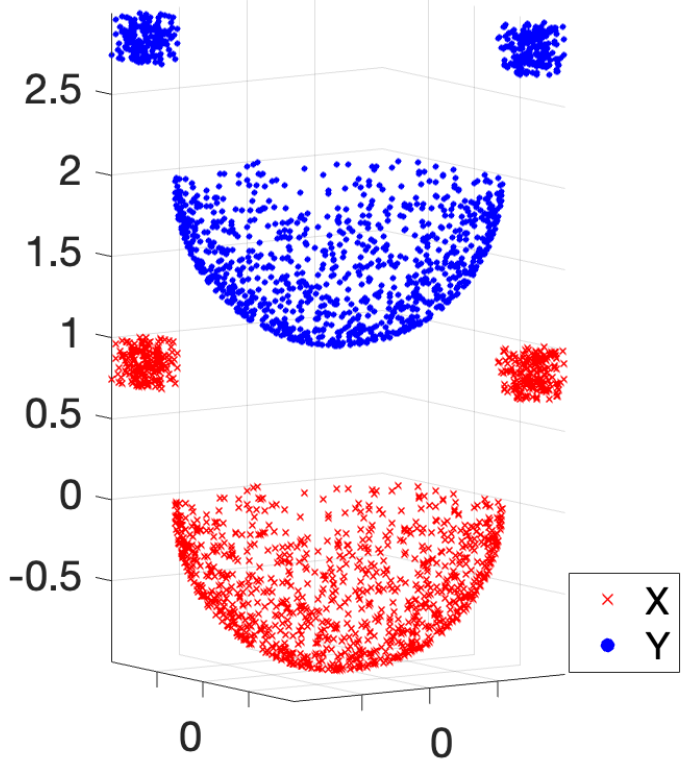}}
    \hspace{0.2in}
     \subfigure[Dataset 3]{\label{fig:3Dset3}\includegraphics[scale=0.30]{./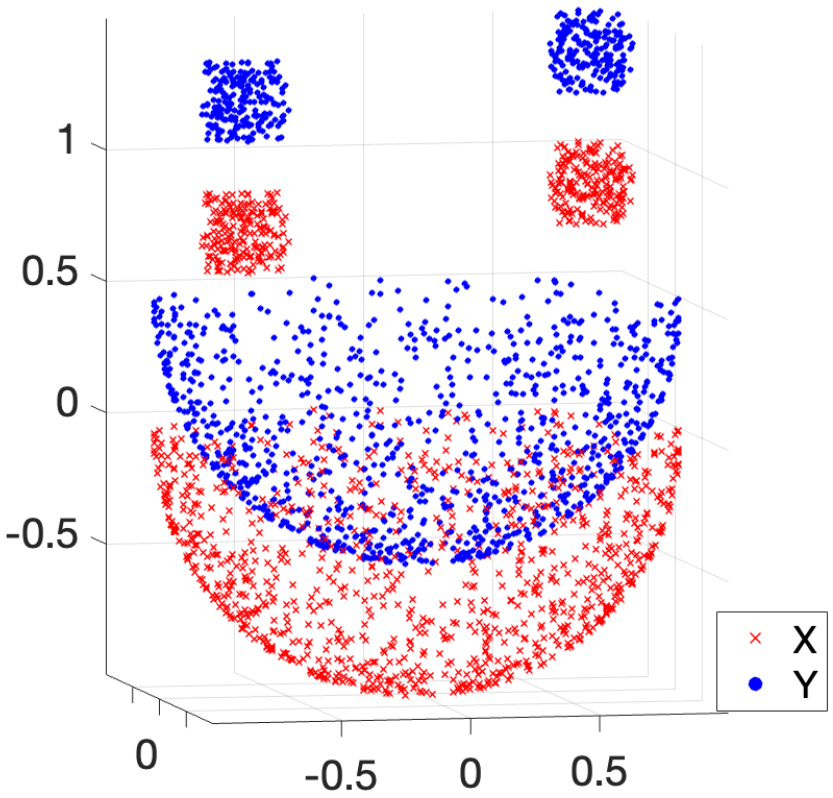}}
    \caption{Sequence of three datasets in three dimensions. From Datasets 1 to 3, $Y$ is a vertical shift of $X$ by 2.7,
2, and 0.5, respectively.  The minimum distance between points in $X$
and points in $Y$ from Dataset 1 to 3 is 1, 0.43, 0.12, respectively. }
    \label{fig:3Dsets} 
\end{figure}

\textbf{Test 1. Robustness with respect to data geometries.}
For above the settings,
we compare the approximation error of the data-driven geometric approach
with that of an algebraic method, ACA \cite{bebendorf2003ACA},
and proxy point method (`proxy') \cite{xing2020interpolative}.
For the data-driven method (`DD'), we construct
a one-sided factorization (Algorithm \ref{alg:fac1}) using farthest point sampling with sample size at most $2r$ for a rank-$r$ approximation. 
\cdf{Namely, $2r$ points are chosen for $S_2$ and SRRQR is applied to $K_{X S_2}$.}
For the proxy point method, the sample size is 2000 for $\Omega_X$ (the smallest bounding box containing $X$) and 10000 for $\Omega_Y$, independent of the approximation rank.
Figure \ref{fig:3DsetsError} shows, for the various
methods, the relative matrix approximation error as a function of the rank of the approximation.  
The relative error is defined as ${||K-\tilde{K}||}/{||K||}$, where $\tilde{K}$ denotes the low-rank approximation to $K$ and $||\cdot||$ denotes the 2-norm. 
The optimal relative approximation error as computed by the SVD is also shown.

\begin{figure}[htbp] 
    \centering 
    \subfigure[Dataset 1]{\label{fig:3Dset1Error}\includegraphics[scale=.26]{./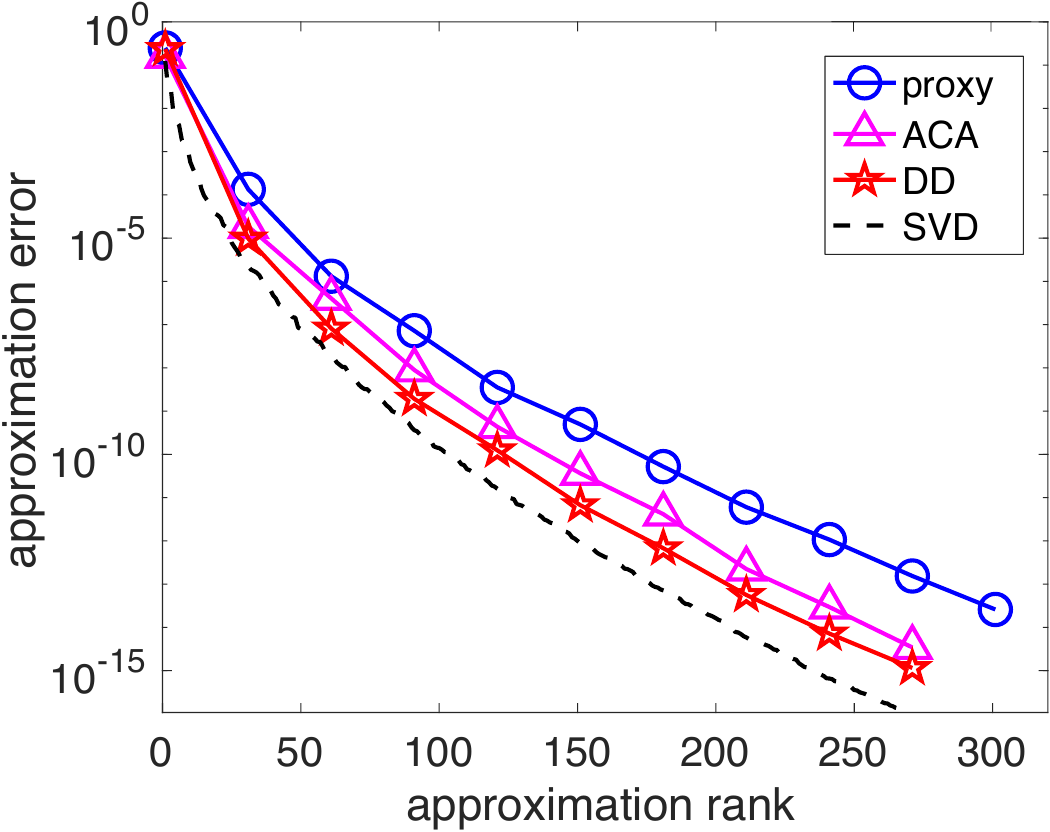}}
    \hspace{0.2in}
    \subfigure[Dataset 2]{\label{fig:3Dset2Error}\includegraphics[scale=.26]{./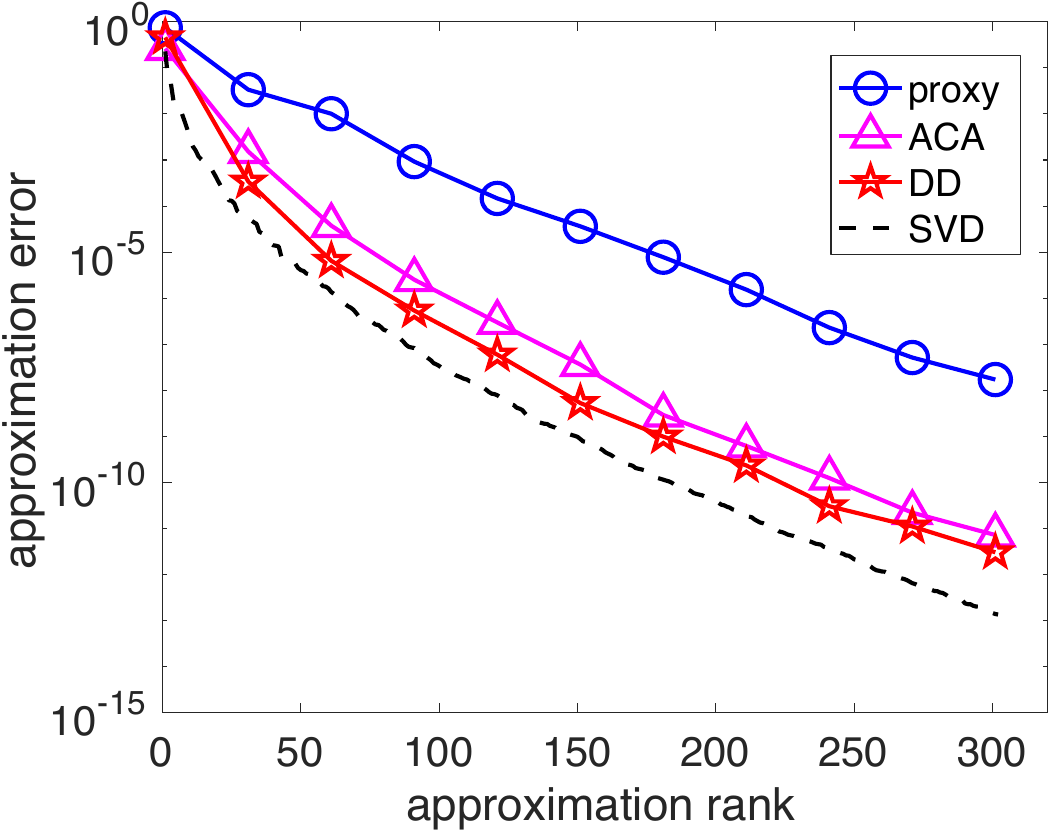}}
    \hspace{0.2in}
    \subfigure[Dataset 3]{\label{fig:3Dset3Error}\includegraphics[scale=.26]{./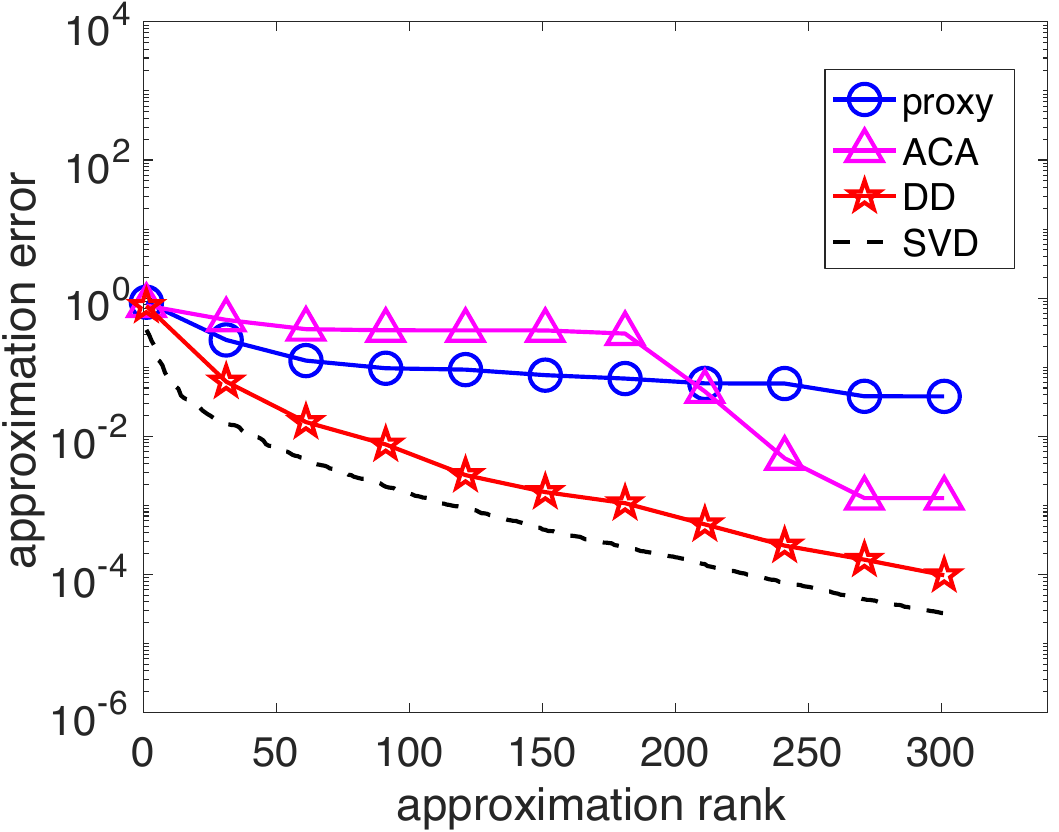}}
    \caption{Accuracy comparison of different methods for constructing low-rank factorizations on the kernel matrices defined by Datasets 1, 2, and 3 shown in Figure \ref{fig:3Dsets} and the kernel function $\kappa(x,y)={1}/{|x-y|}$.}
    \label{fig:3DsetsError}
\end{figure}

We observe that all methods are effective for Dataset 1, with the data-driven method - DD - being the most efficient and most closely tracking the SVD approximation error. 
We remark that \cdf{the large number of random samples in the bounding box} is not effectively used in proxy point method when the data is unstructured.
Hence it is computationally more expensive than DD and ACA in this experiment.

For Dataset 2, we are at the boundary at which hybrid methods
are effective, i.e., those methods that assume a separation of the
bounding boxes for $X$ and $Y$. However, DD and ACA still closely track
the SVD approximation error.

For Dataset 3, the points in $X$ and $Y$ are actually ``intermingled''
(overlapping bounding boxes). ACA might not effectively sample $X$ and
$Y$ in this dataset, especially since the dataset contains disjoint
clusters (points on a half-shell and points in small cubes). However,
DD continues to closely track the SVD approximation error.



\subsection{Data in high-dimensional ambient space}
\label{sub:high}

The data-driven geometric approach can be efficient for data in 
high-dimensional ambient space, whereas many other existing low-rank compression
methods have cost that is exponentially dependent on the dimensionality
of ambient space. To demonstrate the data-driven approach for high-dimensional
data, we use two datasets from the UCI machine learning 
repository:\footnote{\url{https://archive.ics.uci.edu/ml/index.php}}
Covertype ($n=581,012$, $d=54$) and Gas Sensor Array Drift ($n=13,910$, $d=128$).
Each dataset is standardized to have mean zero and variance along each dimension
equal to one.
Instead of using the entire datasets, we choose $X$ and $Y$ to be
two subsets of random samples, selected without replacement,
with 8000 points for $X$ and 10000 points for $Y$.

For both datasets,
the kernel matrix $K_{XY}$ is a $8000$-by-$10000$ matrix associated with
the Gaussian kernel $\kappa(x,y)=\exp(-|x-y|^2 / \sigma^2)$.  Since the
bandwidth parameter $\sigma$ controls the smoothness of the kernel,
we consider $\sigma$ from among three values, denoted as $\sigma_1, \sigma_2, \sigma_3$ and chosen to be 100\%, 50\%, 25\% of the radius
of $X$, respectively.

\textbf{Test 2. Different geometric selection schemes.}
We first examine the effect of different geometric selection schemes (see Section \ref{sec:intro}) used to construct the two-sided low-rank factorization \eqref{eq:2sided}.  
It has been observed in \cite{anchornet} that the approximation error can be extremely large if the subset is not chosen properly.
In this case, to deal with the pseudoinverse, a stable implementation proposed in \cite{yuji20} was used in \cite{anchornet}:
\begin{equation}
\label{eq:nysQR}
    K_{XY}\approx (K_{XS_2}R_{\epsilon}^+)( Q^T K_{S_1Y}),
\end{equation}
where  $K_{S_1S_2}=QR$ is the QR factorization of $K_{S_1S_2}$ and $R_{\epsilon}$ is derived from $R$ by truncating singular values smaller than $\epsilon$ in the SVD.
It is also noted in \cite{anchornet} that the above stabilization is \emph{not} needed if the subset is well-chosen, i.e. spread evenly over the data.
\cdf{For a detailed numerical study on the effect of stabilization, we refer to \cite[Section 5.2]{anchornet}.}
To make a fair comparison of different selection schemes,
the stabilization in \eqref{eq:nysQR} is used in this experiment.
Namely, in Algorithm \ref{alg:fac2}, $K_{S_1 S_2}$ is replaced with its truncated QR factors: $Q$ and $R_{\epsilon}^+$.

For these experiments, we use the Gas Sensor dataset with a Gaussian kernel
with $\sigma=\sigma_1\approx 307.52$.

Figure \ref{fig:d128sig1DD2subset} shows the low-rank approximation error (relative error as in Figure \ref{fig:3DsetsError}) vs.\ approximation rank when different geometric selection schemes are used.  
We compare the following schemes: anchor net (`ANC'), farthest point sampling (`FPS'), uniform random sampling (`Unif'),
as well as mixtures of uniform random sampling and FPS (some points are generated by FPS, the rest by random sampling).
In the mixed scheme, random sampling is used to reduce the cost of FPS and we use the experiment to observe the effect of augmenting FPS samples with random samples.
In these cases, the mixtures are denoted `mixed1', `mixed2', `mixed3',
for 5\%, 10\%, 50\% FPS samples with the remainder of the samples selected by uniform random sampling.
We observe that ANC and FPS perform similarly. These methods have a clear advantage over pure random sampling, suggesting that the data has structure to be exploited that is hidden from pure random sampling. However, random sampling can reduce the cost of a pure FPS method.
The accuracy of ANC and FPS is attributed to the use of evenly spaced points.
The generation of evenly distributed points is studied in discrepancy theory for the unit cube (cf. \cite{QMC1992book,UD2012book}) and recently extended to general geometries using deep neural networks (cf. \cite{dcJCP2022,autm}).
We also note that the approximation error for `DD2-Unif' does not improve after approximation rank about 50, creating the ``flat'' portion in the plot.
In fact, this is due to the stabilization \eqref{eq:nysQR} that prevents the approximation error from ``blowing up''.

\begin{figure}[htbp]
\centering
\includegraphics[scale=.32]{./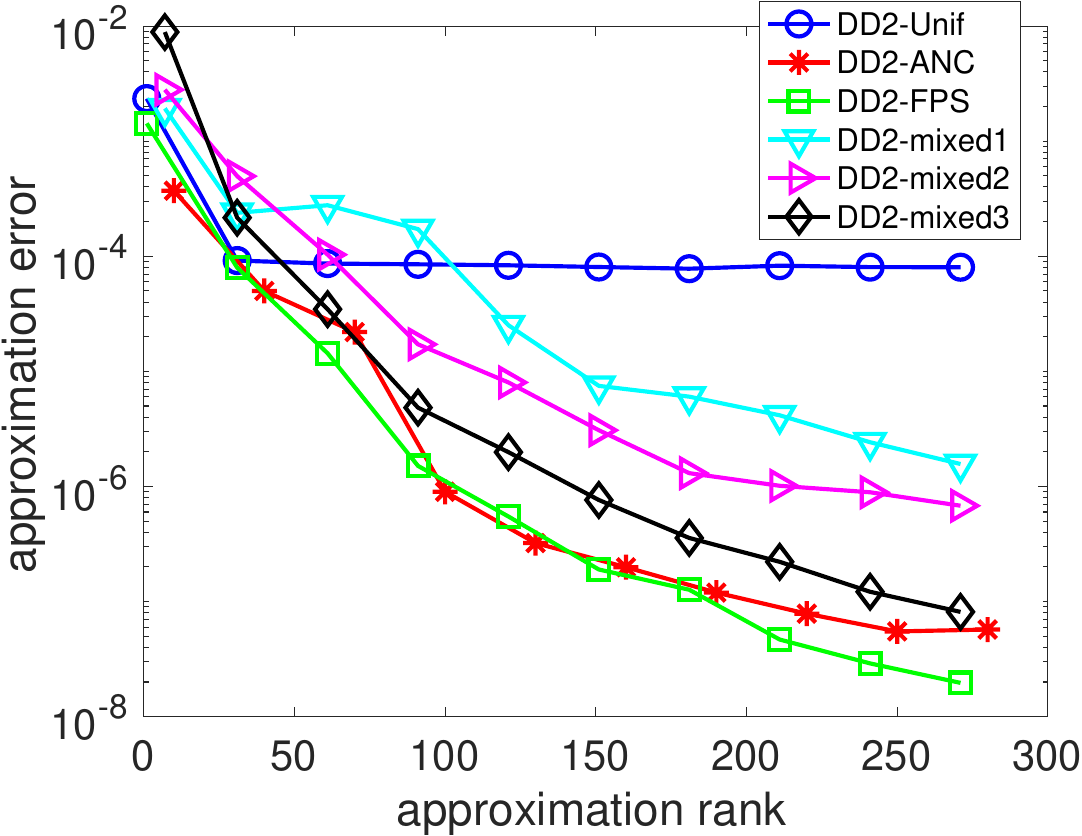}
\caption{Accuracy comparison of different geometric selection schemes
for constructing two-sided data-driven low-rank factorizations on the kernel matrix defined by the Gas Sensor dataset (d = 128) and
a Gaussian kernel with the bandwidth $\sigma_1 \approx 307.5$.
}
\label{fig:d128sig1DD2subset}
\end{figure}

\textbf{Test 3. One-sided vs.\ two-sided low-rank factorization.}
With the same dataset and kernel as immediately above, we compared
the approximation error for one-sided and two-sided low-rank factorization.
Figure \ref{fig:d128sig1DDcompare} shows these results.
The one- and two-sided cases are denoted as `DD1' and `DD2', respectively,
and each is tested with `Unif', `ANC', and `FPS' geometric selection.

We observe that one-sided factorization is generally more accurate.
This is consistent with the theoretical results in Theorem
\ref{thm:KXYerror} and Theorem \ref{thm:KXYUK1}, where the two-sided
approximation error estimate contains the norm of a pseudoinverse
matrix while the one-sided approximation estimate doesn't contain
any matrix norm.  

We notice again the stagnating accuracy for `DD2-Unif' when the approximation rank is larger than 50. 
On the contrary, ``DD1-Unif'' gives effective error reduction as the approximation rank increases.
The difference reveals the fact that the two-sided factorization is \emph{not} as numerically stable as the one-sided factorization.
It should be noted, however, that the one-sided factorization is more expensive to compute than the two-sided factorization.
The one-sided factorization uses geometric selection on only $X$ or only $Y$ rather than both $X$ and $Y$
and applies algebraic compression to a much larger intermediate matrix
than the two-sided factorization.

\begin{figure}[htbp]
\centering
\includegraphics[scale=.32]{./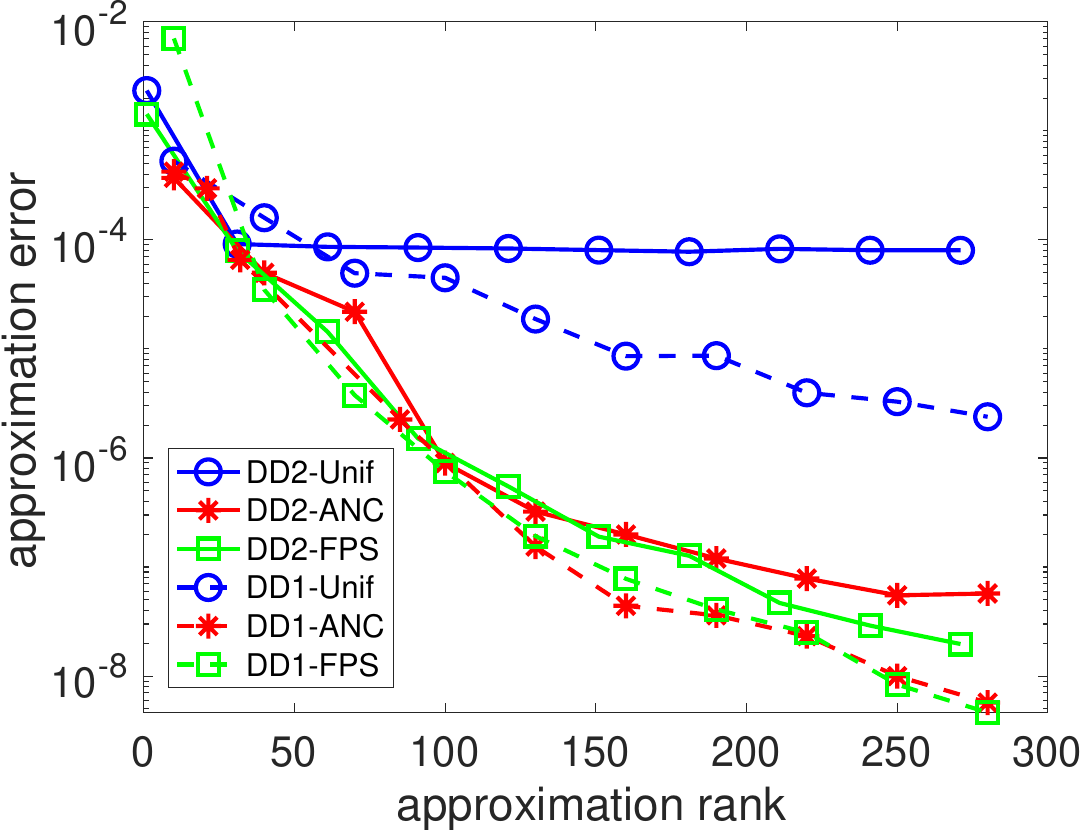}
\caption{Accuracy comparison of one-sided (dashed lines) vs.\ two-sided (solid lines) data-driven factorizations on the kernel matrix defined by the Gas Sensor dataset (d = 128) and
a Gaussian kernel with the bandwidth $\sigma_1 \approx 307.5$.}
\label{fig:d128sig1DDcompare}
\end{figure}

\textbf{Test 4. Comparison with ACA and robustness with respective to kernel parameters.}
There exist a few low-rank compression algorithms that are both efficient in the high-dimensional case and able to handle intermingled data. 
One notable method is the ACA method \cite{bebendorf2003ACA}, which does not require access to the full kernel matrix, has linear complexity, and produces a one-sided factorization.  We thus now compare the data-driven geometric approach
with ACA.  
Specifically, we use the data-driven approach to compute
a one-sided low-rank factorization with ANC and FPS geometric selection schemes, corresponding to ``DD-ANC'' and ``DD-FPS'' in Figures \ref{fig:d54error} to \ref{fig:d128time}.

Figures \ref{fig:d54error} and \ref{fig:d128error} show the approximation errors for the Covertype and Gas Sensor datasets, respectively, each with three values of bandwidth $\sigma$ for the Gaussian kernel.  
For the Covertype dataset, the data-driven methods yield much better accuracy than ACA for all choices of $\sigma$.
The accuracy of ACA stagnates as the approximation rank is increased, suggesting that clusters in the dataset have prevented ACA from selecting rows and columns that help represent the kernel matrix.

For the Gas Sensor dataset, all methods behave similarly for a
large bandwidth $\sigma_1$. For a smaller bandwidth $\sigma_3$,
ACA displays stagnation in accuracy for approximation rank
greater than 100. The smaller bandwidth makes the Gaussian kernel
less smooth, and accentuates the effect of clusters in the data.
These issues in ACA have been explored previously \cite{HCA2005,darve2019}. 
In general, these issues reflect the challenge in approximating
kernel matrices associated with high ambient dimensions but possibly
lower dimensional structures within these dimensions.

Figure \ref{fig:d128time} shows timings for computing the low-rank
approximations.  Although our timings are limited to MATLAB execution,
the results indicate that a geometric approach using a fast
geometric selection scheme (e.g., ANC) can be faster than ACA
for the same approximation error. The results also suggest that
ANC is significantly faster than FPS.


\begin{figure}[htbp]
    \centering
    \subfigure[$\sigma=\sigma_1\approx 74.46$]{\label{fig:d54sig1error}\includegraphics[scale=.26]{./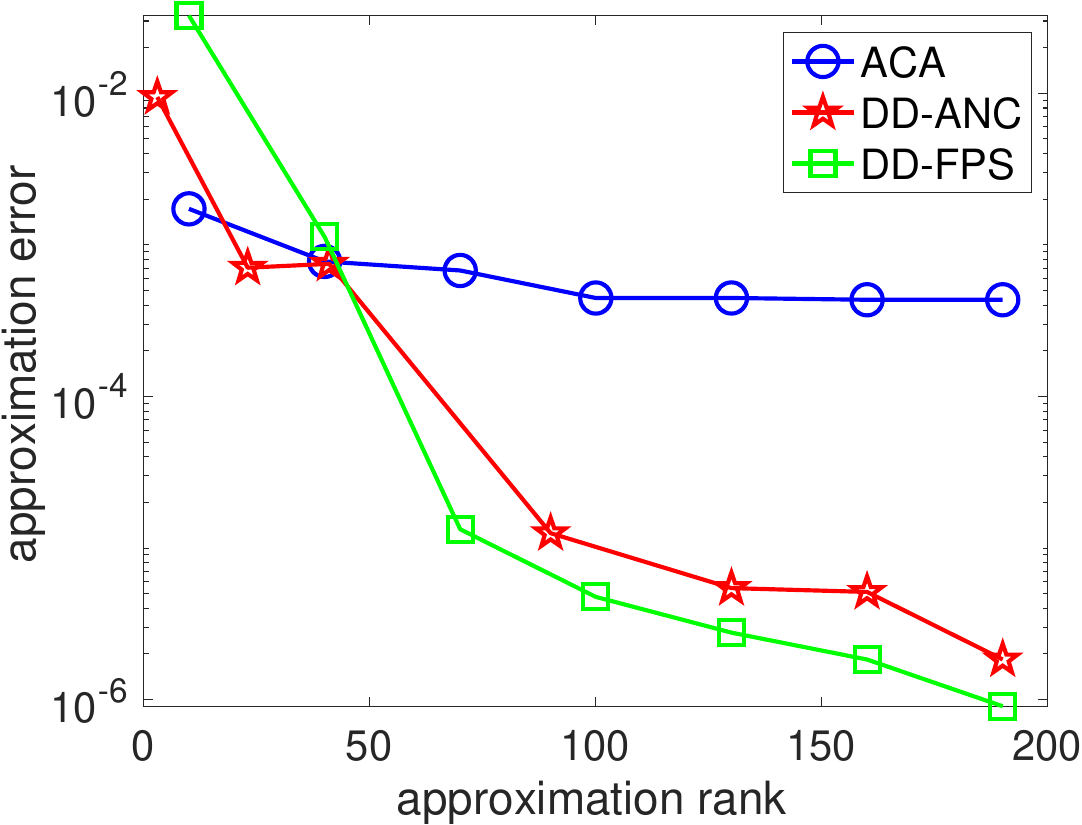}}
    \subfigure[$\sigma=\sigma_2\approx 37.23$]{\label{fig:d54sig2error}\includegraphics[scale=.26]{./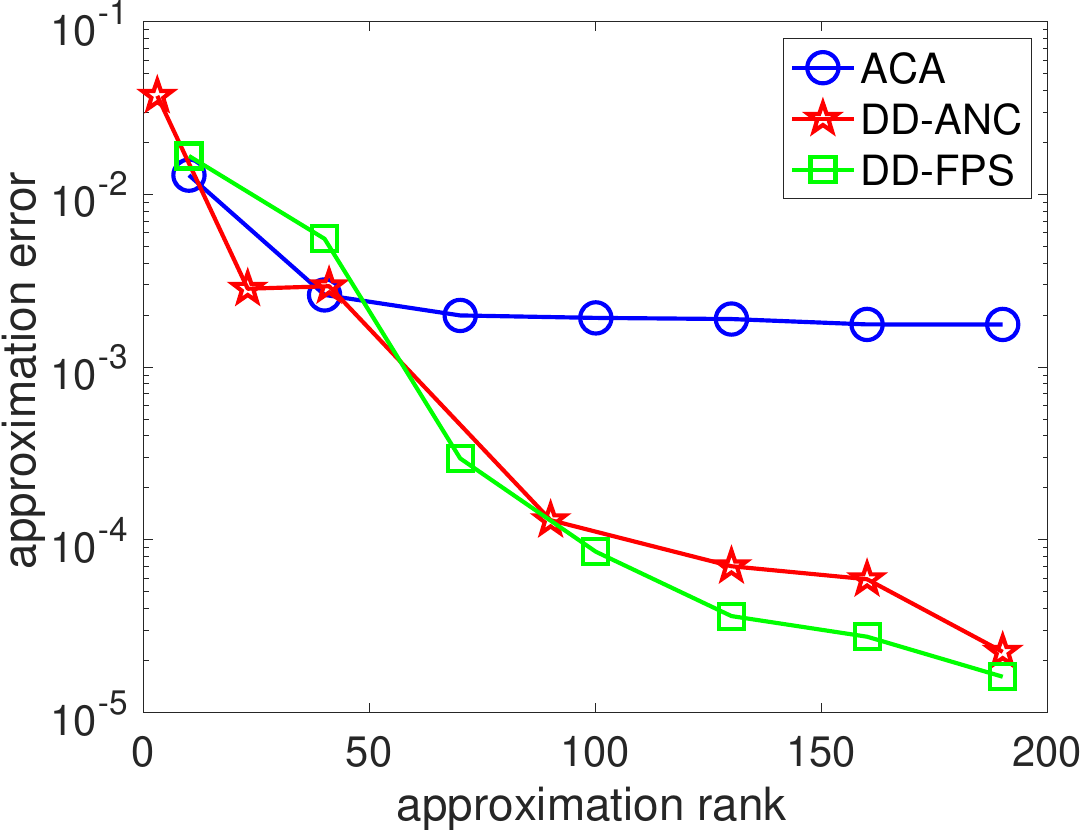}}
    \subfigure[$\sigma=\sigma_3\approx 14.89$]{\label{fig:d54sig3error}\includegraphics[scale=.26]{./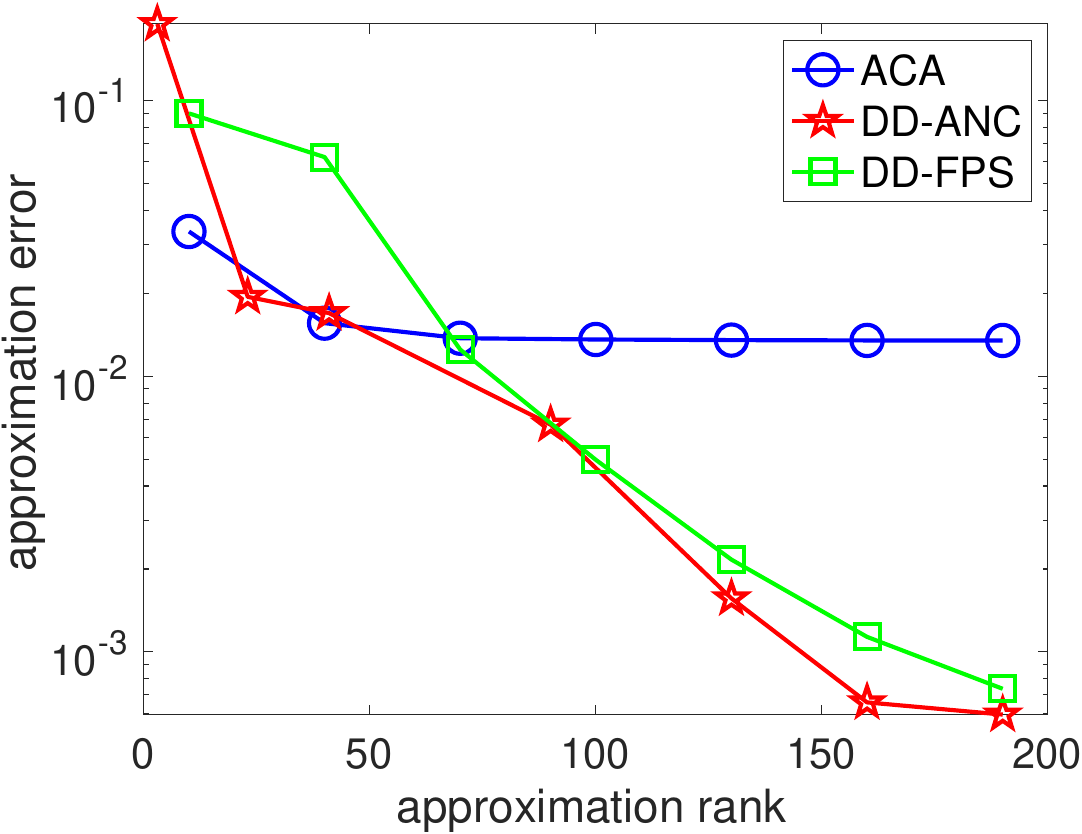}}
    \caption{Accuracy comparison of one-sided data-driven factorizations (DD-ANC and DD-FPS) with ACA on  kernel matrices defined by the Covertype dataset (d=54) and  Gaussian kernel with three different bandwidths $\sigma$.}
    \label{fig:d54error} 
\end{figure}


\begin{figure}[htbp]
    \centering
    \subfigure[$\sigma=\sigma_1\approx 307.52$]{\label{fig:d128sig1error}\includegraphics[scale=.26]{./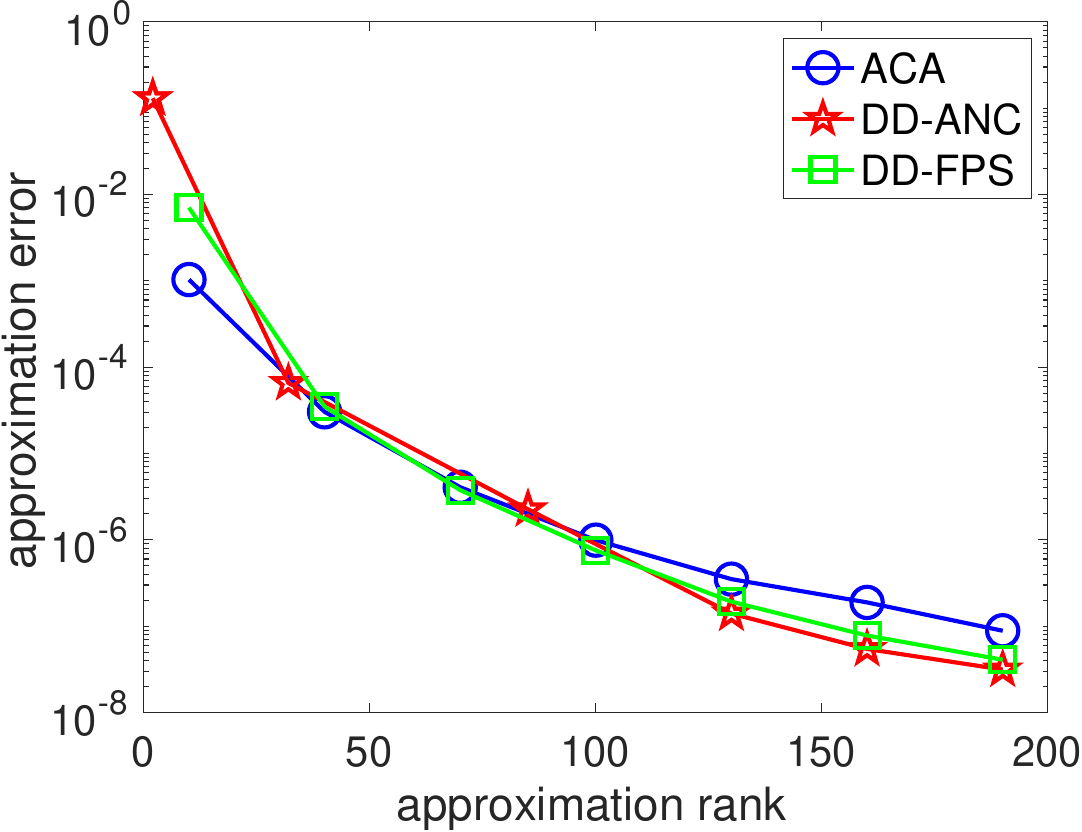}}
    \subfigure[$\sigma=\sigma_2\approx 153.76$]{\label{fig:d128sig2error}\includegraphics[scale=.26]{./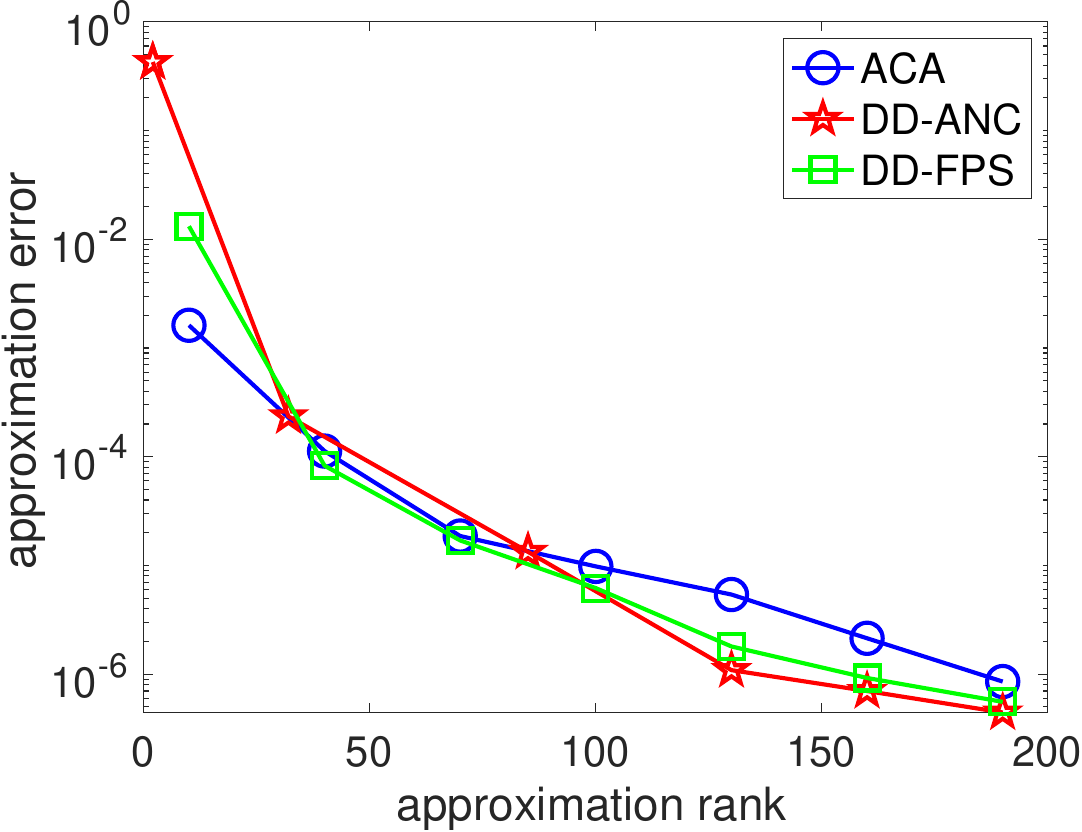}}
    \subfigure[$\sigma=\sigma_3\approx 61.50$]{\label{fig:d128sig3error}\includegraphics[scale=.26]{./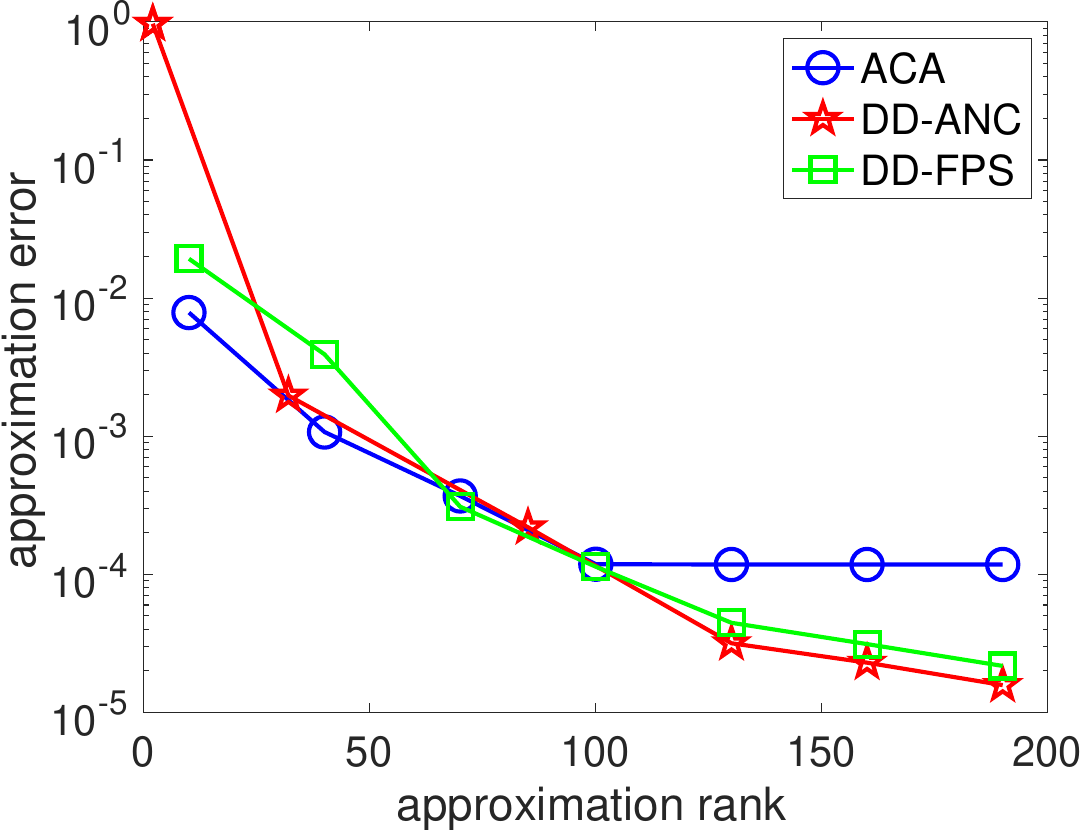}}
    \caption{Accuracy comparison of one-sided data-driven factorizations (DD-ANC and DD-FPS) with ACA on the kernel matrices defined by the Gas Sensor dataset (d=128) and the Gaussian kernel with three different bandwidths $\sigma$.}
    \label{fig:d128error}
\end{figure}

\begin{figure}[htbp]
    \centering
    \subfigure[$\sigma=\sigma_1\approx 307.52$]{\label{fig:d128sig1time}\includegraphics[scale=.26]{./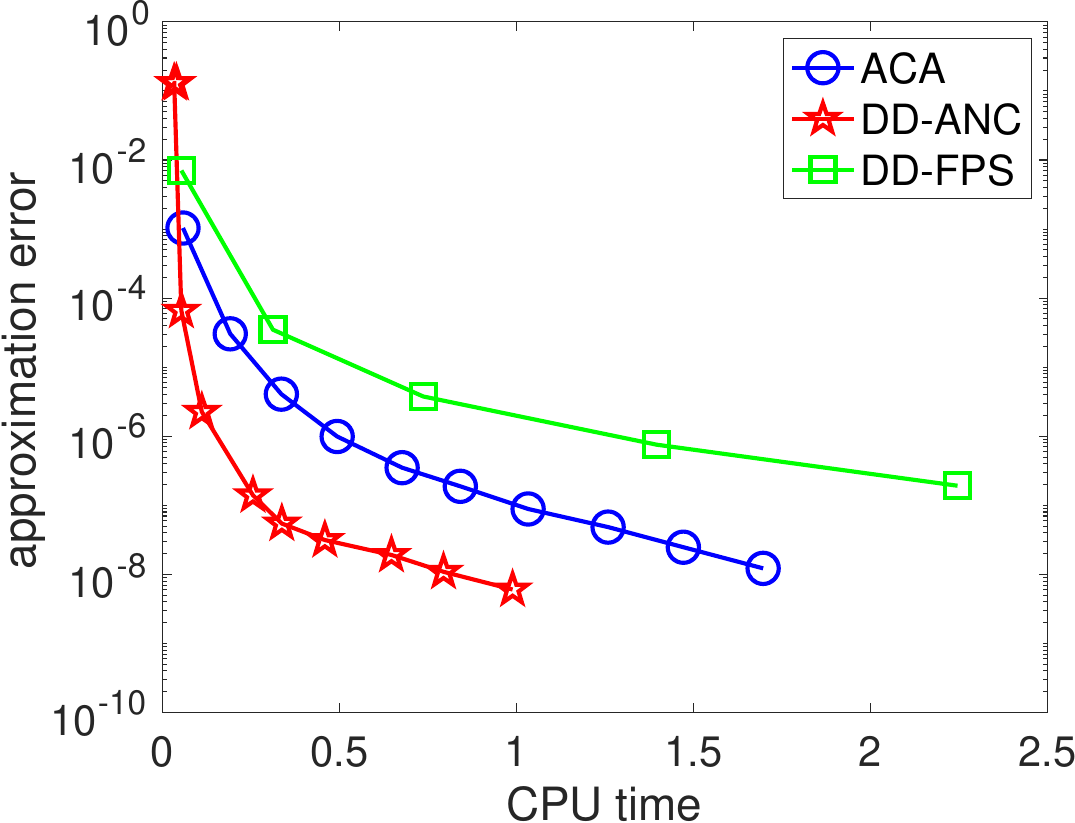}}
    \subfigure[$\sigma=\sigma_2\approx 153.76$]{\label{fig:d128sig2time}\includegraphics[scale=.26]{./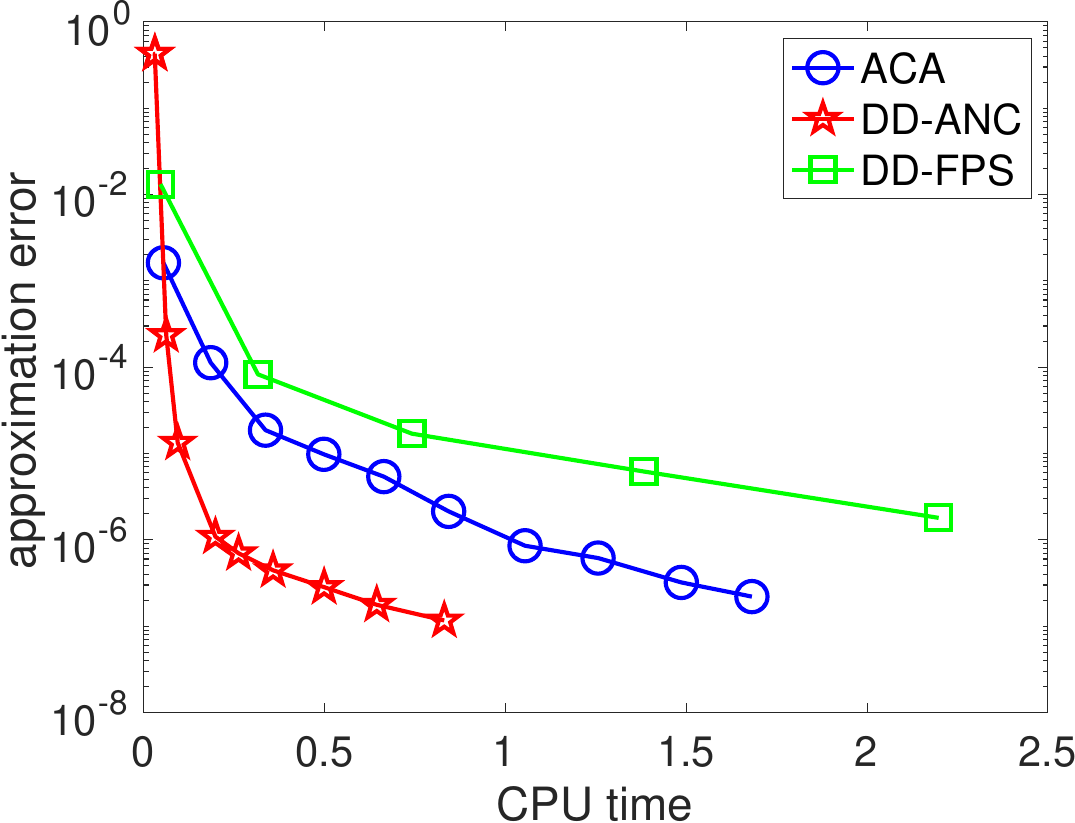}}
    \subfigure[$\sigma=\sigma_3\approx 61.50$]{\label{fig:d128sig3time}\includegraphics[scale=.26]{./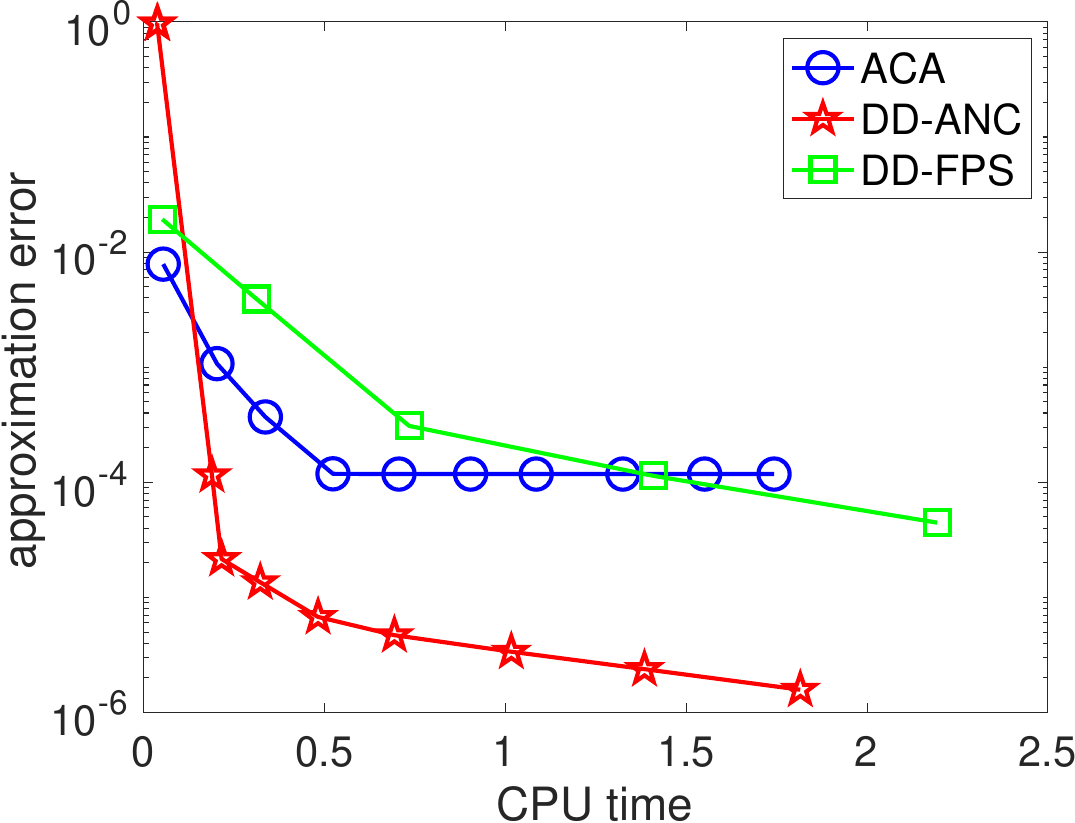}}
    \caption{Time comparison of one-sided data-driven factorizations (DD-ANC and DD-FPS) with ACA on the kernel matrices defined by the Gas Sensor dataset (d=128) and the Gaussian kernel with three different bandwidths $\sigma$. CPU timings (in seconds) are the average of 10 runs.}
    \label{fig:d128time} 
\end{figure}


\cdf{
\textbf{Test 5. Comparison of algebraically generated subsets and geometrically generated subsets.}
ACA \cite{bebendorf2000,bebendorf2003ACA} performs a column-pivoted partial LU decomposition where the pivots are chosen algebraically based on the residual of each rank-1 approximation in the sequential process.
The resulting triangular factorization (LU) is mathematically equivalent to $K_{XS_2}K_{S_1 S_2}^{-1}K_{S_1 Y}$ where $K_{S_1 S_2}$ is a square matrix and the subsets (corresponding to the pivots) $S_1$, $S_2$ are generated by ACA (cf. \cite[Lemma 3]{bebendorf2000}).
Hence we can use ACA to generate subsets for Algorithm \ref{alg:fac2} and Algorithm \ref{alg:fac1}.
Now, given the proposed geometric approach and the algebraic approach via ACA for generating subsets,
a natural question is which approach yields better performance in practice.
In this experiment, we investigate the quality of subsets by comparing the following methods:
(1) ACA; (2) one-sided factorization in Algorithm \ref{alg:fac1} with subset $S_2$ generated by ACA (`DD1-ACA'); (3) one-sided factorization with subset generated by anchor net (`DD1-ANC').
We use the Gas Sensor dataset ($d=128$) and
consider both symmetric positive definite (SPD) matrix and rectangular matrix.
The SPD matrix is the Gaussian kernel matrix $K_{XX}$ with $X$ containing 8000 random samples from the Gas Sensor dataset.
For the rectangular matrix $K_{XY}$, we choose $X$ and $Y$ to contain 8000 and 10000 samples, respectively, as in \textbf{Test 4}.
The bandwidth paragraph is chosen as $\sigma=\sigma_3\approx 61.50$.
The result is shown in Figure \ref{fig:DD1-ACA}.
\begin{figure}[htbp]
    \centering
    \subfigure[SPD Gaussian kernel matrix $K_{XX}$]{\includegraphics[scale=.3]{./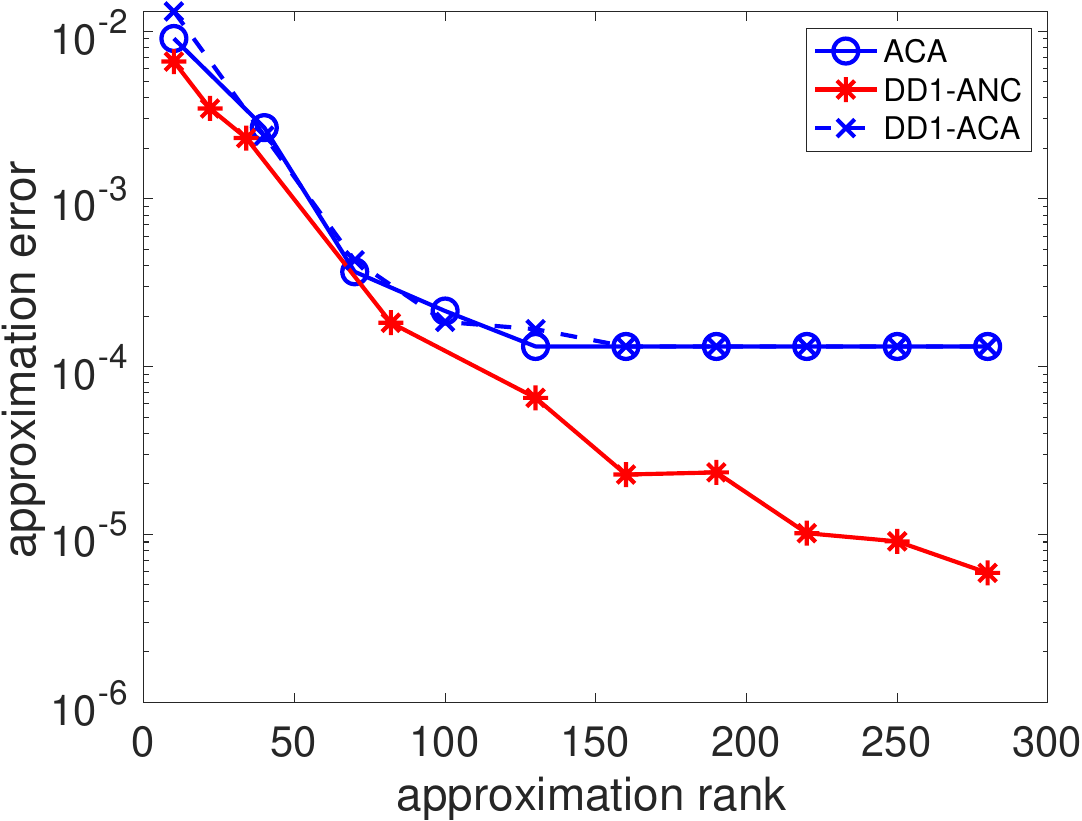}}
    \subfigure[Rectangular Gaussian kernel matrix $K_{XY}$]{\includegraphics[scale=.3]{./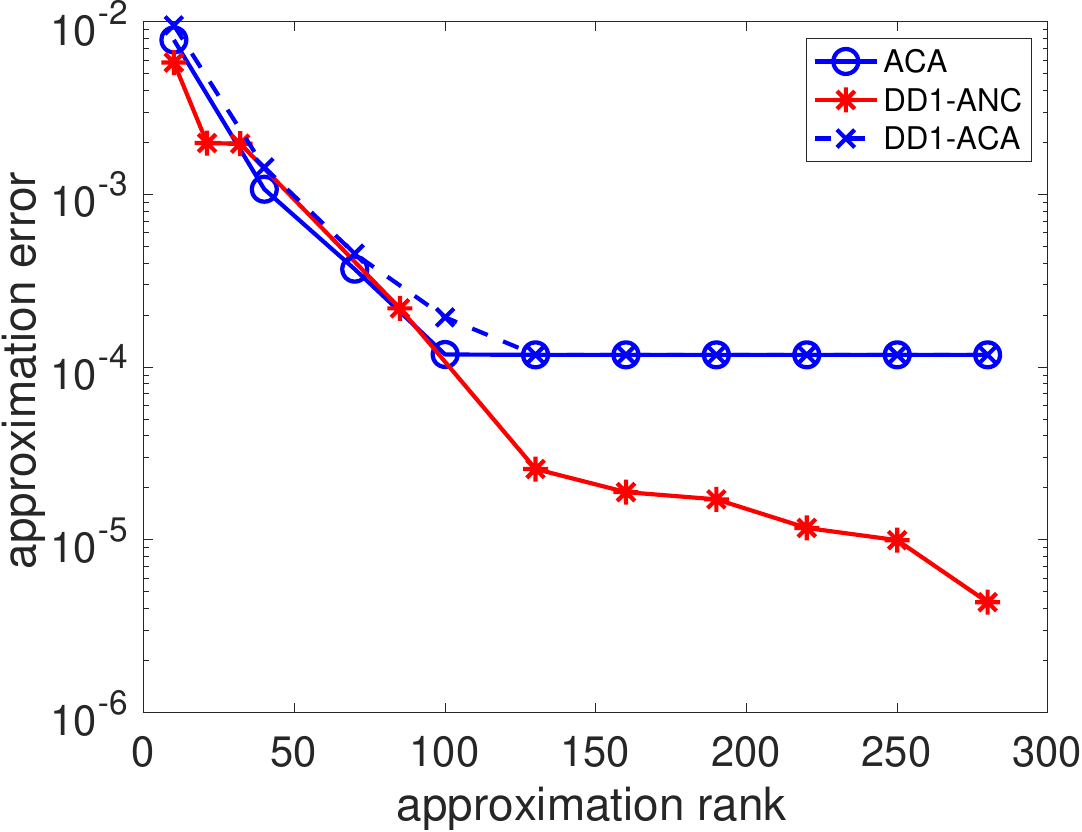}}
    \caption{Accuracy comparison of ACA, one-sided factorizations with ACA-generated subsets  (`DD1-ACA') and anchor net-generated subsets (`DD1-ANC') on the Gaussian kernel matrices with Gas Sensor dataset ($d=128$).}
    \label{fig:DD1-ACA}
\end{figure}

It is clearly seen from Figure \ref{fig:DD1-ACA} that, for this problem, ACA and `DD1-ACA' yield almost the same performance, indicating that the subset generated by ACA (or, algebraically, pivots) does \emph{not} yield an accurate low-rank approximation regardless of whether the underlying matrix is SPD.
The geometric method with anchor net, on the other hand, generates better subsets with more accurate low-rank approximations.
Considering the time efficiency demonstrated in Figure \ref{fig:d128time}, we see that the geometric approach gives overall better performance in terms of accuracy, speed, and robustness.

To provide a straightforward illustration of the subset selected by ACA, we performed an experiment in two dimensions ($X\subset \mathbb{R}^2$) for the Gaussian kernel $\kappa(x,y)=\exp(-|x-y|^2/0.25)$.
The dataset $X$ contains 400 points splitted into three clusters (left to right) with 100, 200, and 100 points respectively.
Thus the kernel matrix $K_{XX}$ is a 400-by-400 SPD matrix.
The dataset $X$ is shown as blue points in Figure \ref{fig:ACA-ANC} (b) or (c).
We compare the points generated by ACA and AnchorNet and show the corresponding low-rank approximation error measured by relative error in matrix 2-norm.
The result is shown in Figure \ref{fig:ACA-ANC}.
We see from Figure \ref{fig:ACA-ANC}(a) that when the rank is smaller than 30, ACA entirely fails to improve the approximation accuracy despite the rank increase.
To understand this from a geometric point of view, we show the scatter plots in (b) and (c) for the case when the rank equals 25. It is clearly seen that the points selected by ACA are ``locked'' in the first two clusters in $X$ with no point selected in the third cluster. 
This results in the stagnation of accuracy. 
Algebraically, this ``locking'' phenomenon is analogous to performing Gaussian elimination \emph{only} on the first two diagonal blocks of a matrix that is composed of three diagonal blocks of similar rank structure.
On the contrary, AnchorNet generates points from all three clusters in a more balanced fashion and achieves consistently better accuracy than ACA according to Figure \ref{fig:ACA-ANC} (a).

\begin{figure}[htbp]
    \centering
    \subfigure[Error-rank plot for approximating $K_{XX}$]{\includegraphics[scale=.3]{./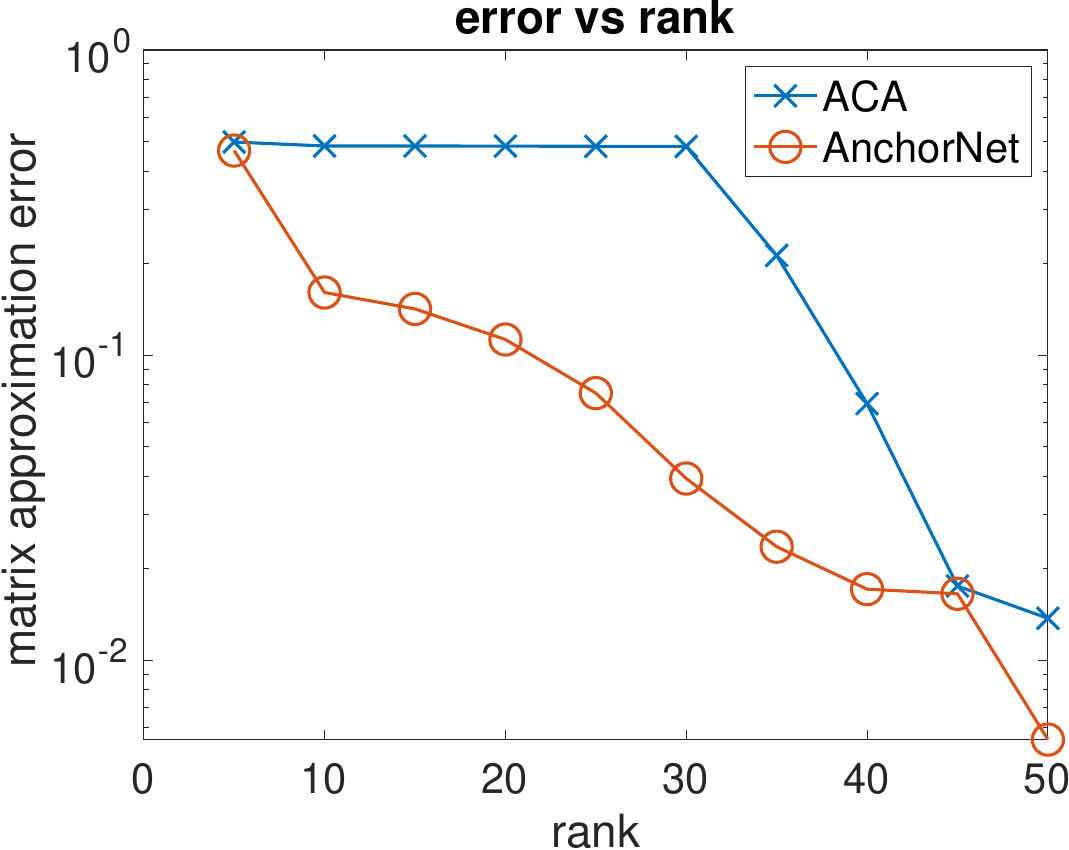}}
\hspace{0.1in}
    \subfigure[$X$ and 25 points selected by ACA (error=0.48)]{\includegraphics[scale=.3]{./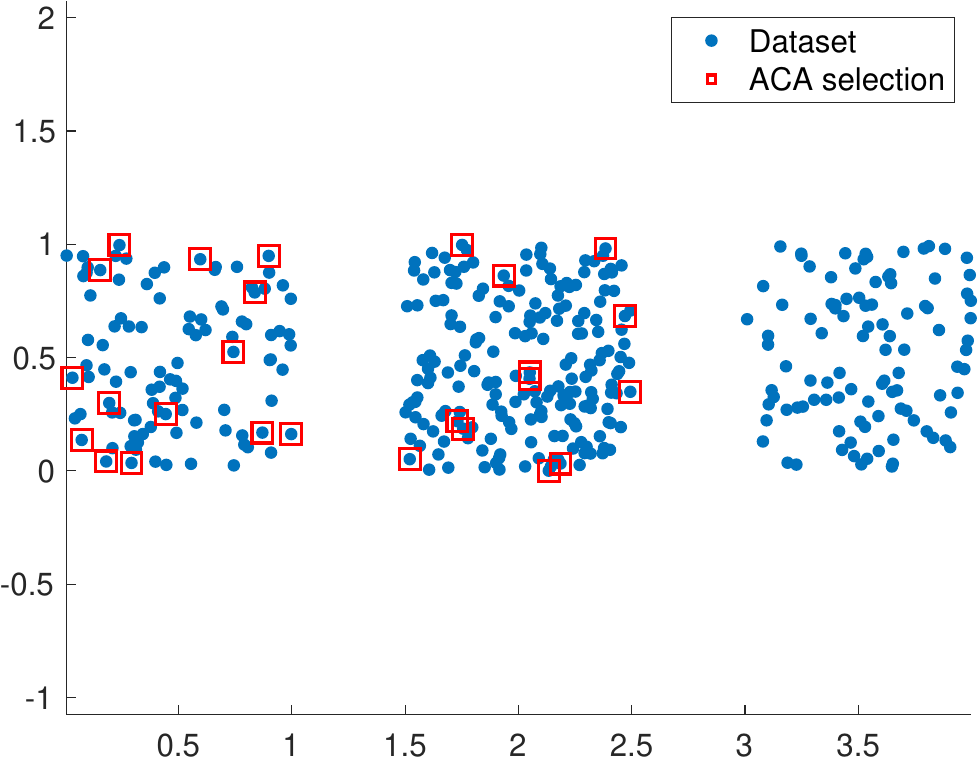}}
\hspace{0.1in}
    \subfigure[$X$ and 25 points selected by AnchorNet (error=0.07)]{\includegraphics[scale=.3]{./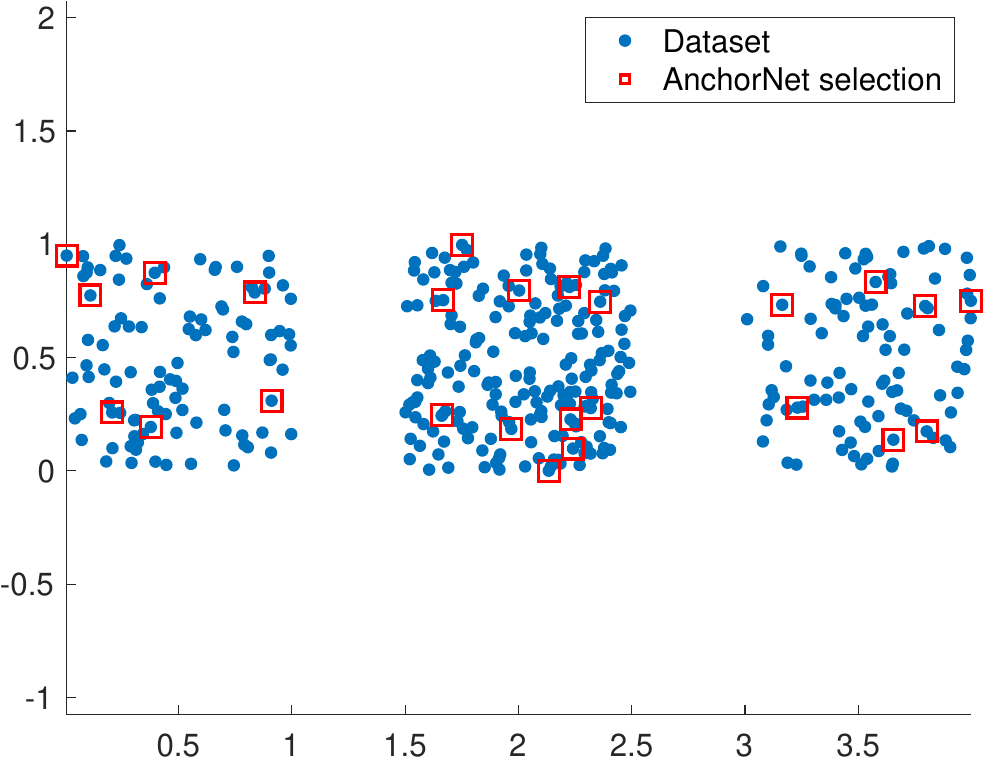}}
    \caption{\cdf{Comparison of ACA-based selection and AnchorNet-based selection.}}
    \label{fig:ACA-ANC}
\end{figure}
}

\subsection{Complexity test} 
\label{sub:Complexity}
In this section, \cdf{we perform experiments to investigate the complexity} of the proposed data-driven methods with respect to the size of the data.

\textbf{Test 6. Linear complexity with respect to data size.}
We consider approximating an $n$-by-$n$ kernel matrix $K_{XY}$ with increasing matrix size $n$ and dimension $d$.
We consider dimensions $d=3,10,50,100$ and generate synthetic data $X, Y$ in $\mathbb{R}^d$.
$X$ and $Y$ are randomly sampled from the uniform distribution over $[0,1]^d$ and $[2,3]^d$, respectively.
The kernel function is chosen as $\log |x-y|$.
We use the one-sided factorization in Algorithm \ref{alg:fac1} based on farthest point sampling.
In Figure \ref{fig:memory-time}, we report the peak memory use and timing for our method as $n$ increases.
The CPU time is computed as an average over ten repeated runs.
For all cases in Figure \ref{fig:memory-time}, the relative low-rank approximation error is around $2\times 10^{-4}$.

It is easily seen from Figure \ref{fig:memory-time} that, for each dimension $d$, the peak memory and timing both increase approximately linearly as a function of $n$, i.e. the number of points in $X$ or $Y$.

\begin{figure}[htbp] 
    \centering 
    \includegraphics[scale=.35]{./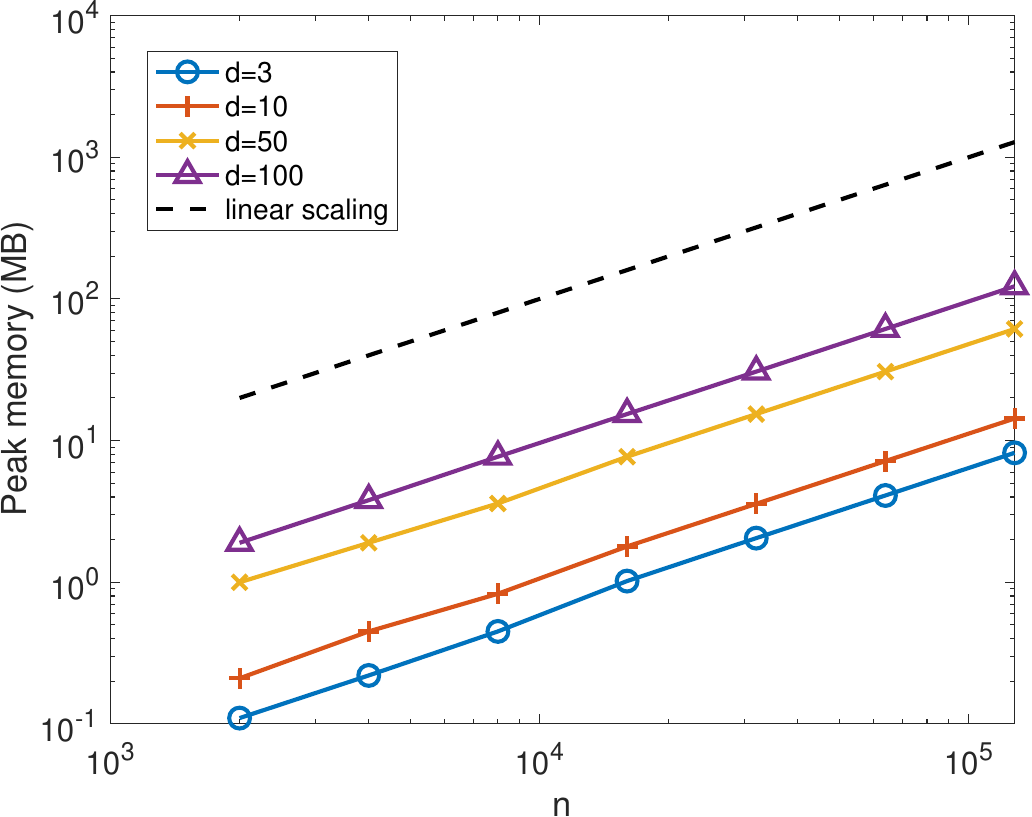} 
    \includegraphics[scale=.35]{./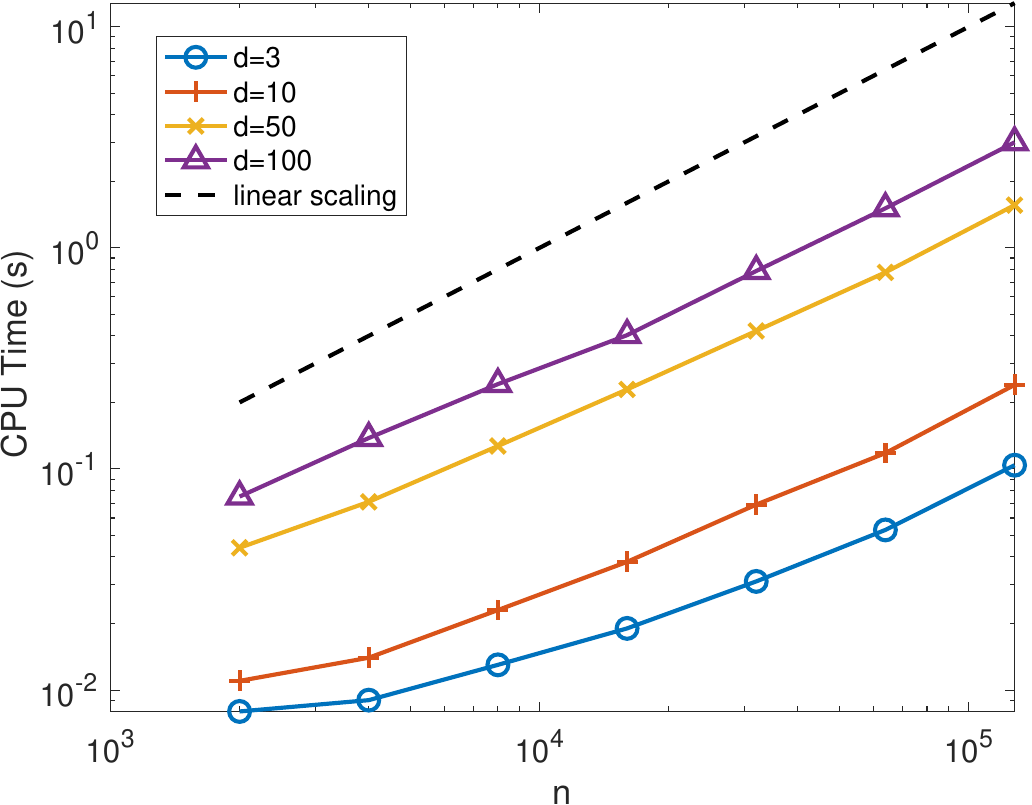} 
    \caption{Linear scaling tests of the data-driven factorization method on the kernel matrices defined by the synthetic data $X, Y$ sampled from the uniform distribution over $[0,1]^d$ and $[2,3]^d$, respectively, and the kernel function $\log |x-y|$. Left: peak memory use. Right: CPU time (average of 10 runs).} 
    \label{fig:memory-time} 
\end{figure}

\subsection{Kernel test} 
\label{sub:Kernel}
To show that the proposed data-driven approach can be applied to various kinds of kernel functions defined in high dimensions, we consider six different kernel functions in Table \ref{tab:kernel} and a dataset in 561 dimensions.
\cdf{The kernel $\kappa_5(x,y)$ in Table \ref{tab:kernel} is non-symmetric.}
We demonstrate the generality and accuracy of the data-driven approach by comparing to ACA. 

\textbf{Test 7. Approximating various types of kernel functions.}
We use a high dimensional dataset from the UCI Machine Learning Repository:
\emph{Smartphone Dataset for Human Activity Recognition in Ambient Assisted Living}\footnote{\url{https://archive.ics.uci.edu/ml/machine-learning-databases/00364/}}.
The training data contains $n=4252$ instances with $d=561$ attributes.
We choose $X$ to be the standardized data,
and define $Y = \frac{2R}{\sqrt{d}}+X$, where $R=\max\limits_{x\in X}\dist{x,0}$.
By construction, $X$ and $Y$ do not overlap and kernel functions in Table \ref{tab:kernel} are all well-defined over $X\times Y$.

The kernel functions $\kappa_1(x,y),\dots,\kappa_6(x,y)$ in the experiment are given in Table \ref{tab:kernel}.

In Table \ref{tab:2},
we report the relative approximation error
$||K-UV||/||K||$  with respect to the approximation rank $r$, which is equal to the number of columns of $U$.
Three methods are compared: ACA, the data-driven compression in Algorithm \ref{alg:fac1} with farthest point sampling (``DD-FPS'') or anchor net method (``DD-ANC'').

We see from Table \ref{tab:2} that DD-ANC achieves the best result for all kernels and ranks tested. 
For the same approximation rank $r$, the accuracy of DD-ANC is noticeably higher than that of DD-FPS and ACA.
DD-FPS outperforms ACA for almost all cases except $\kappa_4(x,y)$ when $r=10,\dots,170$.
Together with Test $4$ in Section \ref{sub:high},
the results show that the proposed fast data-driven approach is not only more robust, but also more accurate for the high dimensional dataset with general kernels.
Compared to existing methods, one advantage of the data-driven method is that, for the same dataset and fixed rank $r$, the geometric selection is performed only \emph{once} and can be used for different kernel functions or kernel function parameters.
This can hardly  be achieved by methods that require kernel function evaluation as the first step of the compression.
For example, for algebraic methods such as ICA (Incomplete Cross Approximation \cite{ICA2000}) and ACA, if the kernel function changes, the pivots need to be computed anew.
In Table \ref{tab:2}, for each $r$, ACA computes pivots six times for six kernels, while DD-FPS and DD-ANC each only select one subset, which is used for all six kernel functions.


\begin{table}
\caption{Kernels used for experiment in Table \ref{tab:2}.  In $\kappa_4$, the constant $c=\frac{0.8}{\max\limits_{x\in X, y\in Y}|x-y|^2}$, and in $\kappa_5$, $x_1$ denotes the first entry of the vector $x\in\mathbb{R}^d$}
\label{tab:kernel}
\begin{center}
\begin{tabular}{cccccc}
\toprule
$\kappa_1(x,y)$ & $\kappa_2(x,y)$ & $\kappa_3(x,y)$ & $\kappa_4(x,y)$ & $\kappa_5(x,y)$ & $\kappa_6(x,y)$ \\
\midrule 
$|x-y|$ & $\log|x-y|$ & $\left(1+\left|\frac{x-y}{R}\right|^2\right)^{-1}$ & $\exp\left(-\frac{1}{1-c|x-y|^2}\right)$ & $\dfrac{x_1}{|x-y|}$ & $x\cdot y+(x\cdot y)^2 + (x\cdot y)^3$\\
\bottomrule 
\end{tabular}
\end{center}
\end{table}

\begin{table}
\caption{Rank-$r$ approximation accuracy of the data-driven factorizations (DD-ANC and DD-FPS) and ACA on
the kernel matrices defined by the Smartphone Dataset ($d=561$) and six kernel functions shown in Table \ref{tab:kernel}.}
\label{tab:2}
\begin{center}
\begin{tabular}{ccccccccc}
\toprule
\midrule
$r$ & & 10 & 50 & 90 & 130 & 170 & 210 & 250 \\
\midrule
\multirow{3}{*}{\centering $\kappa_1(x,y)$} 
& ACA & 2.4E-3 & 4.8E-4 & 1.2E-4 & 8.2E-5 & 3.1E-5 & 1.7E-5 & 8.5E-6 \\
\cmidrule{2-9}
& DD-FPS & 1.3E-3 & 2.4E-4 & 1.4E-4 & 4.2E-5 & 3.5E-5 & 1.4E-5 & 4.7E-6 \\
\cmidrule{2-9}
& DD-ANC & 3.5E-4 & 9.0E-5 & 3.8E-5 & 1.9E-5 & 9.9E-6 & 4.6E-6 & 2.7E-6 \\
\midrule
\multirow{3}{*}{\centering $\kappa_2(x,y)$} 
& ACA & 2.2E-4 & 6.1E-5 & 4.8E-5 & 2.3E-5 & 6.7E-6 & 3.6E-6 & 2.7E-6 \\
\cmidrule{2-9}
& DD-FPS & 1.7E-4 & 4.5E-5 & 2.7E-5 & 7.5E-6 & 5.1E-6 & 2.4E-6 & 1.1E-6 \\
\cmidrule{2-9}
& DD-ANC & 5.9E-5 & 1.6E-5 & 6.4E-6 & 3.6E-5 & 1.9E-6 & 9.0E-7 & 5.5E-7 \\
\midrule
\multirow{3}{*}{\centering $\kappa_3(x,y)$} 
& ACA & 2.5E-3 & 9.3E-4 & 3.2E-4 & 1.8E-4 & 6.3E-5 & 4.3E-5 & 2.9E-5 \\
\cmidrule{2-9}
& DD-FPS & 5.2E-3 & 9.8E-4 & 2.5E-4 & 1.2E-4 & 6.8E-5 & 3.8E-5 & 2.0E-5 \\
\cmidrule{2-9}
& DD-ANC & 8.3E-4 & 1.3E-4 & 6.4E-5 & 3.9E-5 & 1.8E-5 & 9.9E-6 & 6.0E-6 \\
\midrule
\multirow{3}{*}{\centering $\kappa_4(x,y)$} 
& ACA & 1.2E-2 & 8.7E-4 & 3.3E-4 & 1.4E-4 & 7.1E-5 & 4.9E-5 & 4.8E-5 \\
\cmidrule{2-9}
& DD-FPS & 3.6E-2 & 2.5E-3 & 3.9E-4 & 1.7E-4 & 1.0E-4 & 4.6E-5 & 1.8E-5 \\
\cmidrule{2-9}
& DD-ANC & 1.8E-3 & 2.8E-4 & 1.2E-4 & 6.2E-5 & 3.5E-5 & 1.7E-5 & 9.0E-6 \\
\midrule
\multirow{3}{*}{\centering $\kappa_5(x,y)$} 
& ACA & 9.1E-4 & 1.7E-4 & 7.7E-5 & 5.7E-5 & 2.2E-5 & 1.1E-5 & 7.1E-6 \\
\cmidrule{2-9}
& DD-FPS & 4.0E-4 & 1.1E-4 & 4.9E-5 & 2.1E-5 & 1.1E-5 & 4.1E-6 & 2.1E-6 \\
\cmidrule{2-9}
& DD-ANC & 2.9E-4 & 5.7E-5 & 3.2E-5 & 1.3E-5 & 5.4E-6 & 2.0E-6 & 1.2E-6 \\
\midrule
\multirow{3}{*}{\centering $\kappa_6(x,y)$} 
& ACA & 1.3E-1 & 8.9E-3 & 5.1E-3 & 2.8E-3 & 2.2E-3 & 1.8E-3 & 1.7E-3 \\
\cmidrule{2-9}
& DD-FPS & 1.8E-2 & 3.7E-3 & 2.0E-3 & 1.2E-3 & 9.8E-4 & 7.6E-4 & 6.3E-4 \\
\cmidrule{2-9}
& DD-ANC & 1.2E-2 & 2.7E-3 & 1.1E-3 & 7.5E-4 & 6.8E-4 & 5.4E-4 & 4.5E-4 \\
\midrule
\bottomrule
\end{tabular}
\end{center}
\end{table}


\section{Conclusion}
\label{sec:conclusion}

For compressing low-rank kernel matrices where sets of points $X$ and $Y$
are available, it appears appealing to use subsets of $X$ and $Y$ that capture the geometry of $X$ and $Y$.  
This paper presented theoretical
justification and numerical tests that argue for choosing points such
that no original point in $X$ (or $Y$) is very far from a point chosen
for the subset.  If these subsets can be selected in linear time,
then the overall compression algorithm can be performed in linear time,
which is optimal for kernel matrices.  
We demonstrated effective low-rank compression for both low and high dimensional datasets using geometric selection based on farthest point sampling and
the anchor net method, which are both linear scaling.  
It is possible that even more sophisticated linear scaling schemes for selecting subsets can lead to even better approximation accuracy with the same number of selected points, especially in the high-dimensional case.

\bibliography{cdfeng}
\bibliographystyle{plain}
\end{document}